\newlength{\perspective}
\newcommand{\nospacepunct}[1]{\makebox[0pt][l]{\,#1}}
\newtheorem{theorem}[subsection]{Theorem}
\newtheorem{proposition}[subsection]{Proposition}
\newtheorem{lemma}[subsection]{Lemma}
\newtheorem{corollary}[subsection]{Corollary}
\theoremstyle{definition}
\newtheorem{definition}[subsection]{Definition}
\newtheorem{example}[subsection]{Example}
\newtheorem{discussion}[subsection]{}
\theoremstyle{remark}
\newtheorem{remark}[subsection]{Remark}
\newtheorem*{ack}{Acknowledgements}
\numberwithin{equation}{subsection}
\theoremstyle{plain}
\newcounter{intro}
\newtheorem{introthm}[intro]{Theorem}
\crefname{introthm}{Theorem}{Theorems}
\crefname{example}{Example}{Examples}
\renewcommand{\epsilon}{\varepsilon}
\renewcommand{\phi}{\varphi}
\renewcommand{\theta}{\vartheta}
\renewcommand{\mod}[1]{\operatorname{mod}(#1)}
\newcommand{\Mod}[1]{\operatorname{Mod}(#1)}
\newcommand{\Bimod}[2]{\operatorname{Bimod}(#1,#2)}
\newcommand{\Stmod}[1]{\underline{\operatorname{Mod}}(#1)}
\newcommand{\Ch}[1]{\operatorname{Ch}(#1)}
\newcommand{\dcat}[2][]{{\rm D}_{#1}(#2)}
\newcommand{\dbcat}[1]{\dcat[b]{#1}}
\newcommand{\Perf}[1]{\operatorname{Perf}(#1)}
\newcommand{\BZ}{\mathbb{Z}}
\newcommand{\bmalpha}{\bm{\alpha}}
\newcommand{\cat}[1]{{\mathcal{#1}}}
\DeclareMathOperator{\cone}{cone}
\DeclareMathOperator{\cyl}{cyl}
\DeclareMathOperator{\ctr}{Z}
\newcommand{\End}[3][*]{\operatorname{End}^{#1}_{#2}(#3)}
\newcommand{\env}[2][]{{#2}^e_{#1}}
\newcommand{\Ext}[4][*]{\operatorname{Ext}^{#1}_{#2}(#3,#4)}
\newcommand{\HH}[2][*]{\operatorname{HH}^{#1}(#2)}
\DeclareMathOperator{\Ho}{Ho}
\renewcommand{\hom}[3][]{\operatorname{hom}_{#1}(#2,#3)}
\newcommand{\homl}[3][]{\operatorname{hom}^\ell_{#1}(#2,#3)}
\newcommand{\homr}[3][]{\operatorname{hom}^r_{#1}(#2,#3)}
\DeclareMathOperator{\id}{id}
\newcommand{\Kos}{\operatorname{Kos}}
\newcommand{\kosobj}[2]{#1/\!\!/#2}
\newcommand{\level}[3]{\operatorname{level}_{#1}^{#2}(#3)}
\newcommand{\lotimes}{\otimes^{\mathbf{L}}}
\newcommand{\op}[1]{#1^{\rm{op}}}
\DeclareMathOperator{\projdim}{projdim}
\newcommand{\RHom}[4][]{\operatorname{RHom}^{#1}_{#2}(#3,#4)}
\newcommand{\Rhoml}[3][]{\operatorname{Rhom}^\ell_{#1}(#2,#3)}
\newcommand{\Rhomr}[3][]{\operatorname{Rhom}^r_{#1}(#2,#3)}
\newcommand{\set}[2]{\left\{#1 \,\middle|\, #2\right\}}
\newcommand{\susp}{\Sigma}
\newcommand{\thick}[3][]{\operatorname{thick}^{#1}_{#2}(#3)}
\newcommand{\unit}{\mathbbm{1}}
\title[Generation time for biexact functors and Koszul objects]{Generation time for biexact functors and Koszul objects in triangulated categories}
\date{\today}
\keywords{Biexact functor, triangulated category, Verdier structure, Koszul object, cofibration category}
\subjclass[2020]{18G80 (primary), 55U35}
\author[J.~C.~Letz]{Janina C. Letz}
\address{Janina~C.~Letz,
Faculty of Mathematics,
Bielefeld University,
PO Box 100 131,
33501 Bielefeld,
Germany}
\email{jletz@math.uni-bielefeld.de}
\author[M.~Stephan]{Marc Stephan}
\address{Marc Stephan,
Faculty of Mathematics,
Bielefeld University,
PO Box 100 131,
33501 Bielefeld,
Germany}
\email{marc.stephan@math.uni-bielefeld.de}
\begin{document}

\begin{abstract}
This paper concerns the generation time that measures the number of cones necessary to obtain an object in a triangulated category from another object. This invariant is called level. We establish level inequalities for enhanced triangulated categories: One inequality concerns biexact functors of topological triangulated categories, another Koszul objects. In particular, this extends inequalities for the derived tensor product from commutative algebra to enhanced tensor triangulated categories. We include many examples.
\end{abstract}

\maketitle


\section{Introduction}

Triangulated categories appear in many areas, such as algebraic topology, representation theory or commutative algebra. One approach to understand a triangulated category is to study how objects generate each other. An object $X$ generates an object $Y$, if $Y$ can be obtained from $X$ by taking cones, suspensions and retracts. 

Following \cite{Avramov/Buchweitz/Iyengar/Miller:2010} we call the generation time \emph{level}. Loosely speaking the $X$-level measures the number of cones required to build objects from $X$. It generalizes some well-known invariants: When $\cat{T} = \dcat{\Mod{R}}$ the derived category of modules over a ring $R$, then the $R$-level of a module coincides with its projective dimension $+1$; see for example \cite{Christensen:1998}. For a local ring $R$ with residue field $k$, the $k$-level of a module coincides with its Loewy length. The $k$-level of a perfect complex $F$ has been used in \cite{Avramov/Buchweitz/Iyengar/Miller:2010} to establish a rank inequality for the homology of $F$.

Further, level is closely connected to the Rouquier dimension of a triangulated category introduced in \cite{Rouquier:2008}. For triangulated categories with finite Rouquier dimension, there are some Brown representability theorems; see \cite{Bondal/VanDenBergh:2003,Rouquier:2008,Letz:2023}. For many examples of triangulated categories it is known whether the Rouquier dimension is finite or infinite, but in the former case the exact value is rarely known. We expect that studying level will help computing the Rouquier dimension.

Besides the properties following from the definition \cite[Lemma~2.4]{Avramov/Buchweitz/Iyengar/Miller:2010}, not much is known about the behavior of level. Many estimates for level are rather rough. For example the transitivity of finite building yields a product inequality for level. In this paper we give a refinement for biexact functors. The following inequality is optimal in that equality can be achieved.

\begin{introthm} \label{intro:biexact_level}
Let $\mathsf{F} \colon \cat{S} \times \cat{T} \to \cat{U}$ be a biexact functor of topological triangulated categories. Then 
\begin{equation*}
\level{\cat{U}}{\mathsf{F}(X,X')}{\mathsf{F}(Y,Y')} \leq \level{\cat{S}}{X}{Y} + \level{\cat{T}}{X'}{Y'} - 1
\end{equation*}
for $X,Y \in \cat{S}$ and $X',Y' \in \cat{T}$.
\end{introthm}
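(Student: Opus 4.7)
The plan is to model $Y$ and $Y'$ by explicit towers with atomic cofibres, assemble these into a bifiltration of $\mathsf{F}(\tilde Y, \tilde Y')$ using the enhancement provided by the topological triangulated structure, and extract a diagonal sub-tower of length $m + n - 1$. Throughout set $m = \level{\cat{S}}{X}{Y}$ and $n = \level{\cat{T}}{X'}{Y'}$.

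First I would unpack level. By definition there exist objects $\tilde Y \in \cat{S}$ and $\tilde Y' \in \cat{T}$ admitting towers
\[ 0 = Y_0 \to Y_1 \to \cdots \to Y_m = \tilde Y \qquad \text{and} \qquad 0 = Y'_0 \to Y'_1 \to \cdots \to Y'_n = \tilde Y' \]
whose consecutive cofibres $P_i$ and $P'_j$ lie in $\thick[1]{\cat{S}}{X}$ and $\thick[1]{\cat{T}}{X'}$ respectively, and such that $Y$ is a retract of $\tilde Y$ and $Y'$ a retract of $\tilde Y'$. Since $\mathsf{F}$ preserves retracts and retracts do not increase level, it suffices to bound the $\mathsf{F}(X, X')$-level of $\mathsf{F}(\tilde Y, \tilde Y')$.

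The heart of the argument is to lift the two towers to the topological enhancement of $\cat{S}$ and $\cat{T}$ and use biexactness of $\mathsf{F}$ at that enhanced level to build a bifiltered object with vertices $\mathsf{F}(Y_i, Y'_j)$ over $\{0, \ldots, m\} \times \{0, \ldots, n\}$. Biexactness forces every interior square of this grid to be cocartesian with corner cofibre $\mathsf{F}(P_i, P'_j)$, which lies in $\thick[1]{\cat{U}}{\mathsf{F}(X, X')}$ because $\mathsf{F}$ is additive in each variable and commutes with suspensions. Passing to the diagonal filtration, with $D_k$ generated by those vertices satisfying $i + j \leq k$, yields a tower
\[ 0 = D_1 \to D_2 \to \cdots \to D_{m+n} \simeq \mathsf{F}(\tilde Y, \tilde Y') \]
whose consecutive cofibres are identified via biexactness with
\[ D_k / D_{k-1} \simeq \bigoplus_{\substack{i + j = k \\ 1 \leq i \leq m,\, 1 \leq j \leq n}} \mathsf{F}(P_i, P'_j), \]
each lying in $\thick[1]{\cat{U}}{\mathsf{F}(X, X')}$. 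This tower has $m + n - 1$ nontrivial stages, giving the desired inequality $\level{\cat{U}}{\mathsf{F}(X,X')}{\mathsf{F}(\tilde Y, \tilde Y')} \leq m + n - 1$.

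The main obstacle is the bifiltration construction itself. At the level of triangulated categories alone cones are not functorial, so one cannot coherently iterate cone formations in two variables, and the two-variable cofibre sequences required by the argument do not automatically exist; indeed, a naive induction on $m + n$ using the one-variable triangle subadditivity of level leads to the strictly weaker bound $\leq 2m + n - 2$. This is where the assumption that $\cat{S}, \cat{T}, \cat{U}$ are topological triangulated categories and that $\mathsf{F}$ is biexact at the enhanced level pays off: in the enhancement one can form genuine pushouts in two variables and verify that biexactness identifies the corner cofibres as $\mathsf{F}(P_i, P'_j)$. Once the bifiltration is in hand, the diagonal and subquotient analysis is a direct computation.
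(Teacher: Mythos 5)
Your proposal follows essentially the same route as the paper: build a bifiltration of $\mathsf{F}(\tilde Y, \tilde Y')$ indexed by the two towers, pass to the diagonal filtration, and identify the successive cofibres with $\bigoplus_{i+j=k}\mathsf{F}(P_i,P'_j)$; the paper packages the needed compatibilities inside the triangulated category via the strong Verdier structure and iterated homotopy pushouts rather than lifting to the enhancement explicitly. One phrase in your outline is off, though: the interior squares with corners $\mathsf{F}(Y_{i-1},Y'_{j-1})$, $\mathsf{F}(Y_i,Y'_{j-1})$, $\mathsf{F}(Y_{i-1},Y'_j)$, $\mathsf{F}(Y_i,Y'_j)$ are \emph{not} cocartesian (if they were, the total cofibre would vanish and the diagonal subquotient computation would come out differently); the correct statement, and what biexactness at the enhanced level actually gives, is that the comparison map from the homotopy pushout $\mathsf{F}(Y_{i-1},Y'_j)+_{\mathsf{F}(Y_{i-1},Y'_{j-1})}\mathsf{F}(Y_i,Y'_{j-1})$ to $\mathsf{F}(Y_i,Y'_j)$ has cofibre $\mathsf{F}(P_i,P'_j)$. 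With that fix your argument and the paper's coincide.
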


For this result see \cref{biexact_level} and \cref{cofib_biexact_verdier}. \cref{intro:biexact_level} extends and generalizes \cite[Lemma~2.4]{Avramov/Iyengar/Neeman:2018} where the inequality is shown for $\cat{S} = \cat{T} = \cat{U} = \dcat{\Mod{R}}$ the derived category of a commutative ring $R$, $\mathsf{F}$ the derived tensor product and $X = X' = R$. In \cref{sec:examples} we provide many examples of biexact functors and triangulated categories for which the above inequality holds. These include the tensor product on the derived category of a commutative ring or a group ring, on the stable module category of a cocommutative Hopf algebra, and on various categories of spectra.

In the proof of \cref{intro:biexact_level} we construct the triangles building $\mathsf{F}(Y,Y')$ by taking homotopy pushouts. In a general triangulated category one does not have enough control over the homotopy pushout and its compatibilities, since it is neither a honest pushout nor captures the full homotopical information. For this reason we work with biexact functors that admit a strong Verdier structure; see \cite[Theorem~3.5]{Keller/Neeman:2002} and \cite[Definition~3.31]{Aldrovandi/Lester:2023}. This extra structure ensures that the homotopy pushout in $3 \times 3$ diagrams induced by applying the biexact functor to exact triangles in each component has the desired compatibilities. 

In practice, many biexact functors of triangulated categories admit a strong Verdier structure. We show that whenever a triangulated category has an enhancement and the biexact functor respects the enhancement, the biexact functor admits a strong Verdier structure. For monoidal products this was shown by \cite{May:2001,Groth/Ponto/Shulman:2014,Groth/Stovicek:2018}. We consider general bifunctors. Explicitly, we prove that for any biexact functor of stable cofibration categories the induced bifunctor on the homotopy categories is biexact and admits a strong Verdier structure; see \cref{cofib_biexact_verdier}. Stable cofibration categories provide a convenient setting, since their homotopy categories are precisely the topological triangulated categories \cite{Schwede:2013}, and the homotopy theory of cofibration categories is equivalent to the homotopy theory of finitely cocomplete $\infty$-categories \cite{Szumilo:2017}.

We review cofibration categories in \cref{sec:cofib} and combine arguments of May and Schwede to establish the strong Verdier structure for biexact functors induced by biexact functors on stable cofibration categories in \cref{sec:cofib_biexact}. A further application is a new proof that the homotopy category of any symmetric monoidal stable model category is tensor triangulated. Moreover, we show that in addition to the derived monoidal product, the derived internal hom functor admits a strong Verdier structure as well; see \cref{htpy_category_of_monoidal_modelcat}.

An essential class of biexact functors on triangulated categories is the class of actions of a tensor triangulated category $(\cat{S},\otimes,\unit)$ on a triangulated category $\cat{T}$. If the action is induced by a biexact functor of stable cofibration categories, we say that the action is \emph{topological}. Any graded endomorphism of the unit $\unit$ in $\cat{S}$ induces a natural transformation on $\cat{T}$, which is compatible with the suspension. In fact, this yields a ring homomorphism from the graded endomorphism ring of $\unit$ in $\cat{S}$ to the center of $\cat{T}$. 

For a sequence of elements in the center, one can define a Koszul object. This generalizes the Koszul complex over a ring. For Koszul objects we show the following level inequality. Again, the bound can be achieved.

\begin{introthm} \label{intro:kosobj_level}
Let $\alpha_1, \ldots, \alpha_c$ be elements in the center of a topological triangulated category $\cat{T}$ that are induced by a topological action on $\cat{T}$. Then
\begin{equation*}
\level{\cat{T}}{X}{\kosobj{X}{(\alpha_1, \ldots, \alpha_c)}} \leq c + 1
\end{equation*}
for any $X \in \cat{T}$.
\end{introthm}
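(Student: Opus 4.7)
The plan is to induct on $c$, with the inductive step bootstrapped by applying \cref{intro:biexact_level} to the topological action functor. Write the action as a biexact functor $\odot \colon \cat{S} \times \cat{T} \to \cat{T}$ and pick, for each central element $\alpha_i$ on $\cat{T}$, a graded endomorphism $\tilde\alpha_i \colon \Sigma^{|\alpha_i|} \unit \to \unit$ of the unit in $\cat{S}$ inducing it via $\odot$. The base case $c = 0$ is trivial, and $c = 1$ follows immediately from the defining triangle $\Sigma^{|\alpha_1|}X \xrightarrow{\alpha_1} X \to \kosobj{X}{\alpha_1}$, which witnesses level at most $2$.

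For the inductive step, set $Y = \kosobj{X}{(\alpha_1, \ldots, \alpha_{c-1})}$, so that $\level{\cat{T}}{X}{Y} \leq c$ by hypothesis. The crucial identification is
\[
\kosobj{X}{(\alpha_1, \ldots, \alpha_c)} \;\cong\; \kosobj{\unit}{\tilde\alpha_c} \odot Y.
\]
I would prove this by noting that naturality of the central element together with the unit isomorphism $\unit \odot Y \cong Y$ identifies the induced map $\alpha_{c,Y} \colon \Sigma^{|\alpha_c|} Y \to Y$ with $\tilde\alpha_c \odot \id_Y$, so biexactness of $\odot$ propagates the defining triangle $\Sigma^{|\alpha_c|}\unit \to \unit \to \kosobj{\unit}{\tilde\alpha_c}$ through $- \odot Y$ to an exact triangle in $\cat{T}$ whose third term is the cone of $\alpha_{c,Y}$, hence $\kosobj{X}{(\alpha_1, \ldots, \alpha_c)}$ by definition. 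Applying \cref{intro:biexact_level} to $\odot$ at source $(\unit, X)$ and target $(\kosobj{\unit}{\tilde\alpha_c}, Y)$ then gives
\[
\level{\cat{T}}{X}{\kosobj{X}{(\alpha_1, \ldots, \alpha_c)}} \leq \level{\cat{S}}{\unit}{\kosobj{\unit}{\tilde\alpha_c}} + \level{\cat{T}}{X}{Y} - 1 \leq 2 + c - 1 = c + 1.
\]

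The main obstacle lies in the cone identification of the second paragraph: a naive induction that only applies the standard cone-splitting bound to the triangle $\Sigma^{|\alpha_c|}Y \to Y \to \kosobj{X}{(\alpha_1, \ldots, \alpha_c)}$ yields $\level{\cat{T}}{X}{\kosobj{X}{(\alpha_1, \ldots, \alpha_c)}} \leq 2c$, since both endpoints already have $X$-level $\leq c$; this is far too weak. The biexact-functor argument instead effectively replaces $\Sigma^{|\alpha_c|} Y$ by $\Sigma^{|\alpha_c|}\unit \odot Y$, exploiting that $\Sigma^{|\alpha_c|}\unit$ has $\unit$-level $1$ in $\cat{S}$ regardless of $|\alpha_c|$. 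The delicate point is to ensure that the identification $\alpha_{c,Y} = \tilde\alpha_c \odot \id_Y$ and the resulting triangle in $\cat{T}$ are canonical and not merely up to unspecified isomorphism of cones; this is precisely where the hypothesis that $\alpha_c$ is induced by a \emph{topological} action, and hence that $\odot$ carries a strong Verdier structure from \cref{cofib_biexact_verdier}, is used.
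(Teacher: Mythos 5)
Your argument is essentially the paper's: identify $X \cong \odot(\unit, X)$ and $\kosobj{Y}{\alpha_c} \cong \odot(\cone(\tilde{\alpha}_c), Y)$ with $Y = \kosobj{X}{(\alpha_1,\dots,\alpha_{c-1})}$, then apply \cref{intro:biexact_level} together with $\level{\cat{S}}{\unit}{\cone(\tilde{\alpha}_c)} \leq 2$, which is exactly the paper's proof of the (slightly more general) single-step bound $\level{\cat{T}}{X}{\kosobj{Y}{\alpha}} \leq \level{\cat{T}}{X}{Y} + 1$. One correction to your closing remark: the cone identification is automatic from biexactness, the unit isomorphism, and the fact that level is invariant under isomorphism, so no canonicity is needed there; the strong Verdier structure (hence topologicality of the action) is used only inside the proof of \cref{intro:biexact_level}, where it controls the compatibilities of the iterated homotopy pushouts.
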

This result is contained in \cref{kos_obj_level} combined with \cref{cofib_biexact_verdier}. Koszul objects have been used to obtain bounds of Rouquier dimension in \cite{Bergh/Iyengar/Krause/Oppermann:2010}, and they are connected to support \cite[Section~5]{Benson/Iyengar/Krause:2008}. 

\begin{ack}
We thank Jan {\v{S}}{\v{t}}ov{\'\i}{\v{c}}ek for helpful discussions.

The authors were partly supported by the Deutsche Forschungsgemeinschaft (SFB-TRR 358/1 2023 - 491392403). Letz was also partly supported by the Alexander von Humboldt Foundation in the framework of a Feodor Lynen research fellowship endowed by the German Federal Ministry of Education and Research. Part of this work was done at the Hausdorff Research Institute for Mathematics, Bonn, when the authors were at the ``Spectral Methods in Algebra, Geometry, and Topology" trimester, funded by the Deutsche Forschungsgemeinschaft under Germany's Excellence Strategy--EXC-2047/1--390685813. 
\end{ack}

\section{Homotopy cartesian squares}

We fix a triangulated category $\cat{T}$ with suspension functor $\susp$. We consider $\op{\cat{T}}$ as a triangulated category in which $X \xrightarrow{\op{f}} Y \xrightarrow{\op{g}} Z \xrightarrow{\op{h}} \susp^{-1} X$ is an exact triangle in $\op{\cat{T}}$ if and only if $\susp^{-1}X \xrightarrow{-h} Z \xrightarrow{-g} Y \xrightarrow{-f} X$ is an exact triangle in $\cat{T}$. While this convention is not significant in this section, it ensures that the internal hom functor of a closed tensor triangulated category is biexact in \cref{monoidal_cofib_cat}.

\begin{discussion} \label{htpy_cartesian}
Following \cite[Definition~1.4.1]{Neeman:2001}, we call a commutative square
\begin{equation} \label{eq:square}
\begin{tikzcd}
T \ar[r,"g"] \ar[d,"f" swap] & V \ar[d,"f'"] \\
U \ar[r,"g'" swap] & X
\end{tikzcd}
\end{equation}
\emph{homotopy cartesian}, if there is an exact triangle
\begin{equation*}
T \xrightarrow{\left(\begin{smallmatrix}f \\ -g\end{smallmatrix}\right)} U \oplus V \xrightarrow{\left(\begin{smallmatrix}g' & f'\end{smallmatrix}\right)} X \xrightarrow{\partial} \susp T .
\end{equation*}
We say $\partial$ is a \emph{connecting morphism} of the homotopy cartesian square; other sources use the term \emph{differential}. 

The homotopy cartesian property is symmetric in that the square \cref{eq:square} is homotopy cartesian if and only if its reflection
\begin{equation*}
\begin{tikzcd}
T \ar[r,"f"] \ar[d,"g" swap] & U \ar[d,"g'"] \\
V \ar[r,"f'" swap] & X
\end{tikzcd}
\end{equation*}
is homotopy cartesian. However, if $\partial$ a connecting morphism of the homotopy cartesian square \cref{eq:square}, then $-\partial$ is a connecting morphism of the reflected square.

Further, the homotopy cartesian property is self-dual; that is, a square \cref{eq:square} is homotopy cartesian in $\cat{T}$ if and only if its dual is homotopy cartesian in $\op{\cat{T}}$. 
\end{discussion}

Homotopy cartesian squares are strongly connected to morphisms of triangles in which one morphism is the identity morphism. 

\begin{discussion} \label{htpy_cart_mor_triangle}
Given a morphism of exact triangles
\begin{equation*}
\begin{tikzcd}
T \ar[r] \ar[d] & V \ar[r] \ar[d] & Z \ar[r] \ar[d,"="] & \susp T \ar[d] \\
U \ar[r] & X \ar[r] & Z \ar[r] & \susp U
\end{tikzcd}
\end{equation*}
where the first square is homotopy cartesian, a connecting morphism is given by the composite
\begin{equation*}
\partial = (X \to Z \to \susp T) .
\end{equation*}

If either the morphism $X \to Z$ or $Z \to \susp T$ is not given, it can be constructed such that the diagram is a morphism of triangles; see \cite[Lemma~1.4.4]{Neeman:2001}. Similarly, if either the morphism $V \to X$ or $T \to U$ is not given, it can be constructed such that the diagram commutes and the first square is homotopy cartesian; see \cite[Lemma~1.4.3]{Neeman:2001}.
\end{discussion}

While homotopy cartesian squares are in general neither pullback nor pushout squares, they still satisfy a weakened pasting property.

\begin{discussion} \label{combiningsquares}
Consider two commutative squares and their composition:
\begin{center}
\begin{subequations}
\noindent\begin{minipage}{.33\linewidth}
\begin{equation} \label{eq:left_square}
\begin{tikzcd}
T \ar[r,"g"] \ar[d,"f" swap] & V \ar[d,"f'"] \\
U \ar[r,"g'" swap] & X
\end{tikzcd}
\end{equation}
\end{minipage}%
\begin{minipage}{.33\linewidth}
\begin{equation} \label{eq:right_square}
\begin{tikzcd}
V \ar[r,"h"] \ar[d,"f'" swap] & Y \ar[d,"f''"] \\
X \ar[r,"h'" swap] & Z
\end{tikzcd}
\end{equation}
\end{minipage}%
\begin{minipage}{.33\linewidth}
\begin{equation} \label{eq:outside_square}
\begin{tikzcd}
T \ar[r,"hg"] \ar[d,"f" swap] & Y \ar[d,"f''"] \\
U \ar[r,"h'g'" swap] & Z
\end{tikzcd}
\end{equation}
\end{minipage}%
\end{subequations}
\end{center}
\begin{enumerate}
\item \label{combiningsquares:outside} If \cref{eq:left_square,eq:right_square} are homotopy cartesian, then \cref{eq:outside_square} is homotopy cartesian; see \cite[Proposition~6.11]{Christensen/Frankland:2022}.
\item \label{combiningsquares:right} If \cref{eq:left_square,eq:outside_square} are homotopy cartesian, then there exists a morphism $X \xrightarrow{\tilde{h}'} Z$, such that the square
\begin{equation*}
\begin{tikzcd}
V \ar[d,"f'"] \ar[r,"h"] & Y \ar[d,"f''"] \\
X \ar[r,"\tilde{h}'" swap] & Z
\end{tikzcd}
\end{equation*}
is homotopy cartesian, and $h' g' = \tilde{h}' g'$; see \cite[Lemma~9]{Saorin/Zimmermann:2019}. 
\item \label{combiningsquares:left} If \cref{eq:right_square,eq:outside_square} are homotopy cartesian, then there exists a morphism $T \xrightarrow{\tilde{g}} V$ such that the square
\begin{equation*}
\begin{tikzcd}
T \ar[r,"\tilde{g}"] \ar[d,"f" swap] & V \ar[d,"f'"] \\
U \ar[r,"g'" swap] & X
\end{tikzcd}
\end{equation*}
is homotopy cartesian, and $hg = h \tilde{g}$; this is (2) in $\op{\cat{T}}$. 
\end{enumerate}

If the squares \cref{eq:left_square,eq:right_square,eq:outside_square} are homotopy cartesian, then there exist connecting morphisms $\partial_1$, $\partial_2$ and $\partial_3$, respectively, such that
\begin{equation*}
\partial_3 h' = \partial_1 \quad\text{and}\quad (\susp f) \partial_3 = \partial_2 .
\end{equation*}
\end{discussion}

\subsection{Homotopy pushouts} \label{htpy_pushout}

If \cref{eq:square} is homotopy cartesian, we say $X$ is the \emph{homotopy pushout} of the span $U \gets T \to V$, and we write $X = U +_T V$. A homotopy pushout is equipped with morphisms $U \to U +_T V$ and $V \to U +_T V$. Just as the cone of a morphism, the homotopy pushout is unique up to non-canonical isomorphism; in fact we have $U +_T 0 = \cone(T \to U)$. 

We emphasize that a homotopy pushout is typically not a pushout. In fact given morphisms $U \to Z$ and $V \to Z$ that coincide after pre-composition with $T \to U$ and $T \to V$, respectively, with each other, there exists a \emph{non-unique} morphism $U +_T V \to Z$. Later we investigate situations in which there exists a morphism $U +_T V \to Z$ with a cone compatible with a given $3 \times 3$ diagram containing the square with $T$, $U$, $V$ and $Z$. We say such situations ``admit a (strong) Verdier structure''; see \cref{verdier_structure}. 

\begin{discussion} \label{expandSum}
We take the homotopy pushout of spans $U \gets S \to V$ and $V \gets T \to W$, and obtain the commutative diagram
\begin{equation*}
\begin{tikzcd}
& T \ar[r] \ar[d] & W \ar[d] \\
S \ar[r] \ar[d] & V \ar[r] \ar[d] & V +_T W \nospacepunct{.} \\
U \ar[r] & U +_S V
\end{tikzcd}
\end{equation*}
In this diagram we complete the square in the lower right corner to a homotopy cartesian square. Then all rectangles, in particular the horizontal and vertical one, are homotopy cartesian by \cref{combiningsquares}, and we obtain
\begin{equation*}
(U +_S V) +_T W = (U +_S V) +_V (V +_T W) = U +_S (V +_T W) .
\end{equation*}
That is, the construction of the homotopy pushout is associative.
\end{discussion}

\begin{lemma} \label{htpy_pushout_same_base}
For any diagram $X \leftarrow U \leftarrow S \rightarrow V \rightarrow Y$ there exists a morphism $U +_S V \to X +_S Y$ such that its cone is $\cone(U \to X) \oplus \cone(V \to Y)$. 
\end{lemma}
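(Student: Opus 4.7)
The plan is to realize $U +_S V$ and $X +_S Y$ as third terms of exact triangles and to obtain $w$ as the induced map in a morphism of these triangles. Writing $u, v, x, y$ for the four given morphisms, \cref{htpy_cartesian} supplies the defining exact triangles
\begin{equation*}
S \xrightarrow{\left(\begin{smallmatrix}u\\-v\end{smallmatrix}\right)} U \oplus V \to U +_S V \to \susp S
\qquad\text{and}\qquad
S \xrightarrow{\left(\begin{smallmatrix}xu\\-yv\end{smallmatrix}\right)} X \oplus Y \to X +_S Y \to \susp S\,.
\end{equation*}
The diagonal morphism $\alpha \colonequals x \oplus y \colon U \oplus V \to X \oplus Y$ satisfies $\alpha \circ \left(\begin{smallmatrix}u\\-v\end{smallmatrix}\right) = \left(\begin{smallmatrix}xu\\-yv\end{smallmatrix}\right)$, so $(\id_S, \alpha)$ commutes with the leftmost arrows of these two triangles. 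By (TR3), this pair extends to a morphism of triangles whose rightmost vertical arrow is a morphism $w \colon U +_S V \to X +_S Y$.

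Next I would invoke the $3 \times 3$ lemma, a standard consequence of the octahedral axiom: one may choose $w$ so that the cones of $\id_S$, $\alpha$ and $w$ themselves fit into an exact triangle. Since $\cone(\id_S) = 0$, this degenerates to an isomorphism $\cone(\alpha) \cong \cone(w)$. Because $\alpha$ is the diagonal of a biproduct, its cone splits as $\cone(\alpha) \cong \cone(x) \oplus \cone(y) = \cone(U \to X) \oplus \cone(V \to Y)$, which is exactly the desired cone of $w$.

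The main subtlety is the second step: not every $w$ produced from (TR3) has the correct cone, so the ``good morphism of triangles'' argument that realizes the cones as the third term of an exact triangle cannot be skipped. That said, this is a feature of any triangulated category and no enhancement of $\cat{T}$ is required for this lemma, which is why it belongs in this preliminary section before the Verdier structures of later sections enter the picture.
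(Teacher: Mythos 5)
Your argument is correct and follows the same route as the paper: both start from the defining exact triangles over $S$, map them via $\id_S$ and the diagonal $x \oplus y$, produce the desired morphism by completing to a morphism of triangles, and then identify its cone with $\cone(x\oplus y) \cong \cone(x)\oplus\cone(y)$ because the chosen completion can be made "good." The only cosmetic difference is that the paper invokes \cref{htpy_cart_mor_triangle} (Neeman's Lemma~1.4.3, phrased in terms of homotopy cartesian squares) rather than the $3\times 3$ lemma, but these encode the same underlying fact, and you correctly flag that a generic (TR3) choice of the filler need not have the right cone.
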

\begin{proof}
We consider the commutative diagram
\begin{equation*}
\begin{tikzcd}
S \ar[d,"="] \ar[r] & U \oplus V \ar[d] \ar[r] & U +_S V \ar[d,dashed] \ar[r] & \susp S \ar[d,"="] \\
S \ar[r] & X \oplus Y \ar[r] & X +_S Y \ar[r] & \susp S
\end{tikzcd}
\end{equation*}
in which the rows are exact triangles. By \cref{htpy_cart_mor_triangle}, the dashed arrow exists such that the diagram is a morphism of exact triangles and the second square is homotopy cartesian. Then the dashed arrow is the desired morphism and, by \cref{htpy_cart_mor_triangle}, it has the desired cone. 
\end{proof}

Alternatively, we can change the base of the span. 

\begin{lemma} \label{htpy_pushout_map_base}
Given $S \to T$ and a span $X \leftarrow T \rightarrow Y$, then there exists a morphism $X +_S Y \to X +_T Y$ such that its cone is $\susp \cone(S \to T)$.
\end{lemma}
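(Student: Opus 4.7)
The plan is to realize both homotopy pushouts as cones of maps into $X \oplus Y$ and then invoke the octahedral axiom on a two-step composition. First I would set up the two defining triangles
\[
S \to X \oplus Y \to X +_S Y \to \susp S
\qquad\text{and}\qquad
T \to X \oplus Y \to X +_T Y \to \susp T,
\]
where the map $T \to X \oplus Y$ is the anti-diagonal $\bigl(\begin{smallmatrix} f \\ -g \end{smallmatrix}\bigr)$ coming from the given span $X \xleftarrow{f} T \xrightarrow{g} Y$, and the map $S \to X \oplus Y$ is the anti-diagonal associated to the composed span $X \leftarrow S \to Y$ obtained by precomposing $f$ and $g$ with $S \to T$.

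Next I would observe that, by construction, these two anti-diagonal maps fit into a commutative triangle $S \to T \to X \oplus Y$ whose composite is the $S$-level anti-diagonal. Applying the octahedral axiom to this composable pair produces an exact triangle
\[
\cone(S \to T) \to X +_S Y \to X +_T Y \to \susp \cone(S \to T),
\]
where the outer terms are identified using the two defining triangles above. The middle arrow is the desired morphism $X +_S Y \to X +_T Y$, and rotating the triangle one reads off that its cone is $\susp \cone(S \to T)$.

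There is no real obstacle here; the only thing that needs to be checked is the compatibility of the two anti-diagonal maps with the morphism $S \to T$ (so that the octahedral axiom applies in the desired form), which is immediate since the span $X \leftarrow S \to Y$ is by definition the pullback of $X \leftarrow T \to Y$ along $S \to T$. As an alternative one could mimic the strategy of \cref{htpy_pushout_same_base}, using \cref{htpy_cart_mor_triangle} to fill in a morphism of the two defining triangles along the identity on $X \oplus Y$ and then compute the cone of the fill-in via the $3 \times 3$-lemma, but the octahedral approach is more direct.
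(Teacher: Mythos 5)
Your proof is correct. The paper takes the route you label as the ``alternative'': it fits the two defining triangles of $X+_S Y$ and $X+_T Y$ into a morphism of exact triangles along the identity on $X \oplus Y$, uses \cref{htpy_cart_mor_triangle} to construct the fill-in $X+_S Y \to X+_T Y$ so that the square with corners $X+_S Y$, $\susp S$, $X+_T Y$, $\susp T$ is homotopy cartesian, and then reads off the cone of the fill-in from that homotopy-cartesian square (parallel arrows in a homotopy cartesian square have isomorphic cones, again by \cref{htpy_cart_mor_triangle}; no $3\times3$-lemma is needed). Your octahedral argument, applied to the factorization $S \to T \to X\oplus Y$ of the anti-diagonal, is equally valid and in a sense more direct, since it produces the desired triangle $\cone(S\to T) \to X+_S Y \to X+_T Y \to \susp\cone(S\to T)$ in one step. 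What the paper's phrasing buys is uniformity with the proof of \cref{htpy_pushout_same_base} and with the homotopy-cartesian machinery that Section~2 is built around, which the rest of the paper reuses. One cosmetic remark: the span $X\leftarrow S\to Y$ is obtained from $X\leftarrow T\to Y$ by precomposition with $S\to T$, so ``pullback'' is not quite the right word, but the compatibility of the anti-diagonal maps that you need for the octahedral axiom is indeed immediate.
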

\begin{proof}
We consider the commutative diagram
\begin{equation*}
\begin{tikzcd}
S \ar[d] \ar[r] & X \oplus Y \ar[d,"="] \ar[r] & X +_S Y \ar[d,dashed] \ar[r] & \susp S \ar[d] \\
T \ar[r] & X \oplus Y \ar[r] & X +_T Y \ar[r] & \susp T
\end{tikzcd}
\end{equation*}
in which the rows are exact triangles. By \cref{htpy_cart_mor_triangle}, the dashed arrow exists such that the diagram is a morphism of exact triangles and the third square is homotopy cartesian. Then the dashed arrow is the desired morphism and, by \cref{htpy_cart_mor_triangle}, it has the desired cone. 
\end{proof}

\begin{remark} \label{construct_comp_mor}
Given a commutative diagram
\begin{equation*}
\begin{tikzcd}
T \ar[r] \ar[d] & V \ar[d] \\
U \ar[r] & X
\end{tikzcd}
\end{equation*}
there exists a morphism $U +_T V \to X$ such that the compositions $U \to U +_T V \to X$ and $V \to U +_T V \to X$ recover the given morphisms. Using \cref{htpy_cart_mor_triangle} we obtain homotopy cartesian squares
\begin{equation*}
\begin{tikzcd}
U +_T V \ar[r] \ar[d] & \cone(T \to V) \ar[d,dashed] \\
X \ar[r] & \cone(U \to X)
\end{tikzcd}
\quad \text{and} \quad 
\begin{tikzcd}
U +_T V \ar[r] \ar[d] & \cone(T \to U) \ar[d,dashed] \\
X \ar[r] & \cone(V \to X) \nospacepunct{.}
\end{tikzcd}
\end{equation*}
In particular, this yields that the dashed vertical arrows have the same cone by \cref{htpy_cart_mor_triangle}. 

In general, morphisms $\cone(T \to V) \to \cone(U \to X)$ and $\cone(T \to U) \to \cone(V \to X)$ that are compatible with the commutative square need not have the same cone. In the next section we discuss a setting in which there is a natural choice for these morphisms, and there exists a morphism $U +_T V \to X$ that is compatible with this natural choice.
\end{remark}

\section{Biexact functors between triangulated categories}

In this section we show the level inequality for a biexact functor $\mathsf{F}$. If we apply the biexact functor $\mathsf{F}$ to a triangle in each component, we obtain a $3 \times 3$ diagram in which each row and each column is an exact triangle. In general, this is not enough to be able to construct a compatible morphism as discussed in \cref{construct_comp_mor}. This will be resolved by the notion of a strong Verdier structure.

\begin{discussion} \label{biexact}
Recall that an exact functor $\mathsf{F} \colon \cat{S} \to \cat{T}$ of triangulated categories is equipped with a natural isomorphism $\tau \colon \mathsf{F} \susp \to \susp \mathsf{F}$ such that for any exact triangle $X \xrightarrow{f} Y \xrightarrow{g} Z \xrightarrow{h} \susp X$ in $\cat{S}$ the triangle
\begin{equation*}
\mathsf{F}(X) \xrightarrow{\mathsf{F}(f)} \mathsf{F}(Y) \xrightarrow{\mathsf{F}(g)} \mathsf{F}(Z) \xrightarrow{\tau \mathsf{F}(h)} \susp \mathsf{F}(X)
\end{equation*}
is exact in $\cat{T}$. 

We call a bifunctor $\mathsf{F} \colon \cat{S}\times\cat{T}\to \cat{U}$ of triangulated categories \emph{biexact}, if it is equipped with natural isomorphisms
\begin{equation*}
\theta \colon \mathsf{F}(\susp -,-) \to \susp \mathsf{F}(-,-) \quad \text{and} \quad \zeta \colon \mathsf{F}(-,\susp -) \to \susp \mathsf{F}(-,-)
\end{equation*}
such that
\begin{enumerate}
\item for any $X \in \cat{S}$ the functor $\mathsf{F}(X,-)$ with $\zeta(X,-)$ is exact,
\item for any $X' \in \cat{T}$ the functor $\mathsf{F}(-,X')$ with $\theta(-,X')$ is exact, and
\item the following square anti-commutes
\begin{equation} \label{biexact:anticommute}
\begin{tikzcd}
\mathsf{F}(\susp X,\susp X') \ar[r, "\theta"] \ar[d, "\zeta" swap] \ar[dr,phantom,"(-1)"] & \susp \mathsf{F}(X,\susp X') \ar[d, "\susp\zeta"] \\
\susp \mathsf{F}(\susp X,X') \ar[r, "\susp \theta" swap] & \susp^2 \mathsf{F}(X,X') \nospacepunct{.}
\end{tikzcd}
\end{equation}
\end{enumerate}
In \cite[Definition~10.3.6]{Kashiwara/Schapira:2006} such a functor is called a triangulated bifunctor. 
\end{discussion}

\begin{discussion} \label{verdier_structure}
Following \cite[Theorem~3.5]{Keller/Neeman:2002} and \cite[Definition~3.31]{Aldrovandi/Lester:2023}, we say a biexact functor $\mathsf{F} \colon \cat{S}\times\cat{T}\to \cat{U}$ \emph{admits a Verdier structure}, if for all exact triangles
\begin{equation} \label{biexact:setup}
X \xrightarrow{f} Y \xrightarrow{g} Z \xrightarrow{h} \susp X \quad\text{and}\quad X' \xrightarrow{f'} Y' \xrightarrow{g'} Z' \xrightarrow{h'} \susp X'
\end{equation}
in $\cat{S}$ and $\cat{T}$, respectively, there exists an object $W \in \cat{U}$ and exact triangles
\begin{equation} \label{biexact:exact-triangles}
\begin{tikzcd}[row sep=0,column sep=large]
\mathsf{F}(X,Y') \ar[r,"j"] & W \ar[r,"q"] & \mathsf{F}(Z,X') \ar[r,dotted,"{\theta \mathsf{F}(h,f')}"] & \susp \mathsf{F}(X,Y') \\
W \ar[r,"i"] & \mathsf{F}(Y,Y') \ar[r,dotted,"{\mathsf{F}(g,g')}"] & \mathsf{F}(Z,Z') \ar[r,"p"] & \susp W \\
\mathsf{F}(Y,X') \ar[r,"j'"] & W \ar[r,"q'"] & \mathsf{F}(X,Z') \ar[r,dotted,"\zeta {\mathsf{F}(f,h')}"] & \susp \mathsf{F}(Y,X') \nospacepunct{,}
\end{tikzcd}
\end{equation}
such that the following diagram commutes
\begin{equation}\label{eq:bifunctorVerdier}
\begin{tikzcd}[column sep=-0.5em,row sep=0]
\mathsf{F}(X,X') \ar[rrr] \ar[ddd] \ar[ddrr,phantom,"\textrm{(I)}"] &&& \mathsf{F}(Y,X') \ar[rrr,"\textrm{(iv)}" swap] \ar[ddd,"\textrm{(iii)}"] \ar[ddl] \ar[dr] &&& \mathsf{F}(Z,X') \ar[rrr] \ar[ddd] \ar[dddrrr,dotted] &&& \susp \mathsf{F}(X,X') \ar[ddd] \\[+0.5em]
&&& & W \ar[urr] \ar[ddl] \ar[ddrr,phantom,"\textrm{(II)}"] \\
&& W \ar[dr] \\[+0.5em]
\mathsf{F}(X,Y') \ar[rrr,"\textrm{(ii)}" swap] \ar[ddd,"\textrm{(i)}"] \ar[urr] \ar[dr] &&& \mathsf{F}(Y,Y') \ar[rrr] \ar[ddd] \ar[dddrrr,dotted] &&& \mathsf{F}(Z,Y') \ar[rrr] \ar[ddd] \ar[ddrr,phantom,"\textrm{(IV)}"] &&& \susp \mathsf{F}(X,Y') \ar[ddd] \ar[ddl] \\[+0.5em]
& W \ar[urr] \ar[ddl] \ar[ddrr,phantom,"\textrm{(III)}"] \\
&&& &&& && \susp W \ar[dr] \\[+0.5em]
\mathsf{F}(X,Z') \ar[rrr] \ar[ddd] \ar[dddrrr,dotted] &&& \mathsf{F}(Y,Z') \ar[rrr] \ar[ddd] \ar[ddrr,phantom,"\textrm{(V)}"] &&& \mathsf{F}(Z,Z') \ar[rrr,"\textrm{(vi)}" swap] \ar[ddd,"\textrm{(v)}"] \ar[urr] \ar[dr] \ar[ddl] &&& \susp \mathsf{F}(X,Z') \ar[ddd] \\[+0.5em]
&&& &&& & \susp W \ar[urr] \ar[ddl] \ar[ddrr,phantom,"\textrm{(VI)}",shift left=2em] \\
&&& && \susp W \ar[dr] & && (-1) \\[+0.5em]
\susp \mathsf{F}(X,X') \ar[rrr] &&& \susp \mathsf{F}(Y,X') \ar[rrr] \ar[urr] &&& \susp \mathsf{F}(Z,X') \ar[rrr] &&& \susp^2 \mathsf{F}(X,X') \nospacepunct{.}
\end{tikzcd}
\end{equation}
In the diagram all rows and columns are the exact triangles obtained by applying $\mathsf{F}$ to \cref{biexact:exact-triangles}. The anti-commutativity of the bottom right square is due to \cref{biexact:anticommute}. The morphisms involving $W$ are those that appear in the triangles \cref{biexact:exact-triangles}, or the suspension of those; this means for example that the morphism $\susp W \to \susp \mathsf{F}(X,Z')$ is the morphism $\susp q'$. 

Abridged, the commutativity of \cref{eq:bifunctorVerdier} means the squares (I--VI) and the triangles (i--vi) commute.

Further, we say $\mathsf{F}$ admits a \emph{strong} Verdier structure, if the squares (I--VI) are homotopy cartesian; for (VI) we mean it is homotopy cartesian after we replace any one of the morphisms by its negative. 

If $\mathsf{F}$ admits a strong Verdier structure, then the connecting morphisms are given by the appropriate compositions of morphisms in \cref{biexact:exact-triangles}; cf.\@ \cref{htpy_cart_mor_triangle}. 
\end{discussion}

\begin{remark}
If a biexact functor $\mathsf{F}$ admits a (strong) Verdier structure, then the diagram \cref{eq:bifunctorVerdier} can be extended in any direction by rotation. In particular, a biexact functor $\mathsf{F}$ admits a (strong) Verdier structure if and only if $\mathsf{F}(\susp -,-)$, or equivalently $\mathsf{F}(-,\susp -)$, does. 

A bifunctor $\mathsf{F} \colon \cat{S}\times\cat{T}\to \cat{U}$ of triangulated categories is biexact if and only if its opposite $\op{\mathsf{F}} \colon \op{\cat{S}}\times\op{\cat{T}}\to \op{\cat{U}}$ is biexact. Moreover, a biexact functor $\mathsf{F}$ admits a (strong) Verdier structure if and only if $\op{\mathsf{F}}$ does.
\end{remark}

The symmetric monoidal product of a tensor triangulated category is a special case of a biexact functor. In this situation \cref{verdier_structure} is (TC3) of \cite{May:2001}. May provided an informal argument for establishing this condition if the tensor triangulated category comes from a symmetric monoidal, stable model category and checked it for certain families of such model categories. In \cite{Groth/Ponto/Shulman:2014} the same condition was shown for tensor triangulated categories induced by a stable monoidal derivator. In \cref{sec:cofib_biexact} we show that any biexact functor between stable cofibration categories induces a biexact functor admitting a strong Verdier structure on homotopy categories. In particular, this includes many bifunctors on algebraic triangulated categories; for examples see \cref{bifunctor_bimodules}.

\begin{remark}
For any biexact functor $\mathsf{F} \colon \cat{S} \times \cat{T} \to \cat{U}$ and exact triangle $X \to Y \to Z \to \susp X$ in $\cat{S}$, any morphism $X' \to Y'$ in $\cat{T}$ induces a morphism of exact triangles in $\cat{U}$. This morphism of exact triangles is middling good in the sense of \cite[Definition~2.4]{Neeman:1991}. Moreover, if $\mathsf{F}$ admits a Verdier structure, then the morphism of exact triangles is Verdier good in the sense of \cite[Definition~3.1]{Christensen/Frankland:2022}.
\end{remark}

Before we establish the level inequality for a biexact functor, we recall the definition of level.

\begin{discussion}
For an object $X$ of $\cat{T}$, we denote by $\thick[0]{\cat{T}}{X}$ the full subcategory of zero objects. We denote the smallest full subcategory that contains $X$ and is closed under (de)suspension, finite coproducts, and retracts by $\thick[1]{\cat{T}}{X}$. For $n \geq 2$ we inductively set
\begin{equation*}
\thick[n]{\cat{T}}{X} \coloneqq \set{Y \in \cat{T}}{\begin{gathered} \text{there is an exact triangle} \\ Y' \to Y \oplus \tilde{Y} \to Y'' \to \susp Y' \\ \text{with } Y' \in \thick[1]{\cat{T}}{X} \text{ and } Y'' \in \thick[n-1]{\cat{T}}{X}\end{gathered}} .
\end{equation*}
The subcategories give an exhaustive filtration of the smallest thick subcategory containing $X$. The construction of these subcategories is robust: In the definition we may assume that $Y' \in \thick[i]{\cat{T}}{X}$ and $Y'' \in \thick[j]{\cat{T}}{X}$ for any $i+j = n$. Moreover, if $Y \in \thick[n]{\cat{T}}{X}$, then it is enough to take a retract in the last step.

The subcategories $\thick[n]{\cat{T}}{X}$ were first introduced in \cite[2.2]{Bondal/VanDenBergh:2003} where they were denoted by $\langle X \rangle_n$. With the notation $\thick[n]{\cat{T}}{X}$ we follow \cite[2.2]{Avramov/Buchweitz/Iyengar/Miller:2010}.
\end{discussion}

Let $X, Y \in \cat{T}$. The \emph{$X$-level of $Y$} is
\begin{equation*}
\level{\cat{T}}{X}{Y} \coloneqq \inf\set{n \geq 0}{Y \in \thick[n]{\cat{T}}{X}} ;
\end{equation*}
see \cite[2.3]{Avramov/Buchweitz/Iyengar/Miller:2010}. 

\begin{theorem} \label{biexact_level}
Let $\mathsf{F} \colon \cat{S} \times \cat{T} \to \cat{U}$ be a biexact functor that admits a strong Verdier structure. Then
\begin{equation*}
\level{\cat{U}}{\mathsf{F}(X,X')}{\mathsf{F}(Y,Y')} \leq \level{\cat{S}}{X}{Y} + \level{\cat{T}}{X'}{Y'} - 1
\end{equation*}
for any $X, Y$ in $\cat{S}$ and $X', Y'$ in $\cat{T}$. 
\end{theorem}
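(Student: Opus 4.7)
The plan is to prove the inequality by induction on $m + n$, where $m = \level{\cat{S}}{X}{Y}$ and $n = \level{\cat{T}}{X'}{Y'}$, using the strong Verdier structure of $\mathsf{F}$ in the inductive step. (I assume $m, n \geq 1$; if either is $0$, then $Y$ or $Y'$ is zero and $\mathsf{F}(Y, Y') = 0$ by biexactness, so the inequality is trivial in the non-degenerate cases.) For the base case $m = 1$ (with $n = 1$ symmetric), $Y \in \thick[1]{\cat{S}}{X}$ is a retract of a finite coproduct of shifts of $X$, so biexactness in the first variable gives $\mathsf{F}(Y, Y') \in \thick[1]{\cat{U}}{\mathsf{F}(X, Y')}$. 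Since $\mathsf{F}(X, -)$ is exact, it sends $\thick[n]{\cat{T}}{X'}$ into $\thick[n]{\cat{U}}{\mathsf{F}(X, X')}$, and the standard inclusion $\thick[1]{\cat{U}}{A} \subseteq \thick[n]{\cat{U}}{\mathsf{F}(X, X')}$ for any $A$ of level $n$ yields $\mathsf{F}(Y, Y') \in \thick[n]{\cat{U}}{\mathsf{F}(X, X')}$, matching $1 + n - 1 = n$.

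For the inductive step with $m, n \geq 2$, I would choose triangles witnessing the levels: $A \to \bar{Y} \to B \to \susp A$ with $A \in \thick[1]{\cat{S}}{X}$, $B \in \thick[m-1]{\cat{S}}{X}$, and $Y$ a retract of $\bar{Y}$; and analogously $A' \to \bar{Y}' \to B' \to \susp A'$ for $Y'$. Biexactness implies $\mathsf{F}(Y, Y')$ is a retract of $\mathsf{F}(\bar{Y}, \bar{Y}')$, so it suffices to bound the latter. Applying the strong Verdier structure to this pair of triangles produces the object $W$ together with the three exact triangles involving $W$, as well as the six homotopy cartesian squares (I)--(VI) of \cref{verdier_structure}. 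By the inductive hypothesis, the intermediate pieces $\mathsf{F}(A, \bar{Y}')$, $\mathsf{F}(\bar{Y}, A')$, $\mathsf{F}(B, A')$, $\mathsf{F}(A, B')$, and $\mathsf{F}(B, B')$ all have level bounded by the appropriate sum $i + j - 1$, since each corresponds to a pair whose total level is strictly less than $m + n$.

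The main obstacle---and the heart of the argument---is to combine these bounds with the triangle $W \to \mathsf{F}(\bar{Y}, \bar{Y}') \to \mathsf{F}(B, B') \to \susp W$ to obtain the sharp bound $m + n - 1$ on $\mathsf{F}(\bar{Y}, \bar{Y}')$. Direct use of the Verdier triangles alone is too crude: one obtains only $W \in \thick[m+n-1]{\cat{U}}{\mathsf{F}(X, X')}$ and $\mathsf{F}(B, B') \in \thick[m+n-3]{\cat{U}}{\mathsf{F}(X, X')}$, which combine naively to $\thick[2m + 2n - 4]{\cat{U}}{\mathsf{F}(X, X')}$. The $-1$ improvement must come from exploiting all six hocartesian squares coherently. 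The plan is to apply the pasting properties of \cref{combiningsquares} to the hocartesian squares from the strong Verdier structure---in particular the homotopy pushout description of $W$ from square (I) as the hopush of $\mathsf{F}(\bar{Y}, A') \gets \mathsf{F}(A, A') \to \mathsf{F}(A, \bar{Y}')$---to construct a finer filtration of $\mathsf{F}(\bar{Y}, \bar{Y}')$ of total length $m + n - 1$, analogous to the totalization of an $m \times n$ bicomplex in which the ``base'' contribution from $\mathsf{F}(X, X')$ is counted only once.
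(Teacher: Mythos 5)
Your proposal correctly identifies the key obstacle (naive use of the three Verdier triangles gives a bound of roughly $2m + 2n$ rather than $m + n - 1$) and names the right fix (a diagonal/totalization-style filtration that charges the $\mathsf{F}(X,X')$-pieces only once), but it stops exactly where the real work begins. The "plan" sentence is not a proof: you must actually construct the filtration objects and, crucially, compute the cones of the maps between them, and neither step is done. Moreover, the induction-on-$m+n$ framing does not help here. The inductive hypothesis only tells you that each of the pieces $\mathsf{F}(A,\bar{Y}')$, $\mathsf{F}(B,B')$, etc., has the right \emph{level}; it gives you no control over how their filtrations interlock, which is exactly what is needed to thread them into a single filtration of $\mathsf{F}(\bar{Y},\bar{Y}')$ of length $m+n-1$. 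Because of this, the inductive step is stuck at precisely the point you flagged as the "main obstacle."

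The paper's proof does not use induction on $m+n$ at all. Instead it picks entire building sequences $Y_0 \to Y_1 \to \cdots \to Y_m$ (with $X_i \in \thick[1]{\cat{S}}{X}$) and $Y'_0 \to \cdots \to Y'_n$ at once, sets $Z_{i,j} = \mathsf{F}(Y_i,Y'_j)$, and defines the global filtration object
\begin{equation*}
W_k \colonequals Z_{1,k} +_{Z_{1,k-1}} Z_{2,k-1} +_{Z_{2,k-2}} \cdots +_{Z_{k-1,1}} Z_{k,1}
\end{equation*}
as an iterated homotopy pushout along the diagonal $i + j = k + 1$. The strong Verdier structure supplies, for each $(i,j)$, the homotopy pushout $Z_{i-1,j} +_{Z_{i-1,j-1}} Z_{i,j-1}$ together with the triangle whose third term is $\mathsf{F}(X_i,X'_j)$ and the compatibility in \cref{biexact_level:mor_compatible}. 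Then the associativity of homotopy pushouts (\cref{expandSum}) plus \cref{htpy_pushout_same_base} are used to build the maps $W_k \to W_{k+1}$ and to identify
\begin{equation*}
\cone(W_k \to W_{k+1}) \cong \bigoplus_{i+j=k+2}\mathsf{F}(X_i,X'_j) \in \thick[1]{\cat{U}}{\mathsf{F}(X,X')}.
\end{equation*}
Since the sequences stabilize after steps $m$ and $n$, one gets $W_{m+n-1} = \mathsf{F}(Y_m,Y'_n)$, and $Y,Y'$ are retracts of $Y_m,Y'_n$, which finishes the proof. The ingredients you would need to carry out your plan are precisely these two: the associativity/rearrangement of iterated homotopy pushouts, and the cone computation from \cref{htpy_pushout_same_base} applied inductively along the diagonal; without making those explicit the proof does not close.

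Your base case (with $m=1$ or $n=1$) is fine as written.
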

\begin{proof}
For $i,j \geq 1$, let
\begin{equation} \label{eq:setupBuilding}
\begin{tikzcd}[row sep=0em,column sep=small]
Y_{i-1} \ar[r] & Y_i \ar[r] & X_i \ar[r] & \susp Y_{i-1} \\
Y'_{j-1} \ar[r] & Y'_j \ar[r] & X'_j \ar[r] & \susp Y'_{j-1}
\end{tikzcd} \quad\text{with}\quad
\begin{tikzcd}[row sep=0em,column sep=0.2em]
X_i \in \thick[1]{\cat{S}}{X} \\
X'_i \in \thick[1]{\cat{T}}{X'}
\end{tikzcd}
\end{equation}
be exact triangles in $\cat{S}$ and $\cat{T}$, respectively, with $Y_0 = 0$ and $Y'_0 = 0$. We set $Z_{i,j} \coloneqq \mathsf{F}(Y_i,Y'_j)$. Since $\mathsf{F}$ admits a strong Verdier structure, there exists a homotopy pushout of the span $Z_{i-1,j} \leftarrow Z_{i-1,j-1} \rightarrow Z_{i,j-1}$ that fits into an exact triangle
\begin{equation*}
(Z_{i-1,j} +_{Z_{i-1,j-1}} Z_{i,j-1}) \to Z_{i,j} \to \mathsf{F}(X_i,X'_j) \to \susp (Z_{i-1,j} +_{Z_{i-1,j-1}} Z_{i,j-1})
\end{equation*}
and makes the following diagram commute
\begin{equation} \label{biexact_level:mor_compatible}
\begin{tikzcd}
Z_{i-1,j-1} \ar[r] \ar[d] & Z_{i,j-1} \ar[d] \ar[ddr,bend left=15] \\
Z_{i-1,j} \ar[r] \ar[rrd,bend right=15] & Z_{i-1,j} +_{Z_{i-1,j-1}} Z_{i,j-1} \ar[dr] \\
&& Z_{i,j} \nospacepunct{.}
\end{tikzcd}
\end{equation}

We let
\begin{equation*}
W_k \coloneqq Z_{1,k} +_{Z_{1,k-1}} Z_{2,k-1} +_{Z_{2,k-2}} \dots +_{Z_{k-1,1}} Z_{k,1}\quad \text{for } k \geq 1
\end{equation*}
be the iterated homotopy pushout over the diagonal $i+j=k+1$. By the associativity of the homotopy pushout we can write
\begin{align*}
W_k &= Z_{1,k} +_{Z_{1,k}} (Z_{1,k} +_{Z_{1,k-1}} Z_{2,k-1}) +_{Z_{2,k-1}} \ldots \\
&\quad\quad +_{Z_{k-1,2}} (Z_{k-1,2} +_{Z_{k-1,1}} Z_{k,1}) +_{Z_{k,1}} Z_{k,1} .
\end{align*}
Expressed in this way $W_k$ is an iterated homotopy pushout over the same bases as $W_{k+1}$. Since \cref{biexact_level:mor_compatible} commutes, we can inductively apply \cref{htpy_pushout_same_base} to obtain a morphism $W_k \to W_{k+1}$ with
\begin{equation*}
\cone(W_k \to W_{k+1}) = \bigoplus_{i+j=k+2} \mathsf{F}\left(X_i,X'_j\right) \in \thick[1]{\cat{U}}{\mathsf{F}(X,X')} .
\end{equation*}
As $W_1 = \mathsf{F}(X_1,X'_1) \in \thick[1]{\cat{U}}{\mathsf{F}(X,X')}$, it follows that
\begin{equation*}
\level{\cat{U}}{\mathsf{F}(X,X')}{W_k} \leq k .
\end{equation*}

Now we use this inequality to establish the claim. We may assume that $m \coloneqq \level{\cat{S}}{X}{Y}$ and $n \coloneqq \level{\cat{T}}{X'}{Y'}$ are finite. Then there exist sequences of triangles \cref{eq:setupBuilding}, such that $Y$ is a retract of $Y_m$, and $Y'$ is a retract of $Y'_n$, and the sequences stabilize afterwards: $Y_i = Y_m$ and $X_{i+1} = 0$ for $i \geq m$ and $Y'_j = Y'_n$ and $X'_{j+1} = 0$ for $j \geq n$. In particular, the morphisms $Z_{i-1,j} \to Z_{i,j}$ are identities for $i>m$ and the morphisms $Z_{i,j-1}\to Z_{i,j}$ are identities for $j>n$. Thus the iterated homotopy pushout $W_{m+n-1}$ simplifies to $W_{m+n-1} = \mathsf{F}(Y_m,Y'_n)$. It follows that $\level{\cat{U}}{\mathsf{F}(X,X')}{\mathsf{F}(Y,Y')} \leq m+n-1$. 
\end{proof}

We will treat cofibration categories in \cref{sec:cofib} to provide a large class of examples of biexact functors admitting a strong Verdier structure in \cref{sec:cofib_biexact} and hence where \cref{biexact_level} holds. Concrete examples are given in \cref{sec:cofib_biexact,sec:examples}. First we establish another level inequality in a closely related setting. 

\section{Koszul objects}

Koszul objects generalize Koszul complexes to triangulated categories. They are defined for a sequence of elements in a ring that acts on the triangulated category. 

\begin{discussion}
The \emph{(graded) center} of a triangulated category $\cat{T}$ is
\begin{equation*}
\ctr^*(\cat{T}) \coloneqq \bigoplus_{d \in \BZ} \set{\alpha \colon \id_\cat{T} \to \susp^d}{\alpha \susp = (-1)^d \susp \alpha} .
\end{equation*}
This is a graded-commutative graded ring; that means
\begin{equation*}
(\susp^{|\alpha|} \beta) \circ \alpha = (-1)^{|\alpha||\beta|} (\susp^{|\beta|} \alpha) \circ \beta ;
\end{equation*}
see \cite{Buchweitz/Flenner:2008}. An action of a graded-commutative graded ring $R$ on $\cat{T}$ is equivalent to a graded ring homomorphism $R \to \ctr^*(\cat{T})$. 
\end{discussion}

\begin{discussion}
For $X \in \cat{T}$ and a sequence $\bmalpha = \alpha_1, \ldots, \alpha_c$ in $\ctr^*(\cat{T})$, the \emph{Koszul object of $\bmalpha$ on $X$} is
\begin{equation*}
\kosobj{X}{\bmalpha} \coloneqq 
\begin{cases}
X & c = 0 \\
\cone(X \xrightarrow{\alpha_1(X)} \susp^{|\alpha_1|} X) & c = 1 \\
\kosobj{(\kosobj{X}{(\alpha_1, \ldots, \alpha_{c-1})})}{\alpha_c} & c > 1 .
\end{cases}
\end{equation*}
The Koszul object is unique up to non-unique isomorphism.
\end{discussion}

From the construction of the Koszul object we immediately get the inequality 
\begin{equation*}
\level{\cat{T}}{X}{\kosobj{X}{(\alpha_1, \ldots, \alpha_c)}} \leq 2^c.
\end{equation*}
We can improve this bound when the elements $\alpha_i$ arise from an action of a monoidal triangulated category on $\cat{T}$.

\subsection{Action by monoidal categories}

We recall the definition and fix notation for a monoidal structure on a triangulated category; see \cite[Definition~A.2.1]{Hovey/Palmieri/Strickland:1997}. For details on the coherence axioms see \cite[XI.1]{MacLane:1998} and also \cite{Kelly:1964}.

\begin{discussion} \label{monoidal_triangulated}
A \emph{monoidal triangulated category} $(\cat{S},\otimes,\unit)$ consists of a triangulated category $\cat{S}$ with a monoidal product $(\otimes,\unit)$ where $\otimes\colon \cat{S}\times \cat{S}\to \cat{S}$ is a biexact functor and $\unit \in \cat{S}$ the \emph{unit} object. This means there are isomorphisms
\begin{equation*}
\alpha \colon X \otimes (Y \otimes Z) \to (X \otimes Y) \otimes Z  ,\quad \lambda \colon \unit \otimes X \to X \quad \text{and} \quad \rho \colon X \otimes \unit \to X ,
\end{equation*}
that are natural transformations of exact functors in each variable satisfying the coherence axioms of a monoidal category.

A monoidal triangulated category $(\cat{S},\otimes,\unit)$ is \emph{symmetric} if it is equipped with an isomorphism
\begin{equation*}
\sigma \colon X \otimes Y \to Y \otimes X
\end{equation*}
that is a natural transformation of exact functors in each variable satisfying the coherence axioms of a symmetric monoidal category. Symmetric monoidal triangulated categories are also called \emph{tensor triangulated categories}. Some sources additionally assume that the monoidal structure is closed; we discuss closed monoidal structures on triangulated categories in \cref{symmetricmonoidalclosed}.
\end{discussion}

\begin{discussion}
Let $(\cat{S},\otimes,\unit)$ be a monoidal triangulated category. A \emph{ (left) action} of $\cat{S}$ on a triangulated category $\cat{T}$ consists of a biexact bifunctor $\mathsf{F}\colon \cat{S} \times \cat{T} \to \cat{T}$ together with isomorphisms
\begin{equation*}
\alpha \colon \mathsf{F}(X,\mathsf{F}(Y,Z)) \to \mathsf{F}(X \otimes Y,Z) \quad \text{and} \quad \lambda \colon \mathsf{F}(\unit,Z) \to Z ,
\end{equation*}
that are natural transformations of exact functors in each variable satisfying coherence axioms analogous to the ones of a monoidal category; see for example \cite[Definition~4.1.6]{Hovey:1999} and \cite[Section~1]{Buan/Krause/Snashall/Solberg:2020} for details. In particular, any monoidal triangulated category $(\cat{S},\otimes,\unit)$ acts on itself via the tensor product $\otimes\colon \cat{S}\times \cat{S}\to \cat{S}$.
\end{discussion}

\begin{discussion}
Let $(\cat{S},\otimes,\unit)$ be a monoidal triangulated category and $\mathsf{F}$ an action of $\cat{S}$ on a triangulated category $\cat{T}$. We denote by
\begin{equation*}
\End[*]{\cat{S}}{X} \coloneqq \bigoplus_{n \in \BZ} \cat{S}(X,\susp^n X)
\end{equation*}
the graded endomorphism ring. Then the monoidal structure induces a homomorphism of graded rings
\begin{equation*}
\End[*]{\cat{S}}{\unit} \to \ctr^*(\cat{T})  ,\quad f \mapsto \alpha_f \coloneqq (X \cong \mathsf{F}(\unit,X) \xrightarrow{\mathsf{F}(f,X)} \mathsf{F}(\susp^{|f|} \unit,X) \cong \susp^{|f|} X) ;
\end{equation*}
see for example \cite[Proposition~2.1]{Buan/Krause/Snashall/Solberg:2020}. The isomorphisms involve $\lambda$ from the $\cat{S}$-action and $\theta$ from \cref{biexact}. We say $\alpha \in \ctr^*(\cat{T})$ is \emph{induced by $\mathsf{F}$}, if $\alpha = \alpha_f$ for some $f \in \End{\cat{S}}{\unit}$. We can choose
\begin{equation} \label{kosobj_can_choice}
\kosobj{X}{\alpha_f} = \mathsf{F}(\cone(f),X)
\end{equation}
as the Koszul object of $\alpha_f$ on $X$; in particular the Koszul object is functorial in $X$ for $\alpha_f$. Moreover, for $f,g \in \End[*]{\cat{S}}{\unit}$ we have an isomorphism
\begin{equation*}
\begin{aligned}
\kosobj{X}{(\alpha_f,\alpha_g)} &\cong \mathsf{F}(\cone(g),\mathsf{F}(\cone(f),X)) \cong \mathsf{F}(\cone(g) \otimes \cone(f),X) \\
&\cong \mathsf{F}(\cone(f) \otimes \cone(g),X) \cong \kosobj{X}{(\alpha_g,\alpha_f)} .
\end{aligned}
\end{equation*}
If the monoidal product is symmetric, then this isomorphism is induced by the natural isomorphism $\sigma$. Otherwise, the isomorphism $\cone(g) \otimes \cone(f) \cong \cone(f) \otimes \cone(g)$ is not canonical, though its existence follows from the $3 \times 3$ diagram obtained by applying $\otimes$ to the triangles involving $f$ and $g$.
\end{discussion}

In general, the objects $\kosobj{X}{(\alpha,\beta)}$ and $\kosobj{X}{(\beta,\alpha)}$ need not be isomorphic, as illustrated by the following example.

\begin{example} \label{counter_koszul_object}
Let $k$ be a field and $A = k[x]/(x^2)$. By \cite[Proposition~5.4]{Krause/Ye:2011}, the center of $\dbcat{\mod{A}}$ is the trivial extension ring $k[\zeta] \ltimes \prod_{r \geqslant 0} k$ where $\zeta$ is of degree $2$ if the characteristic of $k$ is not $2$ and of degree $1$ if $k$ is of characteristic $2$, and $k = k[\zeta]/(\zeta)$ as a $k[\zeta]$-module. The ring $k[\zeta]$ is the Hochschild cohomology of $A$ over $k[x]$ and the elements of $k[\zeta]$ are induced by a bifunctor; we discuss the action of Hochschild cohomology in \cref{sec:hochschild}. We focus on the elements in the center coming from $\prod_{r \geqslant 0} k$. The category $\dbcat{\mod{A}}$ is a Krull-Schmidt category where the indecomposable objects are the complexes $A_m^n$ that have $A$ in degrees $m \leq d \leq n$ connected by differentials $x\id_A$ and are zero elsewhere. We take $\eta_0, \eta_1 \in \prod_{r \geqslant 0} k$ determined by
\begin{equation*}
\eta_r(A_m^n) = \begin{cases} 
x_m^n & n-m = r \\
0 & n-m \neq 0 ,
\end{cases}
\end{equation*}
where $x_m^n$ is the morphism with $(x_m^n)_m = x \id_A$ and zero otherwise. This morphism is not homotopic to zero or $\id_A$. For these objects we obtain
\begin{equation*}
\kosobj{A_n^n}{\eta_0} = A_n^{n+1} \quad \text{and} \quad \kosobj{A_n^n}{\eta_1} = A_n^n \oplus \susp A^n_n .
\end{equation*}
In particular we obtain 
\begin{equation*}
\kosobj{A_n^n}{(\eta_0,\eta_1)} = A_n^{n+2} \oplus A_{n+1}^{n+1} \not\cong A_n^{n+1} \oplus \susp A_n^{n+1} = \kosobj{A^n_n}{(\eta_1,\eta_0)} .
\end{equation*}
Moreover, we have
\begin{equation*}
\kosobj{A_n^n}{(\eta_0+\eta_1)} = A_n^{n+1} \quad \text{and} \quad (\eta_0 + \eta_1)(\kosobj{A_n^n}{(\eta_0+\eta_1)}) = x_n^{n+1} \neq 0 .
\end{equation*}
Thus elements in the center of $\cat{T}$ are not necessarily trivial on their Koszul objects.
\end{example}

\subsection{Level inequalities involving Koszul objects}

For Koszul objects, that are induced by an action, we obtain:

\begin{lemma} \label{level_easy_kos}
Let $\cat{T}$ be a triangulated category and $\bmalpha = \alpha_1, \ldots, \alpha_c$ a sequence of elements in $\ctr^*(\cat{T})$ each induced by an action of a monoidal triangulated category $\mathsf{F}_i \colon \cat{S}_i \times \cat{T} \to \cat{T}$. Then
\begin{equation*}
\level{\cat{T}}{\kosobj{X}{\bmalpha}}{\kosobj{Y}{\bmalpha}} \leq \level{\cat{T}}{X}{Y}
\end{equation*}
for any objects $X$ and $Y$ in $\cat{T}$.
\end{lemma}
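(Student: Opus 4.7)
The plan is to reduce the statement to the basic fact that any exact functor preserves level, and then iterate over the sequence $\bmalpha$. The key input from earlier in the paper is the formula \cref{kosobj_can_choice}: since each $\alpha_i$ is induced by an action $\mathsf{F}_i \colon \cat{S}_i \times \cat{T} \to \cat{T}$, we may choose $f_i \in \End{\cat{S}_i}{\unit_i}$ with $\alpha_i = \alpha_{f_i}$ and set
\begin{equation*}
\kosobj{Z}{\alpha_i} = \mathsf{F}_i(\cone(f_i), Z)
\end{equation*}
for any $Z \in \cat{T}$. In particular, writing $\mathsf{G}_i \colonequals \mathsf{F}_i(\cone(f_i), -) \colon \cat{T} \to \cat{T}$, we obtain by the recursive definition of the Koszul object
\begin{equation*}
\kosobj{Z}{\bmalpha} = \mathsf{G}_c \mathsf{G}_{c-1} \cdots \mathsf{G}_1(Z).
\end{equation*}
Each $\mathsf{G}_i$ is an exact endofunctor of $\cat{T}$, since $\mathsf{F}_i$ is biexact in its second argument.

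The main step is the elementary observation that any exact functor $\mathsf{G} \colon \cat{T} \to \cat{T}$ satisfies
\begin{equation*}
\level{\cat{T}}{\mathsf{G}(X)}{\mathsf{G}(Y)} \leq \level{\cat{T}}{X}{Y}.
\end{equation*}
I would verify this by induction on $n = \level{\cat{T}}{X}{Y}$: the case $n = 0$ is trivial, the case $n = 1$ follows because $\mathsf{G}$ commutes with suspension up to isomorphism and preserves finite coproducts and retracts, and the inductive step is obtained by applying $\mathsf{G}$ to an exact triangle $Y' \to Y \oplus \tilde{Y} \to Y'' \to \susp Y'$ witnessing $Y \in \thick[n]{\cat{T}}{X}$ and using that $\mathsf{G}$ sends this to an exact triangle of the same shape.

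Combining these two ingredients gives the result: iterating the inequality through the composition $\mathsf{G}_c \cdots \mathsf{G}_1$ yields
\begin{equation*}
\level{\cat{T}}{\kosobj{X}{\bmalpha}}{\kosobj{Y}{\bmalpha}} = \level{\cat{T}}{\mathsf{G}_c \cdots \mathsf{G}_1(X)}{\mathsf{G}_c \cdots \mathsf{G}_1(Y)} \leq \level{\cat{T}}{X}{Y}.
\end{equation*}
There is no real obstacle here; the only subtlety is that $\kosobj{Z}{\bmalpha}$ is only defined up to non-canonical isomorphism, but this is harmless since level depends only on the isomorphism class. The whole argument avoids any use of the Verdier structure machinery of \cref{biexact_level}, using only the functoriality of the Koszul object built into \cref{kosobj_can_choice}.
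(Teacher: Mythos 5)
Your proof is correct and follows essentially the same route as the paper: both use \cref{kosobj_can_choice} to realize each Koszul step as the exact functor $\mathsf{F}_i(\cone(f_i),-)$ and then invoke the elementary fact that exact functors cannot increase level, iterating over the sequence $\bmalpha$. The only difference is presentational — you write out the composition $\mathsf{G}_c \cdots \mathsf{G}_1$ and sketch the (standard) inductive proof that exact functors preserve level, whereas the paper states the one-step inequality and appeals to induction on $c$.
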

\begin{proof}
Let $\alpha = \alpha_f$ be an element induced by an action $\mathsf{F}$. Then by \cref{kosobj_can_choice} we have
\begin{equation*}
\level{\cat{T}}{\kosobj{X}{\alpha}}{\kosobj{Y}{\alpha}} = \level{\cat{T}}{\mathsf{F}(\cone(f),X)}{\mathsf{F}(\cone(f),Y)} \leq \level{\cat{T}}{X}{Y} ,
\end{equation*}
since $\mathsf{F}(\cone(f),-)$ is an exact functor. Hence the desired inequality holds by induction on $c$. 
\end{proof}

\begin{theorem} \label{kos_obj_level}
Let $\cat{T}$ be a triangulated category and $\bmalpha = \alpha_1, \ldots, \alpha_c$ a sequence of elements in $\ctr^*(\cat{T})$ each induced by an action of a monoidal triangulated category $\mathsf{F}_i \colon \cat{S}_i \times \cat{T} \to \cat{T}$. If each $\mathsf{F}_i$ admits a strong Verdier structure, then 
\begin{equation*}
\level{\cat{T}}{X}{\kosobj{Y}{(\alpha_1, \ldots, \alpha_c)}} \leq \level{\cat{T}}{X}{Y} + c
\end{equation*}
for any $X,Y \in \cat{T}$.
\end{theorem}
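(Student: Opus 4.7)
The plan is to induct on the length $c$ of the sequence, using the functorial description $\kosobj{Z}{\alpha_{f}} = \mathsf{F}(\cone(f),Z)$ from \cref{kosobj_can_choice} to reduce each step of the Koszul construction to a single application of the biexact level inequality \cref{biexact_level}. The base case $c = 0$ is immediate from $\kosobj{Y}{()} = Y$. For $c \geq 1$, write $\alpha_c = \alpha_{f_c}$ with $f_c \colon \unit \to \susp^{|\alpha_c|}\unit$ in $\cat{S}_c$, and set $Z \colonequals \kosobj{Y}{(\alpha_1,\ldots,\alpha_{c-1})}$. The definition of the Koszul object combined with \cref{kosobj_can_choice} then gives
\begin{equation*}
\kosobj{Y}{(\alpha_1,\ldots,\alpha_c)} = \kosobj{Z}{\alpha_c} = \mathsf{F}_c(\cone(f_c),Z) \,.
\end{equation*}

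Using the natural isomorphism $\mathsf{F}_c(\unit,X) \cong X$ from the action of $\cat{S}_c$ on $\cat{T}$, I would apply \cref{biexact_level} to $\mathsf{F}_c$, which admits a strong Verdier structure by hypothesis, to estimate
\begin{equation*}
\level{\cat{T}}{X}{\kosobj{Y}{(\alpha_1,\ldots,\alpha_c)}} = \level{\cat{T}}{\mathsf{F}_c(\unit,X)}{\mathsf{F}_c(\cone(f_c),Z)} \leq \level{\cat{S}_c}{\unit}{\cone(f_c)} + \level{\cat{T}}{X}{Z} - 1\,.
\end{equation*}
Since $\cone(f_c)$ sits in an exact triangle $\unit \to \susp^{|\alpha_c|}\unit \to \cone(f_c) \to \susp\unit$, we have $\cone(f_c) \in \thick[2]{\cat{S}_c}{\unit}$, so $\level{\cat{S}_c}{\unit}{\cone(f_c)} \leq 2$. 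The inductive hypothesis applied to the sequence $(\alpha_1,\ldots,\alpha_{c-1})$ gives $\level{\cat{T}}{X}{Z} \leq \level{\cat{T}}{X}{Y} + c - 1$. Combining these yields the claimed bound $\level{\cat{T}}{X}{Y} + c$.

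The main conceptual point—rather than a genuine obstacle—is resisting the naive approach of bounding the Koszul object by iterated use of cone subadditivity, which gives only $\level{\cat{T}}{X}{Y} \cdot 2^c$ or at best $\level{\cat{T}}{X}{Y}+2^c-1$. The improvement to $\level{\cat{T}}{X}{Y}+c$ hinges on exploiting the bifunctor refinement: each step of the iterated Koszul construction costs one additional unit of level only because \cref{biexact_level} lets us charge the two-step building of $\cone(f_c)$ from $\unit$ against the shared $-1$ in the biexact inequality. The strong Verdier structure hypothesis on each $\mathsf{F}_i$ is exactly what is needed to invoke \cref{biexact_level}, so no further input is required beyond results already established in the paper.
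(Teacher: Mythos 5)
Your proposal is correct and matches the paper's proof: both reduce by induction to the single-element case and then apply the biexact level inequality to $\mathsf{F}_c$ using $X \cong \mathsf{F}_c(\unit,X)$, $\kosobj{Z}{\alpha_c} = \mathsf{F}_c(\cone(f_c),Z)$, and $\level{\cat{S}_c}{\unit}{\cone(f_c)} \leq 2$. The only difference is cosmetic: you spell out the induction explicitly, while the paper phrases it as ``it suffices to establish the $c=1$ case.''
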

\begin{proof} By induction, it suffices to establish the inequality 
\begin{equation*}
\level{\cat{T}}{X}{\kosobj{Y}{\alpha}} \leq \level{\cat{T}}{X}{Y} + 1
\end{equation*}
for just one element $\alpha=\alpha_f$ induced by an action of a monoidal triangulated category $\mathsf{F} \colon \cat{S} \times \cat{T} \to \cat{T}$ admitting a strong Verdier structure.

Since $X\cong \mathsf{F}(\unit,X)$ and $\kosobj{Y}{\alpha}=\mathsf{F}(\cone(f),Y)$, the desired inequality follows from \cref{biexact_level} using that $\level{\cat{S}}{\unit}{\cone(f)} \leq 2$. 
\end{proof}

\begin{example} 
Let $R$ be a commutative ring. Then there is a natural embedding $R \to \ctr^*(\dcat{\Mod{R}})$. For $x_1, \ldots, x_c \in R$ we denote by $\Kos^R(x_1, \ldots, x_c)$ the Koszul complex on $x_1, \ldots, x_c$. Then
\begin{equation*}
\kosobj{Y}{(x_1, \ldots, x_c)} = Y \lotimes_R \Kos^R(x_1, \ldots, x_c)
\end{equation*}
for any $Y \in \dcat{\Mod{R}}$. The inequality in \cref{kos_obj_level} yields 
\begin{equation*}
\level{}{X}{Y \lotimes_R \Kos^R(x_1, \ldots, x_c)} \leq \level{}{X}{Y} + c .
\end{equation*}
When $Y$ is an $R$-module and $x_1, \ldots, x_c$ is a $Y$-regular sequence, then 
\begin{equation*}
Y \lotimes_R \Kos^R(x_1, \ldots, x_c) \simeq Y/(x_1, \ldots, x_c) .
\end{equation*}
Thus we recover the inequality
\begin{equation*}
\projdim_R(Y/(x_1, \ldots, x_c)) \leq \projdim_R(Y) + c
\end{equation*}
for $X = R$; see for example \cite[Exercise~1.3.6]{Bruns/Herzog:1998}. 
\end{example}

\begin{remark}
The upper bound in \cref{kos_obj_level} can be achieved. In fact, \cite[Theorem~3.3]{Bergh/Iyengar/Krause/Oppermann:2010} provides conditions such that
\begin{equation*}
\level{\cat{T}}{X}{\kosobj{X}{(\alpha_1, \ldots, \alpha_c)}} \geq c + 1
\end{equation*}
for some sequence $\alpha_1, \ldots, \alpha_c$.
\end{remark}

\section{Stable cofibration categories and their homotopy categories} \label{sec:cofib}

We are interested in triangulated categories that arise as homotopy categories of stable cofibration categories. {Such triangulated categories are called topological and they encompass all algebraic triangulated categories.} Similar to Quillen model categories, cofibration categories are a model for homotopy theory. They are dual to Brown's notion of categories of fibrant objects in \cite[I.1.]{Brown:1973} and correspond to precofibration categories in which all objects are cofibrant in the terminology of \cite{RadulescuBanu:2009}. Szumi{\l}o proved in \cite{Szumilo:2017} that the homotopy theory of cofibration categories is equivalent to the homotopy theory of finitely cocomplete $\infty$-categories. In this section we discuss definitions and important properties of cofibration categories and stable cofibration categories; for a general reference see \cite{Schwede:2013}. Schwede proved that the homotopy category of a stable cofibration category is triangulated. We will supplement his proof in \cref{triangulationstrong} to show that the triangulation is strong in the sense of May \cite{May:2001}. \cref{mayervietoris} extends \cite[Lemma~5.7]{May:2001} to relate pushouts in a stable cofibration category to homotopy pushouts in the associated triangulated category. 

\begin{discussion}
Any \emph{cofibration category} $\cat{C}$ comes with two classes of morphisms; a class of \emph{cofibrations} and a class of \emph{weak equivalences}. These are subject to the following axioms:
\begin{enumerate}
\item Every isomorphism is a weak equivalence and weak equivalences satisfy the $2$-out-of-$3$ property: For composable morphisms $f$ and $g$, if two out of $f$, $g$, $gf$ are weak equivalences, then so is the third.
\item Every isomorphism is a cofibration and cofibrations are closed under composition. 
\item Any diagram $Y \leftarrow X\rightarrow Z$ in which $Y\leftarrow X$ is a cofibration has a pushout $Y\cup_X Z$ and the morphism $Z \to Y \cup_X Z$ is a cofibration. If additionally, $Y\leftarrow X$ is a weak equivalence then so is $Z\to Y\cup_X Z$.
\item The category $\cat{C}$ has an initial object and every morphism from an initial object is a cofibration.
\item Any morphism $X\to Y$ in $\cat{C}$ can be factored as a cofibration followed by a weak equivalence.
\end{enumerate}
We write $X \rightarrowtail Y$ for a cofibration and $X \xrightarrow{\sim} Y$ for a weak equivalence. Maps that are both cofibrations and weak equivalences are called \emph{acyclic cofibrations}. We denote the localization of $\cat{C}$ at the weak equivalences by $\gamma\colon \cat{C} \to \Ho(\cat{C})$; see \cite[I.1.]{Gabriel/Zisman:1967}. Given a zig-zag
\[
Y_1 \xleftarrow{s_1} X_1 \xrightarrow{f_1} Y_2 \xleftarrow{s_2} X_2\rightarrow \ldots\leftarrow X_n \xrightarrow{f_n} Y_n
\]
in $\cat{C}$, where each $s_i$ is a weak equivalence, we write
\[
\gamma(Y_1 \xleftarrow{\sim} X_1 \rightarrow Y_2 \xleftarrow{\sim} X_2\rightarrow \ldots \xleftarrow{\sim} X_n \rightarrow Y_n)
\]
for the composite $\gamma(f_n)\ldots\gamma(s_2)^{-1}\gamma(f_1)\gamma(s_1)^{-1}$ in $\Ho(\cat{C})$. 
\end{discussion}

\begin{remark}
The homotopy category $\Ho(\cat{C})$ of a cofibration category may not be locally small; see \cite[Remark~A.2]{Schwede:2013}. For any model category $\cat{M}$, the full subcategory of cofibrant objects together with the cofibrations and weak equivalences of $\cat{M}$ between cofibrant objects is a cofibration category. This is the main family of examples of interest for us. In this case, the homotopy category is locally small, since the homotopy category of cofibrant objects in $\cat{M}$ is equivalent to the homotopy category of $\cat{M}$ and the latter is locally small.
\end{remark}

Let $\cat{C}$ be a pointed cofibration category. Pointed means that every initial object is also a terminal object. We write $\ast$ for the terminal object. When $X \to Y$ is a cofibration we write $Y/X$ for the pushout $Y \cup_X \ast$. 

\begin{discussion} \label{cofib_cat_cone}
An object $C$ is \emph{weakly contractible} if $C \to \ast$ is a weak equivalence. A cofibration $X \to C$ with $C$ weakly contractible is called a \emph{cone (of $X$)}. 

A morphism between objects extends to a morphism between cones in the following way: For any morphism $f \colon X\to Y$ and cones $X \to C_X$ and $Y \to C_Y$ in $\cat{C}$, there exists a morphism $\bar{f} \colon C_X \to \bar{C}$ and an acyclic cofibration $s \colon C_Y \to \bar{C}$ such that 
\[
\begin{tikzcd}
X \ar[rr,"f"] \ar[d,tail] & & Y \ar[d,tail] \\
C_X \ar[r,"\bar{f}" swap] & \bar{C} & C_Y \ar[l,"s",tail,"\sim" swap] 
\end{tikzcd}
\]
commutes in $\cat{C}$ and $C_X \cup_X C_Y \to \bar{C}$ is a cofibration; see \cite[Lemma~A.3]{Schwede:2013}. The pair $(\bar{f},s)$ is called a \emph{cone extension} of $f\colon X \to Y$. Instead of $(\bar{f},s)$ we often say $\bar{C}$ is the cone extension. Note, that if $f$ is a cofibration, then so is $\bar{f}$. The composite 
\[
\gamma(C_X/X \xrightarrow{\bar{f}/f} \bar{C}/Y \xleftarrow{s/\id_Y} C_Y/Y)
\]
in the homotopy category is independent of the chosen cone extension.
\end{discussion}

\begin{discussion} \label{cofib_cat_triangulated}
In a pointed cofibration category $\cat{C}$ we fix a cone $X \to CX$ for every object $X$. The \emph{suspension} of $X$ is $\susp X \coloneqq CX/X$. By \cite[Proposition~A.4]{Schwede:2013} this defines a functor $\susp\colon \cat{C} \to \Ho(\cat{C})$ which is given on morphisms as
\begin{equation*}
\susp (X \xrightarrow{f} Y) \coloneqq \gamma(CX/X \to \bar{C}/Y \gets CY/Y)
\end{equation*}
where $\bar{C}$ is a cone extension of $f$. Moreover, the suspension functor $\susp$ takes weak equivalences to isomorphisms and thus induces a functor $\susp\colon \Ho(\cat{C})\to \Ho(\cat{C})$. 
\end{discussion}

\begin{discussion} \label{connectmorphnat}
Let $f \colon X \rightarrowtail Y$ be a cofibration in $\cat{C}$. The \emph{connecting morphism} of $f$ is
\begin{equation*}
\delta(f) \coloneqq \gamma(Y/X \xleftarrow{\sim} CX \cup_X Y \to \susp X) .
\end{equation*}
We call
\begin{equation*}
X \xrightarrow{\gamma(f)} Y \to Y/X \xrightarrow{\delta(f)} \susp X
\end{equation*}
an \emph{elementary exact triangle}. 

We will use repeatedly that the connecting morphism is natural in the sense of \cite[Proposition~A.11]{Schwede:2013}: For any commutative square
\[
\begin{tikzcd}
X\ar[r,tail,"f"]\ar[d] & Y\ar[d] \\
X'\ar[r,tail,"f'"] & Y'
\end{tikzcd}
\]
in $\mathcal{C}$, where $f$ and $f'$ are cofibrations, the diagram
\[
\begin{tikzcd}
Y/X\ar[r,"\delta(f)"]\ar[d] & \susp X\ar[d] \\
Y'/X'\ar[r,"\delta(f')"] & \susp X'
\end{tikzcd}
\]
commutes in $\Ho(\cat{C})$.
\end{discussion}

\subsection{Stable cofibration categories}

If the induced endofunctor $\susp$ on $\Ho(\cat{C})$ is an equivalence, then the pointed cofibration category $\cat{C}$ is said to be \emph{stable}. Schwede proved in \cite[Theorem~A.12]{Schwede:2013} that the homotopy category of any stable cofibration category is triangulated where the exact triangles are those isomorphic to an elementary exact triangle.

May \cite[Lemma~5.7]{May:2001} proved for some families of stable model categories that a pushout square of cofibrant objects, where two parallel morphisms are cofibrations, induces a homotopy cartesian square in the homotopy category. Groth, Ponto, and Shulman established the result for stable derivators in \cite[Theorem 6.1]{Groth/Ponto/Shulman:2014a}, in particular extending May's result to all stable model categories. Combining May's arguments with Schwede's results we provide a proof of the analogous result for stable cofibration categories so that there is no need to change frameworks.

\begin{lemma}\label{mayervietoris}
Let
\[
\begin{tikzcd}
X \ar[r,tail,"f"] \ar[d,"g" swap] & Y \ar[d,"f'"] \\
Z \ar[r,tail,"g'" swap] & P
\end{tikzcd}
\]
be a pushout square in a stable cofibration category $\cat{C}$ with $f\colon X\to Y$ a cofibration. Then its image in $\Ho(\cat{C})$ is a homotopy cartesian square.
\end{lemma}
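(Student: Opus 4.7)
The plan is to realize the image of the given pushout in $\Ho(\cat{C})$ as the first square of a morphism of exact triangles whose third vertical arrow is an isomorphism, and then to invoke the characterization of homotopy cartesian squares recalled in \cref{htpy_cart_mor_triangle}.

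Since $f$ is a cofibration and the square is a pushout, the morphism $g'$ is also a cofibration by axiom (iii). Hence both $f$ and $g'$ give rise to elementary exact triangles in $\Ho(\cat{C})$:
\begin{equation*}
X \xrightarrow{\gamma(f)} Y \longrightarrow Y/X \xrightarrow{\delta(f)} \susp X
\qquad\text{and}\qquad
Z \xrightarrow{\gamma(g')} P \longrightarrow P/Z \xrightarrow{\delta(g')} \susp Z.
\end{equation*}
Pasting the given pushout with the pushout defining $P/Z = P \cup_Z \ast$ and reassociating identifies $P/Z = Y \cup_X \ast = Y/X$, so that $f'$ induces a canonical isomorphism $\phi\colon Y/X \xrightarrow{\cong} P/Z$ compatible in $\cat{C}$ with the projections from $Y$ and $P$.

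The key step is to apply the naturality of the connecting morphism recalled in \cref{connectmorphnat} to the given commutative square --- both of whose horizontal arrows $f$ and $g'$ are cofibrations --- which yields the commutativity of
\begin{equation*}
\begin{tikzcd}
Y/X \ar[r,"\delta(f)"] \ar[d,"\phi"] & \susp X \ar[d,"\susp g"] \\
P/Z \ar[r,"\delta(g')"] & \susp Z
\end{tikzcd}
\end{equation*}
in $\Ho(\cat{C})$. Together with the evident commutativity of the remaining two squares (the original pushout and the commuting square of projections induced by $\phi$), this assembles into a morphism of exact triangles extending the image of the pushout and having the isomorphism $\phi$ in the third vertical position. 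Precomposing the bottom triangle with $\phi^{-1}$ produces a morphism of exact triangles with identity on the third term, so \cref{htpy_cart_mor_triangle} identifies the first square --- which is the image of the pushout under $\gamma$ --- as homotopy cartesian.

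The main conceptual obstacle is the last step: \cref{htpy_cart_mor_triangle} (invoking Neeman's lemma) guarantees the existence of \emph{some} filling that makes the first square homotopy cartesian, so one must verify that the filling provided by $\gamma(g)$ is compatible with the morphism of triangles constructed here. In our situation this is forced by the fact that $\gamma(g)$ is uniquely determined by the commutativity of the original square together with the rigidity of the exact triangles involved.
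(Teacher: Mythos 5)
Your approach—assembling the image of the pushout into a morphism of exact triangles whose third vertical arrow is an isomorphism, and then invoking \cref{htpy_cart_mor_triangle}—does not quite close, and the difficulty you flag at the end is a genuine gap rather than a formality. The statement recalled in \cref{htpy_cart_mor_triangle} (which cites Neeman's Lemma~1.4.3) is an \emph{existence} statement: if one of the two left vertical arrows is \emph{not yet given}, one can \emph{construct} it so that the resulting square is homotopy cartesian. It does not assert that every morphism of exact triangles whose third vertical arrow is an isomorphism has a homotopy cartesian left square. In your setup all four vertical arrows $\gamma(g)$, $\gamma(f')$, $\phi$, $\susp\gamma(g)$ are already fixed by the pushout data, so there is nothing left to construct, and the lemma gives no information about the square you actually have. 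Your proposed resolution—that $\gamma(g)$ (or $\gamma(f')$) is ``uniquely determined by the commutativity of the original square together with the rigidity of the exact triangles''—is false: two fillers $\beta_1,\beta_2\colon Y\to P$ completing the morphism of triangles differ by a composite $Y\to Y/X\xrightarrow{\,h\,}P$ with $h$ subject only to $\pi_{P/Z}\circ h\circ v=0$, and such $h$ with $hv\ne 0$ exist in general. Morphisms of exact triangles whose left square fails to be homotopy cartesian do occur; this is precisely why the paper introduces the notion of a (strong) Verdier structure and cites the ``good morphisms'' framework of \cite{Christensen/Frankland:2022} elsewhere. So the remaining step must be proved, not observed.

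The paper avoids this issue entirely by arguing at the level of the cofibration category rather than in $\Ho(\cat{C})$. It replaces $P$ by the double mapping cylinder $M(f,g)=Y\cup_X\cyl(X)\cup_X Z$, which receives a cofibration from $Y\sqcup Z$ and is weakly equivalent to $P$ by the gluing lemma; the quotient $M(f,g)/(Y\sqcup Z)\simeq \susp X$ then gives an \emph{elementary} exact triangle
\begin{equation*}
Y\oplus Z\to P\to \susp X\to \susp(Y\oplus Z)\,,
\end{equation*}
which is by definition the triangle witnessing the homotopy cartesian property after one verifies (using naturality of the connecting morphism and the sign computation of \cite[Proposition~A.8(iii)]{Schwede:2013}) that the connecting morphism is $\susp\left(\begin{smallmatrix}-f\\ g\end{smallmatrix}\right)$. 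In other words, the paper constructs the defining triangle directly from the cofibration-category structure, where pushouts are honest colimits and the ambiguity you ran into does not arise. If you wish to salvage your approach, you would need to prove separately that a morphism of exact triangles with an isomorphism in the third slot always has a homotopy cartesian left square; this is not supplied by the references the paper recalls and would require its own argument.
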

\begin{proof}
By rotation, it is enough to construct an exact triangle
\begin{equation*}
Y \oplus Z \xrightarrow{\left(\begin{smallmatrix}g' & f'\end{smallmatrix}\right)} P \xrightarrow{\partial} \susp X \xrightarrow{\susp\left(\begin{smallmatrix} -f \\ g\end{smallmatrix}\right)} \susp(Y\oplus Z) .
\end{equation*}
Let $\cyl(X)$ be a cylinder object for $X$; that is we factor the fold map $X\sqcup X\to X$ into a cofibration followed by a weak equivalence
\begin{equation*}
\begin{tikzcd}
X\sqcup X\ar[r,tail,"{(i_0,i_1)}"] & \cyl(X)\ar[r,"q","\sim" swap] & X ;
\end{tikzcd}
\end{equation*}
see for example \cite[Section~1.5]{RadulescuBanu:2009}. Let 
\begin{equation*}
M(f,g)\coloneqq\cyl(X)\cup_{X\sqcup X} (Y\sqcup Z)= Y\cup_X \cyl(X) \cup_X Z
\end{equation*}
be the double mapping cylinder of $(f,g)$. The gluing lemma applied to 
\[
\begin{tikzcd}
\cyl(X)\cup_X Z\ar[d,"\sim"] & \cyl(X) \ar[l] & X \ar[l,"i_0" swap] \ar[r,tail,"f"] \ar[d,"="] & Y\ar[d,"="] \\
Z && X \ar[ll,"g" swap] \ar[r,tail,"f"] & Y
\end{tikzcd}
\]
provides a weak equivalence $M(f,g) \xrightarrow{\sim} P$; see \cite[Lemma~1.4.1(1)(b)]{RadulescuBanu:2009}. Moreover, 
\begin{equation*}
M(f,g)/(Y\sqcup Z)\cong \cyl(X)/(X\sqcup X)
\end{equation*}
and we have weak equivalences
\[
\cyl(X)/(X\sqcup X)=\ast\cup_X \cyl(X)\cup_X \ast \xleftarrow{\sim} \ast \cup_X \cyl(X) \cup_X CX \xrightarrow{\sim} \ast \cup_X CX = \susp X.
\] 

It follows that the elementary exact triangle arising from the cofibration $Y\sqcup Z\rightarrowtail M(f,g)$ yields an exact triangle 
\[
Y \oplus Z \xrightarrow{\left(\begin{smallmatrix}g' & f'\end{smallmatrix}\right)} P \rightarrow \susp X \xrightarrow{\delta} \susp(Y\oplus Z) .
\]
It remains to check that $\delta$ is the suspension of $(-f,g)^T \colon X\to Y\oplus Z$. We show that its projection to $\susp Y$ is $-\susp f$. Choosing a cone extension $CX\to C\gets CZ$ of $g\colon X\to Z$, this follows from the naturality of the connecting morphism \cref{connectmorphnat} applied to 
\[
\begin{tikzcd}
Y\sqcup Z\ar[r,tail]\ar[d] & Y\cup_X \cyl(X)\cup_X Z\ar[d] \\
Y\sqcup C \ar[r,tail] & Y\cup_X \cyl(X)\cup_X C \\
Y\ar[r,tail]\ar[u]\ar[d] & Y\cup_X \cyl(X)\cup_X CX\ar[u]\ar[d, "\id_Y \cup q \cup_X \id_{CX}"]\\
Y\ar[r,tail] & Y\cup_X X \cup_X CX \nospacepunct{.}
\end{tikzcd}
\]
The connecting morphism of the cofibration $Y\to Y\cup_X CX$ is indeed $-\susp f$ by the proof of the rotation axiom in \cite[Theorem~A.12]{Schwede:2013}.

Similarly, the projection of $\delta \colon \susp X\to \susp(Y\oplus Z)$ to $\susp Z$ is $\susp g$. The sign of $-\susp g$ cancels with the sign arising from
\[
\gamma(\susp X = \ast \cup_X CX \xleftarrow{\sim} CX\cup_X CX \xrightarrow{\sim} CX\cup_X\ast = \susp X) = -\id_{\susp X} ;
\]
see \cite[Proposition~A.8(iii)]{Schwede:2013}.
\end{proof}

The following supplements the proof of the octahedral axiom in $\Ho(\cat{C})$; see \cite[Theorem~A.12]{Schwede:2013}. In particular, we show that the triangulation of $\Ho(\cat{C})$ is \emph{strong} in the sense of \cite[Definition~3.8]{May:2001}.

\begin{proposition}\label{triangulationstrong}
Let $\begin{tikzcd}[column sep=small] X \ar[r,tail,"f"] & Y \ar[r,tail,"g"] & Z \end{tikzcd}$ be a composition of cofibrations in a stable cofibration category $\cat{C}$. Then
\begin{equation*}
\begin{tikzcd}
Y \ar[r] \ar[d,"\gamma(g)" swap] & Y/X \ar[d] \\
Z \ar[r] & Z/X
\end{tikzcd}
\quad \text{and} \quad
\begin{tikzcd}
Z/X \ar[r,"\delta(gf)"] \ar[d] & \susp X \ar[d,"\susp f"] \\
Z/Y \ar[r,"\delta(g)"] & \susp Y
\end{tikzcd}
\end{equation*}
are homotopy pushout squares in $\Ho(\cat{C})$.
\end{proposition}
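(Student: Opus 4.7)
The plan is to handle the two squares separately. The first reduces immediately to \cref{mayervietoris}, while the second requires comparing exact triangles through a morphism of triangles whose middle column is the identity.

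For the first square, observe that
\begin{equation*}
\begin{tikzcd}
Y \ar[r] \ar[d,tail,"g"'] & Y/X \ar[d] \\
Z \ar[r] & Z/X
\end{tikzcd}
\end{equation*}
is a pushout in $\cat{C}$: by associativity of pushouts, $(Y \cup_X \ast) \cup_Y Z = Z \cup_X \ast = Z/X$. Since $g \colon Y \to Z$ is a cofibration, \cref{mayervietoris} implies its image in $\Ho(\cat{C})$ is homotopy cartesian.

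For the second square, the connecting morphisms $\delta(gf)$ and $\delta(g)$ are represented by zig-zags, so I would not try to realize the square directly as the image of a pushout in $\cat{C}$. Instead, I would construct the commutative diagram of exact triangles in $\Ho(\cat{C})$
\begin{equation*}
\begin{tikzcd}
X \ar[r] \ar[d,"f"'] & Z \ar[r] \ar[d,equal] & Z/X \ar[r,"\delta(gf)"] \ar[d,"p"] & \susp X \ar[d,"\susp f"] \\
Y \ar[r] & Z \ar[r] & Z/Y \ar[r,"\delta(g)"] & \susp Y
\end{tikzcd}
\end{equation*}
where $p \colon Z/X \to Z/Y$ is the natural quotient map in $\cat{C}$ (identifying $(Z/X)/(Y/X)$ with $Z/Y$ by associativity of pushouts). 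The rows are the elementary exact triangles associated with the cofibrations $gf$ and $g$. The first two squares commute already in $\cat{C}$ by the construction of $p$, and the third square commutes in $\Ho(\cat{C})$ by the naturality of the connecting morphism \cref{connectmorphnat} applied to the commutative square of cofibrations
\begin{equation*}
\begin{tikzcd}
X \ar[r,tail,"gf"] \ar[d,tail,"f"'] & Z \ar[d,equal] \\
Y \ar[r,tail,"g"'] & Z
\end{tikzcd}
\end{equation*}
in $\cat{C}$.

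Since the middle column of this morphism of triangles is the identity on $Z$, hence an isomorphism, the third square (which is the second square of the proposition) is homotopy cartesian. This is a standard consequence of the octahedral axiom in any triangulated category, namely that a morphism of exact triangles with an isomorphism in one column yields a homotopy cartesian square between each pair of adjacent columns; it can be deduced from \cite[Lemma~1.4.3]{Neeman:2001} together with the discussion in \cref{htpy_cart_mor_triangle}. The main obstacle, beyond the setup of the morphism of triangles, is the verification of the commutativity of the third square, which is routine once one has \cref{connectmorphnat} at hand.
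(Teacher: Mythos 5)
Your treatment of the first square is correct and coincides with the paper's argument: one identifies the square as a pushout in $\cat{C}$ (via pasting) and invokes \cref{mayervietoris}.

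The argument for the second square has a genuine gap. The key claim---that a morphism of exact triangles with an isomorphism in one column yields a homotopy cartesian square between each adjacent pair of columns---is not a theorem of triangulated category theory and is not what \cite[Lemma~1.4.3]{Neeman:2001} says. As recalled in \cref{htpy_cart_mor_triangle}, Neeman's lemma states that \emph{if one of} $V\to X$ or $T\to U$ \emph{is not given}, it can be constructed so that the square is homotopy cartesian; it does not assert that when both vertical morphisms are given and the diagram commutes, the square is automatically homotopy cartesian. In your diagram both $f\colon X\to Y$ and $p\colon Z/X\to Z/Y$ are given specific morphisms, so the lemma does not apply. After rotating, your morphism of triangles becomes exactly the situation of \cref{htpy_cart_mor_triangle} with the identity on the third object $Z$; the first square being homotopy cartesian is then a hypothesis, not a conclusion. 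Put differently, your claim would show that \emph{every} triangulated category is strongly triangulated in the sense of \cite[Definition~3.8]{May:2001}, rendering the cofibration-category hypothesis vacuous and contradicting the raison d'\^etre of this proposition; the whole point of \cref{triangulationstrong} is that the particular map $p$ coming from the cofibration structure happens to be a ``good'' choice in the octahedron, which is precisely what must be verified. The paper does this by exhibiting the second square (after replacing $\susp X$ and $\susp Y$ by cones up to weak equivalence) as a genuine pushout along a cofibration in $\cat{C}$, via a cone extension of $f$ and \cite[Lemma~1.4.1(1)(a)]{RadulescuBanu:2009}, and then applying \cref{mayervietoris} a second time. That explicit cofibration-category construction is the step your proof is missing and cannot be replaced by a purely triangulated-category argument.
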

\begin{proof} 
The first square considered in $\mathcal{C}$ fits into a commutative diagram
\begin{equation*}
\begin{tikzcd}
X \ar[d] \ar[r,tail] & Y \ar[d] \ar[r,tail] & Z \ar[d] \\
\ast \ar[r,tail] & Y/X \ar[r,tail] & Z/X
\end{tikzcd}
\end{equation*}
and it follows from the pasting lemma for pushouts that it is a pushout square. Thus its image is a homotopy pushout square in $\Ho(\cat{C})$ by \cref{mayervietoris}. 

We show that the second square is homotopy cartesian. We pick a cone extension $\bar{C}$ of $f$. This fits into a diagram
\begin{equation*}
\begin{tikzcd}
Z/X \ar[d] & CX \cup_X Z \ar[r] \ar[d] \ar[l,"\sim"] & CX/X \ar[d] \\
Z/Y & \bar{C}\cup_Y Z \ar[l,"\sim"]\ar[r] & \bar{C}/Y \\
Z/Y \ar[u,"=" swap] & CY \cup_Y Z \ar[l,"\sim"] \ar[u,"\sim" swap] \ar[r] & CY/Y \ar[u,"\sim" swap]
\end{tikzcd}
\end{equation*}
consisting of four commutative squares in $\cat{C}$. Since weak equivalences become isomorphisms in $\Ho(\cat{C})$, it is enough to show that the upper right-hand square is homotopy cartesian in $\Ho(\cat{C})$. We will show that it is a pushout square in $\cat{C}$ and that $CX \cup_X Z \to \bar{C} \cup_Y Z$ is a cofibration in order to apply \cref{mayervietoris}.

The pasting lemma for pushouts yields diagrams of pushout squares
\begin{equation*}
\begin{tikzcd}
X \ar[r,tail] \ar[d,tail] & Z \ar[r] \ar[d,tail] & \ast \ar[d,tail] \\
CX \ar[r,tail] & CX \cup_X Z \ar[r] & \susp X
\end{tikzcd}
\quad \text{and} \quad
\begin{tikzcd}
Y \ar[r,tail] \ar[dd,tail] & Z \ar[r] \ar[d,tail] & \ast \ar[d,tail] \\
& CX \cup_X Z \ar[r] \ar[d] & \susp X \ar[d] \\
\bar{C} \ar[r,tail] & \bar{C} \cup_Y Z \ar[r] & \bar{C}/Y \nospacepunct{.}
\end{tikzcd}
\end{equation*}
By \cref{cofib_cat_cone} the morphism $CX \cup_X CY \to \bar{C}$ is a cofibration, and hence so is $CX \cup_X Y \to \bar{C}$. We apply \cite[Lemma~1.4.1(1)(a)]{RadulescuBanu:2009} to
\begin{equation*}
\begin{tikzcd}
CX \ar[d,tail] & X \ar[l,tail] \ar[d,tail] \ar[r,tail] & Z \ar[d,"="] \\
\bar{C} & Y \ar[l,tail] \ar[r,tail] & Z
\end{tikzcd}
\end{equation*}
and obtain that $CX \cup_X Z \to \bar{C} \cup_Y Z$ is a cofibration. Thus \cref{mayervietoris} applied to the square with corners $CX \cup_X Z$, $\bar{C} \cup_Y Z$, $\susp X$, $\bar{C}/Y$ provides the desired homotopy cartesian square.
\end{proof}

\begin{corollary}
The homotopy category of a stable cofibration category is a strongly triangulated category in the sense of \cite[Definition~3.8]{May:2001}.
\end{corollary}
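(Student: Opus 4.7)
The plan is to reduce the corollary almost entirely to the preceding \cref{triangulationstrong} together with Schwede's theorem \cite[Theorem~A.12]{Schwede:2013}. Recall that, per \cite[Definition~3.8]{May:2001}, a strongly triangulated category is a triangulated category satisfying an enhanced octahedral axiom: for every composable pair of morphisms one can choose an octahedral diagram in which the two natural squares relating the cones (the ``Verdier square'' and its rotation) are homotopy cartesian. Schwede already establishes that $\Ho(\cat{C})$ is triangulated, with exact triangles being exactly those isomorphic to elementary exact triangles, so only the enhancement needs to be verified.

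First I would observe that any pair of composable morphisms in $\Ho(\cat{C})$ may be replaced, up to isomorphism in $\Ho(\cat{C})$, by a composable pair of cofibrations $\begin{tikzcd}[column sep=small] X \ar[r,tail,"f"] & Y \ar[r,tail,"g"] \end{tikzcd} Z$ in $\cat{C}$. Indeed, using the factorization axiom one factors a representative of the first morphism as a cofibration followed by a weak equivalence, transports the second morphism along this weak equivalence, and factors again; the resulting cofibrations represent the original morphisms up to the localization $\gamma$. Under this replacement, the three cones appearing in the octahedral axiom are realized (up to isomorphism in $\Ho(\cat{C})$) by the three quotients $Y/X$, $Z/Y$, $Z/X$ in $\cat{C}$, with connecting morphisms as in \cref{connectmorphnat}.

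Next I would feed this data into Schwede's construction of the octahedron from \cite[Theorem~A.12]{Schwede:2013}: the two natural squares in the octahedron are exactly the two squares appearing in \cref{triangulationstrong}, namely the square relating $Y \to Y/X$ and $Z \to Z/X$, and the square relating the connecting morphisms $\delta(gf)$ and $\delta(g)$. \cref{triangulationstrong} asserts precisely that both squares are homotopy cartesian in $\Ho(\cat{C})$, which is the content of May's axiom. The main obstacle has already been absorbed into \cref{triangulationstrong,mayervietoris}; after that, nothing remains except to check that the replacement by cofibrations above is compatible with May's axiom, which is immediate because homotopy cartesian squares in $\Ho(\cat{C})$ are invariant under isomorphism of squares. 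Hence $\Ho(\cat{C})$ is strongly triangulated.
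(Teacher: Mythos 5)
Your proposal is correct and follows essentially the same route as the paper: reduce to the case of a composition of cofibrations $X \rightarrowtail Y \rightarrowtail Z$ in $\cat{C}$ (the paper points to the identical reduction carried out in Schwede's proof of the octahedral axiom, which is exactly the factor-transport-factor argument you sketch), and then observe that \cref{triangulationstrong} supplies the two homotopy cartesian squares required by May's Definition~3.8.
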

\begin{proof}
By the same argument as in the proof of the octahedral axiom in \cite[Theorem~A.12]{Schwede:2013} it is enough to consider a composition $X \xrightarrow{f} Y \xrightarrow{g} Z$ in the homotopy category where $f = \gamma(f')$ and $g = \gamma(g')$ for cofibrations $f'$ and $g'$. Then the claim holds by \cref{triangulationstrong}.
\end{proof}

\section{Biexact functors between stable cofibration categories}
\label{sec:cofib_biexact}

In this section we show that a biexact functor between stable cofibration categories induces a biexact functor of triangulated categories and that the induced functor admits a strong Verdier structure; see \cref{cofib_biexact_verdier}. Analogous results have been proved for monoidal products; for some families of monoidal stable model categories see May \cite[Section~6]{May:2001} and for strong, stable monoidal derivators see \cite[Theorem~6.2]{Groth/Ponto/Shulman:2014}. In particular, we show that May's arguments extend to any Quillen bifunctor between stable model categories as the bifunctors can be restricted to biexact functors between categories of cofibrant objects.

\begin{discussion} \label{exact_fun_cofib}
A functor $\mathsf{F} \colon \cat{C}\to \cat{D}$ between cofibration categories is \emph{exact} if it preserves initial objects, cofibrations, weak equivalences, and pushouts of diagrams, where one leg is a cofibration. Since $\mathsf{F}$ preserves weak equivalences, it induces a functor $\Ho(\mathsf{F}) \colon \Ho(\cat{C})\to \Ho(\cat{D})$ on the homotopy categories. If $\cat{C}$ and $\cat{D}$ are stable, then $\Ho(\mathsf{F})$ is an exact functor of triangulated categories by \cite[Proposition~A.14]{Schwede:2013}. 

The natural isomorphism $\tau \colon \Ho(\mathsf{F}) \susp \to \susp \Ho(\mathsf{F})$ is given as follows: We fix an object $X$ and let $C$ be a cone extension of the identity on $\mathsf{F}(X)$ with respect to the cones $C \mathsf{F}(X)$ and $\mathsf{F}(CX)$. Then
\begin{equation*}
\tau(X) = \gamma(\mathsf{F}(CX)/\mathsf{F}(X) \to C/\mathsf{F}(X) \xleftarrow{\sim} C \mathsf{F}(X)/\mathsf{F}(X)) .
\end{equation*}
\end{discussion}

\begin{lemma} \label{exact_nat_susp}
Let $\mathsf{F},\mathsf{G} \colon \cat{C} \to \cat{D}$ be exact functors between stable cofibration categories and $\eta \colon \mathsf{F} \to \mathsf{G}$ a natural transformation. Then $\Ho(\eta) \colon \Ho(\mathsf{F}) \to \Ho(\mathsf{G})$ is a natural transformation of exact functors; that is $(\susp \Ho(\eta)) \tau^\mathsf{F} = \tau^\mathsf{G} (\Ho(\eta) \susp)$. 
\end{lemma}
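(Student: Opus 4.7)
The plan is to reduce the equation $(\susp \Ho(\eta)) \tau^\mathsf{F} = \tau^\mathsf{G} (\Ho(\eta) \susp)$ on each object $X$ to the naturality of the connecting morphism recalled in \cref{connectmorphnat}.

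The key step is to identify, for each $X \in \cat{C}$, the natural isomorphism $\tau^\mathsf{F}(X) \colon \Ho(\mathsf{F}) \susp X \to \susp \Ho(\mathsf{F}) X$ with the connecting morphism $\delta(\mathsf{F}(X) \rightarrowtail \mathsf{F}(CX))$ of the cofibration obtained by applying the exact functor $\mathsf{F}$ to the cone $X \rightarrowtail CX$. Both are morphisms $\mathsf{F}(CX)/\mathsf{F}(X) \to C\mathsf{F}(X)/\mathsf{F}(X)$ represented by zig-zags in $\cat{D}$: the former through $C/\mathsf{F}(X)$ for a cone extension $C$ of $\id_{\mathsf{F}(X)}$ with cones $C\mathsf{F}(X)$ and $\mathsf{F}(CX)$, the latter through the pushout $C\mathsf{F}(X) \cup_{\mathsf{F}(X)} \mathsf{F}(CX)$. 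Using the universal map from the pushout to $C$ (a cofibration by the defining property of cone extensions) together with the gluing lemma applied to the acyclic cofibration $\mathsf{F}(CX) \to C$ after quotienting by $\mathsf{F}(X)$, one exhibits these zig-zags as equivalent representatives of the same morphism in $\Ho(\cat{D})$. The analogous identification $\tau^\mathsf{G}(X) = \delta(\mathsf{G}(X) \rightarrowtail \mathsf{G}(CX))$ holds.

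With these identifications in hand, the desired equation reduces to the commutativity of
\begin{equation*}
\begin{tikzcd}[column sep=large]
\mathsf{F}(CX)/\mathsf{F}(X) \ar[r, "\delta"] \ar[d, "\eta_{CX}/\eta_X" swap] & \susp \mathsf{F}(X) \ar[d, "\susp \Ho(\eta_X)"] \\
\mathsf{G}(CX)/\mathsf{G}(X) \ar[r, "\delta"] & \susp \mathsf{G}(X)
\end{tikzcd}
\end{equation*}
in $\Ho(\cat{D})$, where the left vertical morphism is the quotient of $\eta_{CX}$ by $\eta_X$; under the identifications $\mathsf{F}(\susp X) = \mathsf{F}(CX)/\mathsf{F}(X)$ and $\mathsf{G}(\susp X) = \mathsf{G}(CX)/\mathsf{G}(X)$ (which hold since $\mathsf{F}$ and $\mathsf{G}$ preserve the pushouts defining $\susp$), this quotient coincides with $\Ho(\eta)_{\susp X}$. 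Since $\mathsf{F}$ and $\mathsf{G}$ send $X \rightarrowtail CX$ to cofibrations, the naturality of $\eta$ yields a commutative square with horizontal cofibrations $\mathsf{F}(X) \rightarrowtail \mathsf{F}(CX)$ and $\mathsf{G}(X) \rightarrowtail \mathsf{G}(CX)$ linked vertically by $\eta_X$ and $\eta_{CX}$, and \cref{connectmorphnat} applied to this square gives the desired commutativity.

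The main obstacle is verifying the identification $\tau^\mathsf{F}(X) = \delta(\mathsf{F}(X) \rightarrowtail \mathsf{F}(CX))$: the two zig-zags pass through different intermediate objects, and comparing them in $\Ho(\cat{D})$ requires carefully tracking compositions and weak equivalences through the diagrams defining $\tau^\mathsf{F}$ and $\delta$. Once this identification is in place, the remainder of the argument is an immediate application of \cref{connectmorphnat}.
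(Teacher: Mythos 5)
Your strategy is a genuinely different route from the paper's. The paper never passes through the connecting morphism: it takes cone extensions $C_\mathsf{F}$ of $\id_{\mathsf{F}(X)}$ and $C_\mathsf{G}$ of $\id_{\mathsf{G}(X)}$, then a cone extension $C$ of $\eta_{CX}$ with respect to the resulting cones $\mathsf{F}(CX)\to C_\mathsf{F}$ and $\mathsf{G}(CX)\to C_\mathsf{G}$, and reads off the equation from the commutativity of the assembled zig-zag diagram in $\cat{D}$. Your reduction to \cref{connectmorphnat} is conceptually attractive since it isolates the naturality statement as the single input, whereas the paper's proof bundles it into a larger diagram chase.

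However, the pivotal claim $\tau^\mathsf{F}(X) = \delta(\mathsf{F}(X) \rightarrowtail \mathsf{F}(CX))$ is false as stated: these two morphisms differ by a sign. Already for $\mathsf{F} = \id$ one has $\tau^{\id}(X) = \id_{\susp X}$ (take the trivial cone extension $C = CX$), whereas $\delta(X \rightarrowtail CX) = \gamma(\ast\cup_X CX \xleftarrow{\sim} CX\cup_X CX \xrightarrow{\sim} CX\cup_X\ast) = -\id_{\susp X}$ by \cite[Proposition~A.8(iii)]{Schwede:2013}, the very fact quoted in the proof of \cref{mayervietoris}. The same sign persists for a general $\mathsf{F}$, so the correct identification is $\tau^\mathsf{F}(X) = -\delta(\mathsf{F}(X) \rightarrowtail \mathsf{F}(CX))$. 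Relatedly, your sketched verification via the universal map $P = C\mathsf{F}(X)\cup_{\mathsf{F}(X)}\mathsf{F}(CX) \rightarrowtail C$ and the induced map $P \to C/\mathsf{F}(X)$ does not go through: neither triangle in the comparison diagram commutes, as one sees by restricting to $C\mathsf{F}(X) \subset P$, where one composite factors through $\ast$ and the other restricts to $r_2 \circ (C\mathsf{F}(X)\to C\mathsf{F}(X)/\mathsf{F}(X))$, which is a weak equivalence. Establishing $\tau^\mathsf{F}(X) = -\delta(\cdot)$ correctly requires essentially the argument behind Schwede's Proposition~A.8(iii), which is more delicate than the universal-map argument you propose.

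The good news is that once the sign is put in, it is the same on both sides of the target equation $(\susp\Ho(\eta))\tau^\mathsf{F} = \tau^\mathsf{G}(\Ho(\eta)\susp)$: replacing $\tau^\mathsf{F}$ by $-\delta^\mathsf{F}$ and $\tau^\mathsf{G}$ by $-\delta^\mathsf{G}$ cancels the signs, and the statement then does reduce to the square you drew, which commutes by \cref{connectmorphnat} applied to the naturality square of $\eta$ for the cofibration $X \rightarrowtail CX$. You also correctly observe that $\eta_{CX}/\eta_X = \Ho(\eta)_{\susp X}$ under the identification $\mathsf{F}(\susp X) = \mathsf{F}(CX)/\mathsf{F}(X)$ (using that $\mathsf{F}$ preserves the relevant pushout). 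So the overall strategy is salvageable, but the proof as written contains a genuine gap: the unqualified identification $\tau = \delta$, together with its proposed justification, must be replaced by $\tau = -\delta$ with a correct proof.
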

\begin{proof}
We pick cone extensions $C_\mathsf{F}$ and $C_\mathsf{G}$ of the identity on $\mathsf{F}(X)$ and $\mathsf{G}(X)$, respectively, as above. Then $\mathsf{F}(CX) \to C_\mathsf{F}$ and $\mathsf{G}(CX) \to C_\mathsf{G}$ are cones, and we pick a cone extension $C$ of $\eta(CX)$ with respect to these cones. Then we obtain a commutative diagram
\begin{equation*}
\begin{tikzcd}
\mathsf{F}(CX)/\mathsf{F}(X) \ar[r,"\sim" swap] \ar[dd,"\eta" swap] & C_\mathsf{F}/\mathsf{F}(X) \ar[d] & C \mathsf{F}(X)/\mathsf{F}(X) \ar[l,"\sim"] \ar[dl] \\
& C/\mathsf{G}(X) \\
\mathsf{G}(CX)/\mathsf{G}(X) \ar[ur,"\sim" swap] \ar[r,"\sim" swap] & C_\mathsf{G}/\mathsf{G}(X) \ar[u,"\sim" swap] & C \mathsf{G}(X)/\mathsf{G}(X) \ar[l,"\sim"] \ar[ul,"\sim"] \nospacepunct{.}
\end{tikzcd}
\end{equation*}
It remains to observe, that in $\Ho(\cat{D})$ the morphisms in the top and bottom row are $\tau^\mathsf{F}$ and $\tau^\mathsf{G}$, respectively, the morphism in the left column is $\Ho(\eta)(\susp X)$ and the morphism in the right column is $\susp \Ho(\eta)(X)$.
\end{proof}

\begin{definition} \label{biexact_cofib}
Let $\cat{C}$, $\cat{D}$, $\cat{E}$ be cofibration categories. A functor $\mathsf{F}\colon \cat{C}\times\cat{D}\to \cat{E}$ is \emph{biexact}, if it is exact in each variable and for any morphisms $f\colon X\to Y$ in $\cat{C}$ and $f' \colon X'\to Y'$ in $\cat{D}$, the induced morphism 
\begin{equation} \label{biexact_cofib:compatible}
\mathsf{F}(X,Y')\cup_{\mathsf{F}(X,X')}\mathsf{F}(Y,X')\to \mathsf{F}(Y,Y')
\end{equation}
is a cofibration provided that $f$ and $f'$ are cofibrations.
\end{definition}

A biexact functor $\mathsf{F}\colon \cat{C}\times\cat{D}\to \cat{E}$ preserves weak equivalences and thus induces a functor $\Ho(\mathsf{F})\colon \Ho(\cat{C})\times \Ho(\cat{D})\to \Ho(\cat{E})$ which coincides with $\mathsf{F}$ on objects.

\begin{theorem}\label{cofib_biexact_verdier}
Let $\mathsf{F}\colon \cat{C}\times\cat{D}\to \cat{E}$ be a biexact functor between stable cofibration categories. Then the induced functor $\Ho(\mathsf{F})$ is a biexact functor of triangulated categories and $\Ho(\mathsf{F})$ admits a strong Verdier structure.
\end{theorem}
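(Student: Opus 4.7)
The plan is to verify biexactness of the induced functor $\Ho(\mathsf{F})$ and then construct the data of a strong Verdier structure directly from pushouts in $\cat{E}$. For biexactness, fix $X \in \cat{C}$; the partial functor $\mathsf{F}(X,-)\colon \cat{D}\to\cat{E}$ is exact between stable cofibration categories, so \cref{exact_fun_cofib} yields an exact functor $\Ho(\mathsf{F})(X,-)$ on homotopy categories with a natural isomorphism $\zeta(X,-)$. Naturality of $\zeta$ in the first variable follows from \cref{exact_nat_susp} applied to the natural transformation $\mathsf{F}(X,-)\to \mathsf{F}(\tilde{X},-)$ of exact functors induced by any morphism $X\to \tilde{X}$ in $\cat{C}$. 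The isomorphism $\theta$ is constructed symmetrically, and the anti-commutativity \cref{biexact:anticommute} is verified by choosing compatible cone extensions for the identities of $\mathsf{F}(X,X')$, $\mathsf{F}(\susp X,X')$, $\mathsf{F}(X,\susp X')$, and $\mathsf{F}(\susp X,\susp X')$, and tracking the sign that arises by the same mechanism as the sign cancellation $-\id_{\susp X}$ in the proof of \cref{mayervietoris}.

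For the strong Verdier structure, using the rotational symmetry recorded after \cref{verdier_structure} together with the fact that every exact triangle is isomorphic to an elementary one, it suffices to treat exact triangles coming from cofibrations $f\colon X\rightarrowtail Y$ in $\cat{C}$ and $f'\colon X'\rightarrowtail Y'$ in $\cat{D}$, writing $Z = Y/X$ and $Z' = Y'/X'$. Define
\[
W \;\colonequals\; \mathsf{F}(Y,X')\cup_{\mathsf{F}(X,X')}\mathsf{F}(X,Y')
\]
as a pushout in $\cat{E}$. Since $f$ and $f'$ are cofibrations, the comparison $W\to \mathsf{F}(Y,Y')$ is a cofibration by \cref{biexact_cofib:compatible}. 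The three required exact triangles in \cref{biexact:exact-triangles} are then the elementary exact triangles associated to the cofibrations $\mathsf{F}(X,Y')\rightarrowtail W$, $\mathsf{F}(Y,X')\rightarrowtail W$, and $W\rightarrowtail \mathsf{F}(Y,Y')$. The pasting lemma for pushouts combined with exactness of $\mathsf{F}$ in each variable identifies the three quotients with $\mathsf{F}(Z,X')$, $\mathsf{F}(X,Z')$, and $\mathsf{F}(Z,Z')$, respectively, and the corresponding connecting morphisms are identified with $\theta\mathsf{F}(h,f')$, $\zeta\mathsf{F}(f,h')$, and $p$ via the naturality of the connecting morphism \cref{connectmorphnat} together with the cone-extension description of $\theta$ and $\zeta$.

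To complete the verification of \cref{eq:bifunctorVerdier}, the triangles (i--vi) commute by the universal property of $W$ as a pushout applied in combination with exactness of $\mathsf{F}$. Each of the squares (I--VI) is modelled in $\cat{E}$ by a pushout square of cofibrations, either by the defining pushout of $W$, by the cofibration $W\rightarrowtail \mathsf{F}(Y,Y')$, or by one of the quotient pushouts computing $\mathsf{F}(Z,X')$, $\mathsf{F}(X,Z')$, and $\mathsf{F}(Z,Z')$; \cref{mayervietoris} applied to each then yields that these squares are homotopy cartesian in $\Ho(\cat{E})$. The anti-commutativity requested in square (VI) is precisely the one from \cref{biexact:anticommute} already established in the biexactness part.

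The main obstacle will be the sign bookkeeping: precisely identifying the connecting morphisms of the three elementary exact triangles involving $W$ with $\theta\mathsf{F}(h,f')$, $\zeta\mathsf{F}(f,h')$, and $p$, and confirming that the homotopy cartesian square at (VI) carries exactly the sign prescribed by \cref{biexact:anticommute}. Once the sign conventions are aligned, the remaining checks reduce to universal properties of pushouts and to direct applications of \cref{mayervietoris}.
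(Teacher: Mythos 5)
Your overall setup — restricting to elementary exact triangles coming from cofibrations, defining $W$ as the pushout $\mathsf{F}(X,Y')\cup_{\mathsf{F}(X,X')}\mathsf{F}(Y,X')$, reading off the three exact triangles \cref{biexact:exact-triangles} from the cofibrations into and out of $W$, and invoking \cref{mayervietoris} for the squares visible as pushouts in $\cat{E}$ — matches the paper's argument. But your handling of squares (IV), (V), and (VI) contains a genuine gap. You assert that all six squares (I--VI) are ``modelled in $\cat{E}$ by a pushout square of cofibrations.'' That is true only for (I--III), which live entirely among the objects $\mathsf{F}(-,-)$ and $W$ and are literally pushouts in \cref{cofib_biexact_verdier:W} (or its transposed variant). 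The squares (IV), (V), and (VI), by contrast, involve suspended objects $\susp\mathsf{F}(X,Y')$, $\susp W$, $\susp^2\mathsf{F}(X,X')$ and connecting morphisms such as $\mathsf{F}(Z,Y')\to\susp\mathsf{F}(X,Y')$ and $p\colon\mathsf{F}(Z,Z')\to\susp W$, which are \emph{not} morphisms in $\cat{E}$ but only in $\Ho(\cat{E})$; they cannot be presented as a single pushout square of cofibrations in $\cat{E}$, and \cref{mayervietoris} does not apply to them directly. The paper instead deduces (IV) and (V) from \cref{triangulationstrong} applied to the composites of cofibrations $\mathsf{F}(X,Y')\rightarrowtail W\rightarrowtail\mathsf{F}(Y,Y')$ and $\mathsf{F}(Y,X')\rightarrowtail W\rightarrowtail\mathsf{F}(Y,Y')$; this proposition is precisely the statement about $\delta$-squares your sketch is missing. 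For (VI), the paper pastes pushouts in a cube to obtain $W/\mathsf{F}(X,X')\cong\mathsf{F}(X,Z')\sqcup\mathsf{F}(Z,X')$, producing the elementary exact triangle $\mathsf{F}(X,X')\to W\to\mathsf{F}(X,Z')\oplus\mathsf{F}(Z,X')\xrightarrow{\delta}\susp\mathsf{F}(X,X')$ whose rotation gives the required anti-commuting homotopy cartesian square with the sign coming from the rotation axiom, not from \cref{biexact:anticommute}.

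This points to a second issue in your ordering. You establish \cref{biexact:anticommute} up front via cone-extension bookkeeping and then invoke it to justify the sign in (VI). The paper does the reverse: it proves the strong Verdier structure first (with (VI)'s sign produced by rotation as above) and then recovers \cref{biexact:anticommute} as a corollary by feeding the degenerate triangles $X\to 0\to\susp X\xrightarrow{-\id}\susp X$ and $X'\to 0\to\susp X'\xrightarrow{-\id}\susp X'$ into \cref{biexact:exact-triangles}. Your order is not wrong in principle, but it front-loads exactly the sign bookkeeping you flag as the hard part, and then relies on that bookkeeping a second time in (VI); the paper's order avoids both. To repair your proof, replace the blanket appeal to \cref{mayervietoris} for (IV--VI) with an explicit use of \cref{triangulationstrong} for (IV) and (V), a cube-pasting-plus-rotation argument for (VI), and then derive \cref{biexact:anticommute} at the end from the degenerate triangles.
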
 
\begin{proof}
By \cref{exact_fun_cofib,exact_nat_susp}, we have natural isomorphisms
\begin{equation*}
\theta \colon \Ho(\mathsf{F})(\susp -,-) \to \susp \Ho(\mathsf{F})(-,-) \quad \text{and} \quad \zeta \colon \Ho(\mathsf{F})(-,\susp -) \to \susp \Ho(\mathsf{F})(-,-) .
\end{equation*}
We will show that $\Ho(\mathsf{F})$ admits a strong Verdier structure. This will imply the compatibility condition \cref{biexact:anticommute}. 

It suffices to establish the strong Verdier structure for elementary exact triangles
\[
X \xrightarrow{\gamma(f)} Y \xrightarrow{\gamma(g)} Z \xrightarrow{\delta(f)} \susp X \quad \text{and} \quad X' \xrightarrow{\gamma(f')} Y' \xrightarrow{\gamma(g')} Z' \xrightarrow{\delta(f')} \susp X'
\]
in $\Ho(\cat{C})$ and $\Ho(\cat{D})$, respectively. We make constructions in the stable cofibration category and, for readability, we use the labeling of \cref{eq:bifunctorVerdier} to refer to the corresponding squares in the cofibration category.

We set $W \coloneqq \mathsf{F}(X,Y')\cup_{\mathsf{F}(X,X')} \mathsf{F}(Y,X')$. By \cref{biexact_cofib:compatible} the morphism $W \to \mathsf{F}(Y,Y')$ is a cofibration. Using the pasting lemma repeatedly, we obtain the following diagram in which every square is a pushout square:
\begin{equation} \label{cofib_biexact_verdier:W}
\begin{tikzcd}
\mathsf{F}(X,X') \ar[r,tail] \ar[d,tail] \ar[dr,phantom,"\textrm{(I)}"] & \mathsf{F}(Y,X') \ar[d,tail] \ar[r] & \ast \ar[d,tail] \\
\mathsf{F}(X,Y') \ar[r,tail] & W \ar[r] \ar[d,tail] \ar[dr,phantom,"\textrm{(III)}"] & \mathsf{F}(X,Z') \ar[r] \ar[d,tail] & \ast \ar[d,tail] \\
& \mathsf{F}(Y,Y') \ar[r] & \mathsf{F}(Y,Z') \ar[r] & \mathsf{F}(Z,Z') \nospacepunct{.}
\end{tikzcd}
\end{equation}
This yields the elementary exact triangles 
\[
\begin{tikzcd}[row sep=0]
W \ar[r,"i"] & \mathsf{F}(Y,Y') \ar[r,dotted] & \mathsf{F}(Z,Z') \ar[r,"{p = \delta(i)}"] & \susp W \\
\mathsf{F}(Y,X') \ar[r,"j'"] & W \ar[r,"q'"] & \mathsf{F}(X,Z') \ar[r,dotted] & \susp \mathsf{F}(Y,X') \nospacepunct{,}
\end{tikzcd}
\]
and similarly, we obtain (II) and an elementary exact triangle
\[
\begin{tikzcd}
\mathsf{F}(X,Y') \ar[r,"j"] & W \ar[r,"q"] & \mathsf{F}(Z,X') \ar[r,dotted] & \susp \mathsf{F}(X,Y') \nospacepunct{.}
\end{tikzcd}
\]
The dotted arrow in the first triangle is $\gamma(\mathsf{F}(g,g'))$. In the latter two triangles the dotted arrows are $(\susp \gamma(\mathsf{F}(f,X'))) \delta(\mathsf{F}(X,f'))$ and $(\susp \gamma(\mathsf{F}(X,f'))) \delta(\mathsf{F}(f,X'))$, respectively, by applying the naturality of the connecting morphism \cref{connectmorphnat} to the defining pushout square of $W$. Since $\delta(\mathsf{F}(X,f')) = \zeta \Ho(\mathsf{F})(X,\delta(f'))$, and similarly for $\delta(\mathsf{F}(f,X'))$, these are the required triangles \cref{biexact:exact-triangles}.

We need to check, that the morphisms satisfy the compatibility conditions in \cref{eq:bifunctorVerdier}, and that the required squares are homotopy cartesian. 

By construction (i--iv) commute. The triangles (v) and (vi) commute by the naturality of the connecting morphism in \cref{connectmorphnat}. Further by \cref{cofib_biexact_verdier:W,mayervietoris} the squares (I--III) commute and are homotopy pushout squares. 

Next we show that (VI) is anti-commutative and homotopy cartesian when replacing one arrow by its negative. Pasting pushouts in the cube
\setlength{\perspective}{9pt}
\begin{equation*}
\begin{tikzcd}[row sep=1.5em, column sep=1.5em]
&[-\perspective] \mathsf{F}(Y,X') \ar{rr}\ar{dd} &[\perspective] &[-\perspective] W \ar{dd} \\[-\perspective]
\mathsf{F}(X,X') \ar[crossing over]{rr}\ar{dd}\ar{ru} & & \mathsf{F}(X,Y') \ar{ru} \\[\perspective]
& \mathsf{F}(Z,X') \ar{rr} & & \mathsf{F}(Z,X') \sqcup \mathsf{F}(X,Z') \\[-\perspective]
\ast \ar{rr}\ar{ru} && \mathsf{F}(X,Z') \ar[from=uu, crossing over]\ar{ru}
\end{tikzcd}
\end{equation*}
shows that $W/\mathsf{F}(X,X')\cong \mathsf{F}(Z,X')\sqcup \mathsf{F}(X,Z')$. By construction the composition 
\begin{equation*}
W \to \mathsf{F}(Z,X') \sqcup \mathsf{F}(X,Z') \to \mathsf{F}(Z,X') \sqcup \ast \cong \mathsf{F}(Z,X')
\end{equation*}
coincides with the morphism $W \to \mathsf{F}(Z,X')$ above; and similarly for the second summand. Thus we have an elementary exact triangle 
\begin{equation*}
\mathsf{F}(X,X')\to W\to \mathsf{F}(X,Z') \oplus \mathsf{F}(Z,X') \xrightarrow{\delta} \susp \mathsf{F}(X,X')
\end{equation*}
in $\Ho(\cat{E})$. It follows from \cref{connectmorphnat} that the connecting morphism $\delta$ restricts to $\delta(\mathsf{F}(X,f'))$ and $\delta(\mathsf{F}(f,X))$ on the summands $\mathsf{F}(X,Z')$ and $\mathsf{F}(Z,X')$, respectively. Rotating this triangle shows that (VI) anti-commutes, and it is homotopy cartesian if we replace one arrow by its negative. 

Applying \cref{triangulationstrong} to the compositions of cofibrations $\mathsf{F}(X,Y') \to W \to \mathsf{F}(Y,Y')$ and $\mathsf{F}(Y,X') \to W \to \mathsf{F}(Y,Y')$ shows that (IV) and (V) are commutative and homotopy cartesian, respectively. 

It remains to observe, that \cref{biexact:anticommute} is a consequence of \cref{biexact:exact-triangles} by using the exact triangles 
\begin{equation*}
X \to 0 \to \susp X \xrightarrow{-\id_{\susp X}} \susp X \quad \text{and} \quad X' \to 0 \to \susp X' \xrightarrow{-\id_{\susp X'}} \susp X' .
\end{equation*}
Hence $\Ho(\mathsf{F})$ is a biexact functor that admits a strong Verdier structure.
\end{proof}

\begin{remark}
Let $X \to Y \to Z \to \susp X$ and $X' \to Y' \to Z' \to \susp X'$ be exact triangles in $\Ho(\cat{C})$, and $W$ the object constructed from these triangles as in the proof of \cref{cofib_biexact_verdier}. When we replace the triangle $X \to Y \to Z \to \susp X$ by $Y \to Z \to \susp X \to \susp Y$ (with appropriate signs) in the proof of \cref{cofib_biexact_verdier}, we obtain an object $V$. Then there exist homotopy pushout squares
\[
\begin{tikzcd}
V \ar[r] \ar[d] & \susp \mathsf{F}(Y,X') \ar[d] \\
\mathsf{F}(Z,Y') \ar[r] & \susp W
\end{tikzcd}
\quad \text{and} \quad
\begin{tikzcd}
W \ar[r] \ar[d] & \mathsf{F}(Y,Y') \ar[d] \\
\mathsf{F}(X,Z') \oplus \mathsf{F}(Z,X') \ar[r] & V \nospacepunct{.}
\end{tikzcd}
\]
in $\Ho(\cat{C})$. By \cite[Theorem~4.1]{Keller/Neeman:2002}, this is also a direct consequence of the strong Verdier structure and the octahedral axiom. 

The second homotopy pushout square also appears in \cite{May:2001} as (TC4). Using \cref{verdier_rotation} below, it is straightforward to check that instead of replacing $X \to Y \to Z \to \susp X$ by $Y \to Z \to \susp X \to \susp Y$, one can obtain $\susp^{-1} V$ from $\susp^{-1} Z \to X \to Y \to Z$ and $\susp^{-1} Z' \to X' \to Y' \to Z'$.
\end{remark}

\subsection{Naturality}

In the proof of \cref{cofib_biexact_verdier} we constructed the object $W$ as a pushout in the stable cofibration category. We will show that the strong Verdier structure on the bifunctor on the homotopy categories inherits some of the naturality from the bifunctor on the stable cofibration categories. These compatibility results may be of interest for future applications, but we do not use them here.

\begin{discussion}\label{lifting_squares}
When we work in the homotopy category $\Ho(\cat{C})$ of a cofibration category $\cat{C}$, it would be helpful to lift commutative diagrams in $\Ho(\cat{C})$ to commutative diagrams in $\cat{C}$ to use the structure of $\cat{C}$. This works for commutative squares: As in the proof of (T4) in \cite[Theorem~A.12]{Schwede:2013}, any morphism in $\Ho(\cat{C})$ factors as the localization functor $\gamma\colon \cat{C} \to \Ho(\cat{C})$ applied to a cofibration followed by an isomorphism. Thus any commutative square in $\Ho(\cat{C})$ is isomorphic to a commutative square such that two parallel arrows are $\gamma$ of cofibrations. Schwede then shows in the proof of (T3) in \cite[Theorem~A.12]{Schwede:2013}, that any such commutative square in $\Ho(\cat{C})$ is isomorphic to $\gamma$ applied to a commutative square in $\cat{C}$ in which two parallel arrows are cofibrations. 
\end{discussion}

\begin{lemma} \label{verdier_functorial}
Let $\mathsf{F} \colon \cat{C} \times \cat{D} \to \cat{E}$ be a biexact functor of stable cofibration categories and let
\begin{equation*}
\begin{tikzcd}
X_1 \ar[r] \ar[d] & Y_1 \ar[d] \\
X_2 \ar[r] & Y_2
\end{tikzcd}
\quad \text{and} \quad
\begin{tikzcd}
X'_1 \ar[r] \ar[d] & Y'_1 \ar[d] \\
X'_2 \ar[r] & Y'_2
\end{tikzcd}
\end{equation*}
be commutative squares in $\Ho(\cat{C})$ and $\Ho(\cat{D})$, respectively. Then there exist homotopy pushouts $W_i$ of $\mathsf{F}(X_i,Y'_i) \leftarrow \mathsf{F}(X_i,X'_i) \rightarrow \mathsf{F}(Y_i,X'_i)$ in $\Ho(\cat{E})$ satisfying \cref{verdier_structure} and a morphism $W_1 \to W_2$ in $\Ho(\cat{E})$ that is compatible with the exact triangles \cref{biexact:exact-triangles}. 

Moreover, the cone of the morphism $W_1 \to W_2$ is isomorphic to
\begin{equation*}
\cone(\mathsf{F}(X_1,Y'_1) \to \mathsf{F}(X_2,Y'_2)) +_{\cone(\mathsf{F}(X_1,X'_1) \to \mathsf{F}(X_2,X'_2))} \cone(\mathsf{F}(Y_1,X'_1) \to \mathsf{F}(Y_2,X'_2)) .
\end{equation*}
\end{lemma}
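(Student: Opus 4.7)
The strategy is to lift both squares to the underlying cofibration categories so that the construction of $W$ from the proof of \cref{cofib_biexact_verdier} becomes functorial in the data.

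First, I would lift the commutative squares in $\Ho(\cat{C})$ and $\Ho(\cat{D})$ to commutative squares in $\cat{C}$ and $\cat{D}$, respectively, in which all four morphisms can be taken to be cofibrations. Using \cref{lifting_squares} one obtains lifts with horizontal cofibrations $X_i \rightarrowtail Y_i$ and $X'_i \rightarrowtail Y'_i$; one then replaces the remaining vertical morphism $X_1 \to X_2$ by a factorization $X_1 \rightarrowtail \tilde{X}_2 \xrightarrow{\sim} X_2$, pushes this through to the corner via $\tilde{Y}_2 \colonequals Y_1 \cup_{X_1} \tilde{X}_2$, and uses the gluing lemma to obtain a compatible cofibration $Y_1 \rightarrowtail \tilde{Y}_2$ together with a weak equivalence $\tilde{Y}_2 \xrightarrow{\sim} Y_2$. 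The same procedure applied in $\cat{D}$ yields analogous lifts.

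Next, I would apply $\mathsf{F}$ componentwise. By biexactness, all the relevant morphisms between the objects $\mathsf{F}(A,B)$ with $A \in \{X_i, Y_i\}$ and $B \in \{X'_j, Y'_j\}$ become cofibrations in $\cat{E}$; for instance $\mathsf{F}(X_1, X'_1) \to \mathsf{F}(X_2, X'_2)$ factors through $\mathsf{F}(X_2, X'_1)$ as a composition of cofibrations. Setting $W_i \colonequals \mathsf{F}(X_i,Y'_i) \cup_{\mathsf{F}(X_i,X'_i)} \mathsf{F}(Y_i,X'_i)$ places each $W_i$ in the strong Verdier structure of \cref{cofib_biexact_verdier}. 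Functoriality of the pushout produces the morphism $W_1 \to W_2$, which is itself a cofibration by iterated application of the gluing lemma. Compatibility with the exact triangles \cref{biexact:exact-triangles} then follows from the naturality of the connecting morphism in \cref{connectmorphnat} combined with the functoriality of the pushout.

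For the cone formula, since $W_1 \rightarrowtail W_2$ is a cofibration in $\cat{E}$, its cone in $\Ho(\cat{E})$ is represented by $W_2/W_1$. An iterated pasting of pushout squares (the same argument used in \cref{cofib_biexact_verdier} to compute $W$) identifies $W_2/W_1$ with the pushout in $\cat{E}$ of
\begin{equation*}
\mathsf{F}(X_2,Y'_2)/\mathsf{F}(X_1,Y'_1) \leftarrow \mathsf{F}(X_2,X'_2)/\mathsf{F}(X_1,X'_1) \rightarrow \mathsf{F}(Y_2,X'_2)/\mathsf{F}(Y_1,X'_1)\,.
\end{equation*}
By \cref{mayervietoris} this represents the homotopy pushout in $\Ho(\cat{E})$, and since each quotient realizes the cone of the corresponding morphism, this yields the claimed isomorphism. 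The main obstacle I expect is the initial Reedy-style lift: \cref{lifting_squares} on its own only supplies cofibrations in two parallel directions, and upgrading all four edges to cofibrations while preserving strict commutativity — and doing so simultaneously in both $\cat{C}$ and $\cat{D}$ — requires the careful factor-and-replace procedure described above. Once that is secured, the rest is formal naturality together with the pasting lemma for pushouts.
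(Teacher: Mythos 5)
Your first step already contains a genuine gap. You define $\tilde{Y}_2 \colonequals Y_1 \cup_{X_1} \tilde{X}_2$ and assert that the induced map $\tilde{Y}_2 \to Y_2$ is a weak equivalence by the gluing lemma. The gluing lemma compares pushouts of weakly equivalent spans, but $Y_2$ is not presented as a pushout of anything: the original commutative square in $\Ho(\cat{C})$ need not be a homotopy pushout square. Concretely, take $X_1 \rightarrowtail Y_1$ a cofibration with $Y_1/X_1$ not weakly contractible and $X_2=Y_2=\ast$. Then $\tilde{X}_2$ is a cone on $X_1$, so $\tilde{Y}_2 = Y_1 \cup_{X_1} \tilde{X}_2$ is weakly equivalent to $Y_1/X_1\not\simeq \ast = Y_2$. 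The repair is to factor the pushout comparison map $Y_1 \cup_{X_1} \tilde{X}_2 \to Y_2$ as a cofibration followed by a weak equivalence and to let $\tilde{Y}_2$ be the intermediate object, and similarly in $\cat{D}$. But even after that, asserting that $W_1 \to W_2$ is a cofibration by ``iterated application of the gluing lemma'' is not enough: making all four edges cofibrations is not the right hypothesis. What one actually needs is the Reedy-type condition that the comparison map $Y_1\cup_{X_1}\tilde{X}_2 \to \tilde{Y}_2$ (and its analogue in $\cat{D}$) is a cofibration --- which the corrected factorization does provide --- and then one must carefully check the hypotheses of \cite[Lemma~1.4.1(1)(a)]{RadulescuBanu:2009} to conclude that the induced map of pushouts $W_1\to W_2$ is a cofibration.

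The paper sidesteps both issues by being less ambitious. It lifts only the horizontal arrows to cofibrations, exactly as \cref{lifting_squares} provides, and gets $W_1 \to W_2$ from pushout functoriality; no claim that this morphism is a cofibration is needed or made. For the cone formula the paper does not compute $W_2/W_1$ (which would require the cofibration) but instead replaces the bottom row $\ast \leftarrow \ast \to \ast$ in your $3\times 3$ grid by a top row of cone extensions $C_1 \leftarrow C\mathsf{F}(X_1,X'_1) \to C_2$ of the two legs of the span defining $W_1$; the cone extension axiom already builds in the needed Reedy condition, so the $3\times 3$ diagram can be pushed out in either order. Horizontal-then-vertical realizes $\cone(W_1\to W_2)$, vertical-then-horizontal realizes the pushout of the three cones, and \cref{mayervietoris} turns the latter into a homotopy pushout. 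Your final Fubini step is the same idea, but it only becomes available once the cofibration claim is secured --- which, as written, it is not. If you carry out the corrected Reedy lift, your quotient-based version goes through, at the cost of an extra factorization in each of $\cat{C}$ and $\cat{D}$ that the paper's cone-extension argument avoids.
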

\begin{proof}
By \cref{lifting_squares}, we can assume that the squares lift to commutative squares in $\cat{C}$ and $\cat{D}$, respectively, with the horizontal morphisms being cofibrations. 
Then we obtain the morphism $W_1 \to W_2$ from the morphism of the spans. The pushout property yields morphisms of the diagrams \cref{cofib_biexact_verdier:W} and thus a morphism of triangles in $\Ho(\cat{E})$. 

For the second claim, let $C_1$ be a cone extension of $\mathsf{F}(X_1,X'_1) \to \mathsf{F}(X_1,Y'_1)$ and $C_2$ a cone extension of $\mathsf{F}(X_1,X'_1) \to \mathsf{F}(Y_1,X'_1)$. We consider the commutative diagram
\begin{equation*}
\begin{tikzcd}
C_1 & C \mathsf{F}(X_1,X'_1) \ar[l,tail] \ar[r,tail] & C_2 \\
\mathsf{F}(X_1,Y'_1) \ar[u,tail] \ar[d] & \mathsf{F}(X_1,X'_1) \ar[l,tail] \ar[r,tail] \ar[u,tail] \ar[d] & \mathsf{F}(Y_1,X'_1) \ar[u,tail] \ar[d] \\
\mathsf{F}(X_2,Y'_2) & \mathsf{F}(X_2,X'_2) \ar[l,tail] \ar[r,tail] & \mathsf{F}(Y_2,X'_2) \nospacepunct{.}
\end{tikzcd}
\end{equation*}
Whether we first take pushouts horizontally and then vertically, or the other way around, we obtain the same object. By \cite[Lemma~1.4.1(1)(a)]{RadulescuBanu:2009} these pushouts exist. In $\Ho(\cat{E})$ the former yields $\cone(W_1 \to W_2)$, and the latter the desired homotopy pushout using \cref{mayervietoris}. 
\end{proof}

One should compare the previous \cref{verdier_functorial} to \cref{htpy_pushout_same_base,htpy_pushout_map_base}, as the latter treat special cases, though in a more general setting.

Next we show that the construction of the strong Verdier structure in \cref{cofib_biexact_verdier} is invariant under rotation. Namely, if we rotate the triangle in the first component of the biexact functor, then the three exact triangles \cref{biexact:exact-triangles} from the corresponding strong Verdier structure are up to rotation canonically isomorphic to the three exact triangles obtained from the strong Verdier structure after rotating the triangle in the second component.

We will use Schwede's proof of the rotation axiom from \cite[Theorem~A.12]{Schwede:2013}.

\begin{remark}\label{rotation_stable_cof_cat}
Let $f\colon X\to Y$ be a cofibration in a stable cofibration category $\cat{C}$. Then 
\[\begin{tikzcd}
X \ar[r]\ar[d,"="] & CX\cup_X Y \ar[r]\ar[d] & \susp X \ar[r,"\delta"] \ar[d,"="] & \susp Y\ar[d,"="] \\
X \ar[r] & Y/X \ar[r] & \susp X \ar[r," -\susp (f)"] & \susp Y \nospacepunct{.}
\end{tikzcd}
\]
is an isomorphism of exact triangles. If $C_X$ is another cone for $X$ and $\overline{C}$ a cone extension for $CX$ and $C_X$, then the latter exact triangle is isomorphic to
\[\begin{tikzcd}
X\ar[r] & \ar[r] Y/X\ar[r] & C_X/X\ar[r,"\delta"] & \susp Y,
\end{tikzcd}
\]
via $\gamma(CX/X\rightarrow \overline{C}/X\leftarrow C_X/X)$ and the identity morphisms on the other objects.
\end{remark}

\begin{lemma} \label{verdier_rotation}
Let $\mathsf{F} \colon \cat{C} \times \cat{D} \to \cat{E}$ be a biexact functor of stable cofibration categories and $f\colon X \to Y$ and $f'\colon X' \to Y'$ cofibrations in $\cat{C}$ and $\cat{D}$, respectively. Let $W_1$ be the pushout of $\mathsf{F}(X,CX' \cup_{X'} Y') \leftarrow \mathsf{F}(X,Y') \to \mathsf{F}(Y,Y')$ and $W_2$ the pushout of $\mathsf{F}(CX \cup_X Y,X') \leftarrow \mathsf{F}(Y,X') \to \mathsf{F}(Y,Y')$. Then there is a canonical isomorphism $e \colon W_1 \to W_2$ in $\Ho(\cat{E})$ that yields canonical isomorphisms of the exact triangles \cref{biexact:exact-triangles}; that is for $Z = Y/X$ and $Z'=Y'/X'$ the following diagrams commute
\begin{equation} \label{verdier_rotation:j}
\begin{tikzcd}[column sep=large]
\mathsf{F}(Y,Y') \ar[r,"j'_1"] \ar[d,"="] & W_1 \ar[r,"q'_1"] \ar[d,"e" swap,"\cong"] & \mathsf{F}(X,\susp X') \ar[r,"{\zeta \mathsf{F}(f,\susp f')}"] \ar[d,"\cong"] & \susp \mathsf{F}(Y,Y') \ar[d,"="] \\
\mathsf{F}(Y,Y') \ar[r,"j_2"] & W_2 \ar[r,"q_2"] & \mathsf{F}(\susp X, X') \ar[r,"{\theta \mathsf{F}(\susp f,f')}"] & \susp \mathsf{F}(Y,Y') \nospacepunct{,}
\end{tikzcd}
\end{equation}
\begin{equation} \label{verdier_rotation:ij}
\begin{tikzcd}[column sep=large]
W_1 \ar[r,"i_1"] \ar[d,"e" swap,"\cong"] & \mathsf{F}(Y,Z') \ar[r,"{\mathsf{F}(g,h')}"] \ar[d,"="] & \mathsf{F}(Z,\susp X') \ar[r,"p_1"] \ar[d,"\zeta" swap,"\cong"] & \susp W_1 \ar[d,"\susp e" swap,"\cong"] \\
W_2 \ar[r,"q'_2"] & \mathsf{F}(Y,Z') \ar[r,"{\zeta\mathsf{F}(g,h')}"] & \susp \mathsf{F}(Z,X') \ar[r,"{-\susp j'_2}"] & \susp W_2
\end{tikzcd}
\end{equation}
and analogously, there is a third isomorphism of exact triangles.
\end{lemma}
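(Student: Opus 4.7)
The plan is to realize both $W_1$ and $W_2$ as the cone of $\mathsf{F}(f, f')$ via their respective Verdier triangles, and to define $e$ as the canonical comparison of these two cone descriptions.

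By \cref{lifting_squares}, I may represent $f \colon X \to Y$ and $f' \colon X' \to Y'$ by cofibrations in $\cat{C}$ and $\cat{D}$. Following the construction in the proof of \cref{cofib_biexact_verdier}, both $W_1$ and $W_2$ are then actual pushouts in $\cat{E}$, and the exact triangles of \cref{biexact:exact-triangles} for each arise as elementary exact triangles from these pushouts. The crucial additional input is the rotation axiom: by \cref{rotation_stable_cof_cat}, the connecting morphism of the cofibration $Y' \to CX' \cup_{X'} Y'$ in $\Ho(\cat{D})$ is $-\susp f'$, and similarly for $Y \to CX \cup_X Y$ in $\Ho(\cat{C})$.

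Applying the Verdier-structure construction of \cref{cofib_biexact_verdier} to each pair of rotated triangles, one computes that the connecting morphism of the triangle $\mathsf{F}(Y, Y') \xrightarrow{j'_1} W_1 \xrightarrow{q'_1} \mathsf{F}(X, \susp X') \to \susp \mathsf{F}(Y, Y')$ equals $\zeta \circ \mathsf{F}(f, -\susp f') = -\susp \mathsf{F}(f, f') \circ \zeta_{X, X'}$, using the naturality of $\zeta$. The analogous first triangle for $W_2$ has connecting morphism $-\susp \mathsf{F}(f, f') \circ \theta_{X, X'}$. Hence both $W_1$ and $W_2$ are canonically realized as the cone of $\mathsf{F}(f, f')$, via $\zeta$ and $\theta$ respectively. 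I take $e \colon W_1 \to W_2$ to be the induced comparison, which makes \cref{verdier_rotation:j} commute with right vertical isomorphism $\theta^{-1}\zeta$.

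For \cref{verdier_rotation:ij}, I trace both $i_1 \colon W_1 \to \mathsf{F}(Y, Z')$ and $q'_2 \colon W_2 \to \mathsf{F}(Y, Z')$ through the cone identifications; each corresponds to the natural morphism $\cone(\mathsf{F}(f, f')) \to \mathsf{F}(Y, Z')$ induced by the quotient $Y' \to Z'$, so they agree under $e$. The sign $-\susp j'_2$ in the bottom right is produced by the anti-commutativity of \cref{biexact:anticommute}, in the same way that the sign on square (VI) of the Verdier structure arose in the proof of \cref{cofib_biexact_verdier}. The third analogous diagram follows by a symmetric argument. The main obstacle is the coherent tracking of signs throughout: how the rotation sign from \cref{rotation_stable_cof_cat}, the anti-commutativity of \cref{biexact:anticommute}, and the natural isomorphisms $\theta, \zeta$ combine to produce the precise signs in all three compatibility diagrams; this sign analysis parallels the one for square (VI) in the proof of \cref{cofib_biexact_verdier}.
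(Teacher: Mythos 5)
Your central idea — realize both $W_1$ and $W_2$ as cones of $\mathsf{F}(f,f')$ and take $e$ to be the comparison — does not produce a \emph{canonical} isomorphism, and that breaks the argument at the second diagram. In a triangulated category the cone of a morphism is unique only up to non-canonical isomorphism: given the two exact triangles $\mathsf{F}(X,X')\to\mathsf{F}(Y,Y')\to W_1\to\susp\mathsf{F}(X,X')$ and $\mathsf{F}(X,X')\to\mathsf{F}(Y,Y')\to W_2\to\susp\mathsf{F}(X,X')$ (after rotating and transporting across $\zeta,\theta$), TR3 produces \emph{some} isomorphism $e$ making \cref{verdier_rotation:j} commute, but that isomorphism is not determined, and nothing forces it to respect $i_1$ and $q'_2$. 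Your appeal to ``the natural morphism $\cone(\mathsf{F}(f,f'))\to\mathsf{F}(Y,Z')$ induced by the quotient $Y'\to Z'$'' presupposes that the cone assignment is functorial, which it is not in a bare triangulated category. So the crucial compatibility in \cref{verdier_rotation:ij} is asserted but not established.

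The fix, and the route the paper actually takes, is to build $e$ inside the cofibration category, where $W_1$ and $W_2$ are honest pushouts. Using the pasting lemma, both can be rewritten as pushouts over $\mathsf{F}(X,X')$: $W_1\cong\mathsf{F}(X,CX')\cup_{\mathsf{F}(X,X')}\mathsf{F}(Y,Y')$ and $W_2\cong\mathsf{F}(CX,X')\cup_{\mathsf{F}(X,X')}\mathsf{F}(Y,Y')$, and one then maps both into the intermediate pushout $\mathsf{F}(CX,CX')\cup_{\mathsf{F}(X,X')}\mathsf{F}(Y,Y')$ by acyclic cofibrations. Defining $e$ as this zig-zag of weak equivalences gives a concrete, canonical isomorphism. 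The commutativity of \cref{verdier_rotation:j} is then a diagram chase in the cofibration category; for \cref{verdier_rotation:ij} one compares the cofibrations $W_1\to\mathsf{F}(Y,Z')$-type morphisms through the intermediate object and invokes the naturality of the connecting morphism (\cref{connectmorphnat}) together with \cref{rotation_stable_cof_cat} to identify the last maps, which is where the sign $-\susp j'_2$ appears. Your sign analysis is in the right spirit, but it cannot be carried out at the abstract triangulated level alone; the concrete zig-zag is what makes the morphism $e$ well-defined and the verification of all three triangle isomorphisms possible.
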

\begin{proof}
Using the pasting lemma we can write $W_1$ and $W_2$ as pushouts over $\mathsf{F}(X,X')$. Then we obtain the zig-zag of weak equivalences
\begin{equation*}
\begin{tikzcd}[row sep=small]
W_1 \ar[r,phantom,"="] &[-2em] \mathsf{F}(X,CX') \cup_{\mathsf{F}(X,X')} \mathsf{F}(Y,Y')\ar[d,"{\sim}" ] \\
& \mathsf{F}(CX,CX') \cup_{\mathsf{F}(X,X')} \mathsf{F}(Y,Y') \\
W_2 \ar[r,phantom,"="] & \mathsf{F}(CX,X') \cup_{\mathsf{F}(X,X')} \mathsf{F}(Y,Y') \ar[u,"{\sim}" swap]  
\end{tikzcd}
\end{equation*}
and we let $e\colon W_1\to W_2$ be the corresponding isomorphism in the homotopy category.
By a diagram chase one obtains commutativity of \cref{verdier_rotation:j}. 

For \cref{verdier_rotation:ij} we consider the commutative diagram
\begin{equation*}
\begin{tikzcd}
W_1 \ar[r,phantom,"="] &[-2em] \mathsf{F}(X,CX') \cup_{\mathsf{F}(X,X')} \mathsf{F}(Y,Y') \ar[r,tail] \ar[d,"\sim"] & \mathsf{F}(Y,CX') \cup_{\mathsf{F}(Y,X')} \mathsf{F}(Y,Y') \ar[d,"\sim"] \\
 W \ar[r,phantom,"\coloneqq"] & \mathsf{F}(CX,CX') \cup_{\mathsf{F}(X,X')} \mathsf{F}(Y,Y') \ar[r,tail] & \mathsf{F}(CX \cup_X Y,CX') \cup_{\mathsf{F}(Y,X')} \mathsf{F}(Y,Y') \\
W_2 \ar[r,phantom,"="] & \mathsf{F}(CX,X') \cup_{\mathsf{F}(X,X')} \mathsf{F}(Y,Y') \ar[r,tail] \ar[u,"\sim" swap] & \mathsf{F}(CX \cup_X Y,CX') \cup_{\mathsf{F}(Y,X')} \mathsf{F}(Y,Y')\ar[u,"=" swap] \nospacepunct{.} \\
\end{tikzcd}
\end{equation*}
Note that the horizontal arrow in the middle is a cofibration using that $\mathsf{F}$ is biexact and \cite[Lemma~1.4.1(1)(a)]{RadulescuBanu:2009}. The morphism $W_2\to W$ is a pushout of the cofibration $\mathsf{F}(CX, X')\to \mathsf{F}(CX, CX')$ and thus a cofibration as well. Hence their composite, the bottom horizontal arrow, is a cofibration. We obtain a commutative diagram of connecting morphisms
\begin{equation*}
\begin{tikzcd}
\mathsf{F}(Y/X, CX'/X') \ar[r,"\delta"]\ar[d,"="] & \susp W_1\ar[d] \\
\mathsf{F}(Y/X, CX'/X') \ar[r,"\delta"] & \susp W \\
\mathsf{F}(CX \cup_X Y, CX'/X') \ar[r, "\delta"] \ar[u] & \susp W_2\ar[u]
\end{tikzcd}
\end{equation*}
in $\Ho(\cat{E})$ and isomorphisms of three exact triangles. Since $\mathsf{F}(CX\cup_X Y, CX')$ is a cone for $\mathsf{F}(CX\cup_X Y, X')$ we can use \cref{rotation_stable_cof_cat} to obtain an isomorphism from the bottom exact triangle to the exact triangle
\begin{equation*}
\begin{tikzcd}
W_2\ar[r] & \mathsf{F}(Y, Y'/X') \ar[r] &\susp \mathsf{F}(CX\cup_X Y, X')\ar[r] & \susp W_2 ,
\end{tikzcd}
\end{equation*}
in which the connecting morphism is $-\susp (\mathsf{F}(CX\cup_X Y, X') \to W_2)$ and the isomorphism on the third object is $\zeta$. Finally, using the weak equivalences $CX \cup_X Y\to Z$ and $CX'\cup_X' Y'\to Z'$ we obtain the commutative diagram \cref{verdier_rotation:ij}.
\end{proof}

\subsection{Quillen bifunctors}

Recall that a bifunctor $\mathsf{F}\colon \cat{L}\times \cat{M}\to\cat{N}$ between model categories is a \emph{Quillen bifunctor} if it has a right adjoint in each variable and it satisfies the pushout-product axiom: For any cofibrations $f\colon X\to Y$ in $\cat{L}$ and $f'\colon X'\to Y'$ in $\cat{M}$, the induced morphism
\begin{equation*}
\mathsf{F}(X,Y')\cup_{\mathsf{F}(X,X')}\mathsf{F}(Y,X')\to \mathsf{F}(Y,Y')
\end{equation*}
is a cofibration which is an acyclic cofibration if $f$ or $f'$ is an acyclic cofibration. It follows that $\mathsf{F}$ has a total left derived functor $\mathrm{L}\mathsf{F}\colon \Ho(\cat{L})\times \Ho(\cat{M})\to \Ho(\cat{N})$; see \cite[Proposition~4.3.1]{Hovey:1999}. For cofibrant objects $X$ of $\cat{L}$ and $Y$ of $\cat{M}$ it can be computed by $\mathrm{L}\mathsf{F}(X,Y) = \mathsf{F}(X,Y)$.

\begin{remark}\label{cyclicshifts}
The condition that $\mathsf{F}\colon \cat{L}\times \cat{M}\to\cat{N}$ has a right adjoint in each variable implies, that the right adjoints uniquely extend to bifunctors $\operatorname{hom}^\ell \colon \op{\cat{L}}\times \cat{N}\to \cat{M}$ and $\operatorname{hom}^r \colon \op{\cat{M}}\times\cat{N}\to \cat{L}$ such that the isomorphisms
\[
\cat{L}(X,\homr{Y}{Z}) \cong \cat{N}(\mathsf{F}(X,Y),Z) \cong \cat{M}(Y,\homl{X}{Z}) .
\]
are natural in $X$, $Y$ and $Z$; see \cite[IV.7,Theorem~3]{MacLane:1998}. This is called an \emph{adjunction of two variables}; see \cite[Definition~4.1.12]{Hovey:1999}. 

If $\mathsf{F}$ is a Quillen bifunctor, then so are the cyclic shifts
\begin{alignat*}{2}
\op{\cat{N}}\times \cat{L} &\to \op{\cat{M}}  ,\quad& (Z,X) &\mapsto \homl{X}{Z} , \quad \text{and} \\
\cat{M}\times\op{\cat{N}} &\to \op{\cat{L}}  ,& (Y,Z) &\mapsto \homr{Y}{Z} ;
\end{alignat*}
see \cite[Lemma~4.2.2]{Hovey:1999}. Moreover, the adjunction of two variables induces an adjunction of two variables $(\mathrm{L}\mathsf{F},\operatorname{Rhom}^\ell,\operatorname{Rhom}^r)$ on homotopy categories; see \cite[Proposition~4.3.1]{Hovey:1999}.
\end{remark}

\begin{lemma} \label{Quillenbifunctorrestricts}
Let $\mathsf{F}\colon \cat{L}\times \cat{M}\to\cat{N}$ be a Quillen bifunctor between model categories. Then $\mathsf{F}$ restricts to a biexact functor of the corresponding categories of cofibrant objects $\mathsf{F}\colon \cat{L}_c\times \cat{M}_c\to\cat{N}_c$.
\end{lemma}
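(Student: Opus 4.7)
The plan is to verify the three ingredients of \cref{biexact_cofib} directly from the two parts of the definition of a Quillen bifunctor. The key observation is that since $\mathsf{F}$ has a right adjoint in each variable, both $\mathsf{F}(X,-)$ and $\mathsf{F}(-,Y)$ are left adjoints and so preserve colimits; in particular one has $\mathsf{F}(\emptyset_\cat{L},-) = \emptyset_\cat{N} = \mathsf{F}(-,\emptyset_\cat{M})$.

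First I would check that the restriction lands in $\cat{N}_c$. Given cofibrant $X \in \cat{L}$ and $Y \in \cat{M}$, applying the pushout-product axiom to the cofibrations $\emptyset \to X$ and $\emptyset \to Y$ yields a cofibration
\begin{equation*}
\emptyset = \mathsf{F}(\emptyset,Y)\cup_{\mathsf{F}(\emptyset,\emptyset)}\mathsf{F}(X,\emptyset) \to \mathsf{F}(X,Y)\,,
\end{equation*}
so $\mathsf{F}(X,Y)$ is cofibrant in $\cat{N}$.

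Next I would verify that $\mathsf{F}(X,-)\colon \cat{M}_c \to \cat{N}_c$ is exact for each cofibrant $X$; the other variable is symmetric. Being a left adjoint, it preserves initial objects and pushouts. Applying the pushout-product axiom to the cofibration $\emptyset \to X$ and an (acyclic) cofibration $f'\colon X' \to Y'$ in $\cat{M}$, the left-hand pushout collapses and we find that $\mathsf{F}(X,f')$ is an (acyclic) cofibration in $\cat{N}$. Ken Brown's lemma then upgrades preservation of acyclic cofibrations between cofibrant objects to preservation of all weak equivalences between cofibrant objects, completing exactness.

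Finally, condition \cref{biexact_cofib:compatible} for cofibrations $X \to Y$ in $\cat{L}_c$ and $X' \to Y'$ in $\cat{M}_c$ is literally the cofibration half of the pushout-product axiom, so nothing further is required. The argument is essentially a dictionary between the pushout-product axiom plus the two-sided adjunction and the axioms for a biexact functor of cofibration categories, so there is no serious obstacle; the only non-formal input is Ken Brown's lemma.
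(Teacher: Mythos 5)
Your argument is correct and is essentially the same as the paper's, just unpacked. The paper compresses your steps (2) and (3) into the observation that $\mathsf{F}(X,-)$ is a left Quillen functor for cofibrant $X$ and hence restricts to an exact functor of cofibration categories, together with the remark that the pushout-product axiom gives the remaining condition of \cref{biexact_cofib}; your spelled-out verification via the two-variable adjunction, the degenerate cases of the pushout-product axiom, and Ken Brown's lemma is exactly what underlies that shorthand.
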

\begin{proof} For any cofibrant object $X$ in $\cat{L}$, the functor $\mathsf{F}(X,-)\colon \cat{M}\to \cat{N}$ is a left Quillen functor and thus restricts to an exact functor of cofibration categories $\mathsf{F}(X,-)\colon \cat{M}_c\to \cat{N}_c$. Similarly, $\mathsf{F}(-,Y)\colon \cat{L}_c\to \cat{N}_c$ is an exact functor of cofibration categories for any cofibrant object $Y$ of $\cat{M}$. It now follows from the pushout-product axiom that $\mathsf{F}\colon \cat{L}_c\times \cat{M}_c\to \cat{N}_c$ is a biexact functor.
\end{proof}

Recall that for any pointed model category $\cat{M}$ the homotopy category is equipped with a suspension functor $\susp\colon \Ho(\cat{M})\to \Ho(\cat{M})$. Moreover, $\cat{M}$ is called \emph{stable} if $\susp$ is an equivalence. This is equivalent to the condition that the category of cofibrant objects $\cat{M}_c$ is stable, as the suspensions commute with the equivalence $\Ho(\cat{M}_c)\to \Ho(\cat{M})$. If $\cat{M}$ is stable, then we use this equivalence to equip $\Ho(\cat{M})$ with the triangulated structure coming from $\Ho(\cat{M}_c)$. This agrees with the triangulated structure on $\Ho(\cat{M})$ from \cite[\S7]{Hovey:1999}; see the proof of \cite[Proposition~6.3.4]{Hovey:1999} to verify that the connecting morphisms coincide.

The triangulated structure on $\Ho(\cat{M})$ induces a triangulated structure on its opposite category $\op{\Ho(\cat{M})}$. Alternatively, we can equip $\op{\Ho(\cat{M})}\cong \Ho(\op{\cat{M}})$ with the triangulated structure coming from $\Ho((\op{\cat{M}})_c)$. The cofibrant objects in $\op{\cat{M}}$ are the fibrant objects in $\cat{M}$. These two triangulated structures on $\op{\Ho(\cat{M})}$ agree by \cite[Theorem~7.1.11]{Hovey:1999} with our convention for the triangulated structure on the opposite of a triangulated category.

\begin{corollary}\label{Quillenbifunctorinducesbiexact}
Let $\mathsf{F}\colon \cat{L}\times \cat{M}\to\cat{N}$ be a Quillen bifunctor between stable model categories. Then the total left derived functor $\mathrm{L}\mathsf{F}\colon \Ho(\cat{L})\times \Ho(\cat{M})\to \Ho(\cat{N})$ is biexact and admits a strong Verdier structure. Moreover, the functors $\operatorname{Rhom}^\ell$ and $\operatorname{Rhom}^r$ are biexact and admit strong Verdier structures.
\end{corollary}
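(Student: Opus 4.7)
The plan is to reduce everything to \cref{cofib_biexact_verdier} by restricting to cofibrant objects. First, by \cref{Quillenbifunctorrestricts}, the Quillen bifunctor $\mathsf{F}$ restricts to a biexact functor of stable cofibration categories $\mathsf{F}\colon \cat{L}_c\times \cat{M}_c \to \cat{N}_c$. Applying \cref{cofib_biexact_verdier} then yields a biexact functor $\Ho(\mathsf{F})\colon \Ho(\cat{L}_c)\times \Ho(\cat{M}_c) \to \Ho(\cat{N}_c)$ admitting a strong Verdier structure. The localizations $\Ho(\cat{L}_c)\to \Ho(\cat{L})$, $\Ho(\cat{M}_c)\to \Ho(\cat{M})$ and $\Ho(\cat{N}_c)\to \Ho(\cat{N})$ are equivalences of triangulated categories (by the convention explained in the paragraph preceding the statement), and since $\mathrm{L}\mathsf{F}(X,Y) = \mathsf{F}(X,Y)$ for cofibrant $X$ and $Y$, the total left derived functor $\mathrm{L}\mathsf{F}$ is identified with $\Ho(\mathsf{F})$ under these equivalences. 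It follows that $\mathrm{L}\mathsf{F}$ is biexact and admits a strong Verdier structure.

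For $\operatorname{Rhom}^\ell$ and $\operatorname{Rhom}^r$, I would apply the same argument to the cyclic shifts recalled in \cref{cyclicshifts}, which are again Quillen bifunctors. Explicitly, the Quillen bifunctor $\op{\cat{N}}\times \cat{L} \to \op{\cat{M}}$ sending $(Z,X) \mapsto \homl{X}{Z}$ restricts to a biexact functor of stable cofibration categories on cofibrant objects (i.e.\@ on fibrant objects of $\cat{N}$ and $\cat{M}$ and cofibrant objects of $\cat{L}$), and its induced functor on homotopy categories is biexact and admits a strong Verdier structure by \cref{cofib_biexact_verdier}. Under the identifications $\Ho(\op{\cat{N}})\cong \op{\Ho(\cat{N})}$ and $\Ho(\op{\cat{M}})\cong \op{\Ho(\cat{M})}$, which are compatible with the adopted triangulated structures on opposite categories (by \cite[Theorem~7.1.11]{Hovey:1999}, as recalled above), this induced functor coincides with $\operatorname{Rhom}^\ell$. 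The argument for $\operatorname{Rhom}^r$ is entirely analogous, starting from the cyclic shift $\cat{M}\times \op{\cat{N}}\to \op{\cat{L}}$.

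The main obstacle is merely bookkeeping: one must verify that the triangulated structure produced by \cref{cofib_biexact_verdier} on the homotopy category of cofibrant objects agrees, under the equivalence to $\Ho(\cat{L})$ etc., with the one used to define $\mathrm{L}\mathsf{F}$, and that the identifications $\Ho(\op{\cat{M}})\cong \op{\Ho(\cat{M})}$ identify the two strong Verdier structures. Both compatibilities are consequences of the results on opposite and cofibrant replacement already invoked in the preceding discussion, so no genuinely new argument is required.
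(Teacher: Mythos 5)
Your proposal is correct and follows essentially the same approach as the paper: the paper's proof is a one-line citation of \cref{Quillenbifunctorrestricts} and \cref{cofib_biexact_verdier}, and you have simply spelled out the restriction to cofibrant objects, the identification of $\mathrm{L}\mathsf{F}$ with $\Ho(\mathsf{F})$, and the passage via the cyclic shifts from \cref{cyclicshifts} for the internal hom functors—exactly the implicit content behind that citation, with the compatibility of triangulated structures on opposite and cofibrant-replacement categories handled by the discussion preceding the corollary, just as you note.
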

\begin{proof}
This follows from \cref{cofib_biexact_verdier} and \cref{Quillenbifunctorrestricts}.
\end{proof}

\subsection{Monoidal cofibration categories} \label{monoidal_cofib_cat}

Monoidal products on triangulated categories are essential examples of biexact functors. In \cref{tt_cats} we see that many tensor triangulated categories arise as homotopy categories of stable, monoidal model categories. The unit of the monoidal product on the model category is often not a cofibrant object, so that the cofibrant replacement of the unit takes the role of the unit in the corresponding cofibration category. However, the cofibrant replacement of the unit need not satisfy the same properties as the unit. This motivates the following definition.

\begin{discussion}
A \emph{(weakly unital) monoidal cofibration category} $(\cat{C},\otimes,\unit)$ consists of a cofibration category $\cat{C}$, an object $\unit$ and a biexact functor $- \otimes - \colon \cat{C} \times \cat{C} \to \cat{C}$ with a natural isomorphism
\begin{equation*}
\alpha \colon X \otimes (Y \otimes Z) \to (X \otimes Y) \otimes Z
\end{equation*}
and natural weak equivalences
\begin{equation*}
\lambda \colon \unit \otimes X \to X \quad \text{and} \quad \rho \colon X \otimes \unit \to X
\end{equation*}
satisfying the following coherence axioms: The two canonical ways to compose the natural transformations along
\begin{equation} \label{monoidal_cofib_coherence}
\begin{gathered}
((W \otimes X) \otimes Y) \otimes Z \to W \otimes (X \otimes (Y \otimes Z))  ,\quad (\unit \otimes Y) \otimes Z \to Y \otimes Z  , \\
(X \otimes \unit) \otimes Z \to X \otimes Z, \quad \text{and} \quad (X \otimes Y) \otimes \unit \to X \otimes Y
\end{gathered}
\end{equation}
are equal.

A monoidal cofibration category is \emph{symmetric}, if it is equipped with a natural isomorphism
\begin{equation*}
\sigma \colon X \otimes Y \to Y \otimes X
\end{equation*}
satisfying the following coherence axioms: The two canonical ways to compose the natural transformations along
\begin{equation*}
X \otimes \unit \to X \quad \text{and}\quad (X \otimes Y) \otimes Z \to Z \otimes (X \otimes Y)
\end{equation*}
are equal and $\sigma^2 = \id$. 

A \emph{(left) action} of a monoidal cofibration category $(\cat{C},\otimes,\unit)$ on a cofibration category $\cat{D}$ consists of a biexact functor $\mathsf{F} \colon \cat{C} \times \cat{D}\to \cat{D}$ with a natural isomorphism
\begin{equation*}
\mathsf{F}(X,\mathsf{F}(Y,Z)) \to \mathsf{F}(X \otimes Y,Z)
\end{equation*}
and a natural weak equivalence
\begin{equation*}
\mathsf{F}(\unit, X)\to X
\end{equation*}
satisfying coherence axioms analogous to a monoidal cofibration category \cref{monoidal_cofib_coherence}, except for the last one.
\end{discussion}

\begin{remark}
A monoidal cofibration category $(\cat{C},\otimes,\unit)$ is in general \emph{not} a monoidal category since we require $\lambda$ and $\rho$ to be weak equivalences instead of isomorphisms. As a consequence, the axioms \cref{monoidal_cofib_coherence} are not superfluous, in contrast to the case of a monoidal category; cf.\@ \cite{Kelly:1964}. Moreover, when $\lambda$ and $\rho$ are isomorphisms, the coherence axioms \cref{monoidal_cofib_coherence} imply further coherence. The same need not hold when $\lambda$ and $\rho$ are weak equivalences. Explicitly, the identities $\lambda(\unit) = \rho(\unit)$ or $\sigma(\unit,\unit) = \id_{\unit \otimes \unit}$ need not be satisfied in a (symmetric) monoidal cofibration category.
\end{remark}

\begin{example}
A (symmetric) monoidal model category $(\cat{M},\otimes,\unit)$ consists of a model category $\cat{M}$ and a (symmetric) monoidal structure $(\otimes,\unit)$ on $\cat{M}$ such that $\otimes$ is a Quillen bifunctor and the \emph{unit axiom} holds: For any cofibrant replacement $Q \unit \to \unit$ and any cofibrant object $X$ the morphisms
\begin{equation*}
Q \unit \otimes X \to \unit \otimes X \quad \text{and} \quad X \otimes Q \unit \to X \otimes \unit
\end{equation*}
are weak equivalences.

If $\cat{M}$ is a (symmetric) monoidal model category, then its category of cofibrant objects $\cat{M}_c$ together with the restriction of the tensor product and a cofibrant replacement of the unit of $\cat{M}$ is a (symmetric) monoidal cofibration category.

Let $\cat{M}$ be a monoidal model category and $\cat{N}$ a (left) $\cat{M}$-model category; see \cite[Definition~4.2.18]{Hovey:1999}. Then the restriction of the action $\cat{M}\times \cat{N}\to \cat{N}$ to the cofibrant objects together with a cofibrant replacement of the tensor unit of $\cat{M}$ defines an action of the monoidal cofibration category $\cat{M}_c$ on the cofibration category $\cat{N}_c$.
\end{example}

\begin{theorem}\label{htpy_category_of_monoidal_cofcat}
Let $(\cat{C},\otimes,\unit)$ be a (symmetric) monoidal stable cofibration category. Then $(\Ho(\cat{C}),\Ho(\otimes),\gamma(\unit))$ is a (symmetric) monoidal triangulated category, and the induced monoidal product admits a strong Verdier structure.
\end{theorem}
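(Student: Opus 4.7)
The plan is to invoke \cref{cofib_biexact_verdier} applied to the biexact functor $-\otimes-\colon \cat{C}\times\cat{C}\to\cat{C}$. This immediately yields that $\Ho(\otimes)\colon \Ho(\cat{C})\times\Ho(\cat{C})\to\Ho(\cat{C})$ is a biexact functor of triangulated categories and admits a strong Verdier structure. Since $\Ho(\otimes)$ agrees with $\otimes$ on objects, we can take $\gamma(\unit)$ as the unit object. So the substantive remaining task is to produce the associator, the left and right unitors, and (in the symmetric case) the braiding on $\Ho(\cat{C})$ and verify the coherence axioms of a monoidal triangulated category in the sense of \cref{monoidal_triangulated}.

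For the structural morphisms, the associator $\alpha$ of $(\cat{C},\otimes,\unit)$ is a natural isomorphism and so $\gamma\alpha$ is a natural isomorphism on $\Ho(\cat{C})$. The unitors $\lambda$ and $\rho$ are only natural weak equivalences in $\cat{C}$, but by the universal property of the localization $\gamma\colon \cat{C}\to \Ho(\cat{C})$ they become natural isomorphisms on $\Ho(\cat{C})$. In the symmetric case, the braiding $\sigma$ is already a natural isomorphism in $\cat{C}$ and descends directly. We next check that these natural transformations are compatible with the triangulated structure, that is, they are natural transformations of exact functors in each variable. This is an application of \cref{exact_nat_susp} to each slot of the bifunctor, using that for each fixed variable the functors involved (like $-\otimes(Y\otimes Z)$ and $(-\otimes Y)\otimes Z$) are exact functors between stable cofibration categories.

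It remains to verify the coherence diagrams. Here the point is that every coherence diagram required of a monoidal triangulated category lifts to a diagram in $\cat{C}$ built from the structural natural transformations $\alpha,\lambda,\rho$ (and $\sigma$ in the symmetric case). The pentagon in $\Ho(\cat{C})$ is the image under $\gamma$ of the pentagon in $\cat{C}$, which is the first axiom in \cref{monoidal_cofib_coherence}, and the three triangle identities in $\Ho(\cat{C})$ are the images under $\gamma$ of the remaining three axioms. Once the unitors are isomorphisms, the pentagon together with the middle triangle already imply all other unit coherences by Kelly's theorem \cite{Kelly:1964}, so listing all three triangles is in fact more than sufficient. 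For the symmetric case, the identities $\sigma^2=\id$, the hexagon, and the compatibility $\rho=\lambda\circ\sigma$ (suitably interpreted) are again images under $\gamma$ of the stated coherence axioms for a symmetric monoidal cofibration category, and therefore commute in $\Ho(\cat{C})$.

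The main conceptual point—and the only nontrivial ingredient—is the strong Verdier structure, which is delivered by \cref{cofib_biexact_verdier}; everything else is bookkeeping around the fact that diagrams commuting in $\cat{C}$ remain commutative after applying the localization functor $\gamma$.
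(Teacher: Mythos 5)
Your proposal matches the paper's proof: both invoke \cref{cofib_biexact_verdier} to obtain biexactness and the strong Verdier structure for $\Ho(\otimes)$, and both use \cref{exact_nat_susp} to verify that $\alpha$, $\lambda$, $\rho$ (and $\sigma$) descend to natural isomorphisms of exact functors on $\Ho(\cat{C})$. The paper leaves the coherence bookkeeping implicit in the phrase ``These provide the (symmetric) monoidal structure,'' whereas you spell out that the required diagrams are $\gamma$-images of the diagrams postulated in the definition of a monoidal cofibration category, which is exactly what makes the tacit step legitimate.
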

\begin{proof}
By \cref{cofib_biexact_verdier}, the functor $\Ho(\otimes)$ is biexact and admits a strong Verdier structure. Using \cref{exact_nat_susp}, the natural isomorphism $\alpha$, the natural weak equivalences $\lambda$, $\rho$, and if available the symmetry $\sigma$ induce natural isomorphisms of exact functors in each variable on the homotopy category. These provide the (symmetric) monoidal structure for $(\Ho(\cat{C}),\Ho(\otimes),\gamma(\unit))$.
\end{proof}

\begin{discussion}\label{symmetricmonoidalclosed}
Let $(\cat{T},\otimes,\unit)$ by a tensor triangulated category. We assume $X \otimes -$ has a right adjoint $\hom{X}{-}$ for any $X \in \cat{T}$. Then $\operatorname{hom} \colon \op{\cat{T}} \times \cat{T} \to \cat{T}$ is a bifunctor, called \emph{internal hom functor}. There are induced natural isomorphisms $\theta$ and $\zeta$ as in \cref{biexact} satisfying \cref{biexact:anticommute} and $\operatorname{hom}$ is exact in the second variable. We say $(\cat{T},\otimes,\unit)$ is \emph{closed}, if $\operatorname{hom}$ is also exact in the first variable; see \cite[Definition~A.2.1]{Hovey/Palmieri/Strickland:1997}.
\end{discussion}

The homotopy category of a symmetric monoidal stable model category is a closed tensor triangulated category by \cite[Theorem~6.6.4]{Hovey:1999} together with Cisinski's proof of \cite[Conjecture~5.7.5]{Hovey:1999} in \cite[Corollaire~6.8]{Cisinski:2008}. We provide a new proof that does not rely on \cite[Conjecture~5.7.5]{Hovey:1999}. In addition, we establish strong Verdier structures for the monoidal product and the internal hom on $\Ho(\cat{M})$.

\begin{theorem}\label{htpy_category_of_monoidal_modelcat}
Let $(\cat{M},\otimes,\unit)$ be a symmetric monoidal stable model category. Then $(\Ho(\cat{M}),\Ho(\otimes),\gamma(\unit))$ is a closed symmetric monoidal triangulated category, and the induced monoidal product and internal hom functor admit a strong Verdier structure.
\end{theorem}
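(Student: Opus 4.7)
The plan is to combine Theorem \ref{htpy_category_of_monoidal_cofcat} with Corollary \ref{Quillenbifunctorinducesbiexact}. For the symmetric monoidal triangulated structure and the strong Verdier structure on $\Ho(\otimes)$, I would first observe that the category of cofibrant objects $\cat{M}_c$, together with the restriction of $\otimes$ and a cofibrant replacement $Q\unit$ of the unit, is a symmetric monoidal stable cofibration category. Indeed, the pushout-product axiom (via Lemma \ref{Quillenbifunctorrestricts}) gives the biexactness of $\otimes\colon \cat{M}_c\times \cat{M}_c\to \cat{M}_c$, while the unit axiom of $\cat{M}$ ensures that the left and right unitors restricted to $\cat{M}_c$ become weak equivalences. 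Theorem \ref{htpy_category_of_monoidal_cofcat} then upgrades $(\Ho(\cat{M}_c),\Ho(\otimes),\gamma(Q\unit))$ to a symmetric monoidal triangulated category whose monoidal product admits a strong Verdier structure. Transporting along the canonical equivalence $\Ho(\cat{M}_c)\xrightarrow{\simeq}\Ho(\cat{M})$, which sends $\gamma(Q\unit)$ to an object isomorphic to $\gamma(\unit)$, endows $(\Ho(\cat{M}),\Ho(\otimes),\gamma(\unit))$ with the desired structure.

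For closedness and the internal hom, I would invoke Corollary \ref{Quillenbifunctorinducesbiexact} with $\cat{L}=\cat{M}=\cat{N}$. Since $\otimes$ is a Quillen bifunctor between stable model categories, the cyclic shifts from Remark \ref{cyclicshifts} are again Quillen bifunctors, and hence the total derived right adjoints $\operatorname{Rhom}^\ell$ and $\operatorname{Rhom}^r$ are biexact and admit strong Verdier structures. In the symmetric situation the adjunction of two variables forces $\operatorname{Rhom}^\ell$ and $\operatorname{Rhom}^r$ to be naturally isomorphic, yielding a single internal hom bifunctor $\operatorname{Rhom}\colon \op{\Ho(\cat{M})}\times \Ho(\cat{M})\to \Ho(\cat{M})$. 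Its biexactness, in particular exactness in the first variable coming from the $\op{\Ho(\cat{M})}$-factor, verifies that $(\Ho(\cat{M}),\Ho(\otimes),\gamma(\unit))$ is closed in the sense of \ref{symmetricmonoidalclosed}, and simultaneously provides the claimed strong Verdier structure on the internal hom.

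The main potential obstacle is mere bookkeeping: one must check that the symmetric monoidal coherences and the biexact structure produced on $\Ho(\cat{M})$ via the cofibration category $\cat{M}_c$ agree with those coming from the Quillen bifunctor formalism. This reduces to the observation that both structures ultimately descend from the same biexact restriction of $\otimes$ to $\cat{M}_c$ via Lemma \ref{Quillenbifunctorrestricts}, so that the strong Verdier diagram of \ref{verdier_structure} and the exactness natural isomorphisms $\theta$, $\zeta$ constructed in the two approaches coincide up to canonical natural isomorphism. Notably, this route circumvents any appeal to \cite[Conjecture~5.7.5]{Hovey:1999}.
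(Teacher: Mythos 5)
Your proposal is correct and follows essentially the same approach as the paper: apply \cref{htpy_category_of_monoidal_cofcat} to the symmetric monoidal stable cofibration category $\cat{M}_c$ (the example preceding that theorem establishes that $\cat{M}_c$ with the restricted $\otimes$ and a cofibrant replacement of $\unit$ has this structure) to get the tensor triangulated structure and the strong Verdier structure on $\Ho(\otimes)$, and invoke \cref{Quillenbifunctorinducesbiexact} for the internal hom. Your extra remarks—spelling out that the unit axiom supplies the weak equivalences $\lambda,\rho$, transporting along $\Ho(\cat{M}_c)\simeq\Ho(\cat{M})$, and identifying $\operatorname{Rhom}^\ell$ with $\operatorname{Rhom}^r$ via symmetry—are details the paper leaves implicit but are exactly what is needed; the compatibility concern you raise resolves for the reason you give, since both constructions descend from the same restriction of $\otimes$ to $\cat{M}_c$.
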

\begin{proof}
By \cref{htpy_category_of_monoidal_cofcat}, $(\Ho(\cat{M}),\Ho(\otimes),\gamma(\unit))$ is a tensor triangulated category and $\Ho(\otimes)$ admits a strong Verdier structure. Since $\otimes$ is a Quillen bifunctor, the corresponding internal hom functor on $\cat{M}$ induces a biexact functor admitting a strong Verdier structure on $\Ho(\cat{M})$ by \cref{Quillenbifunctorinducesbiexact}. Hence the tensor triangulated category $(\Ho(\cat{M}),\Ho(\otimes),\gamma(\unit))$ is closed.
\end{proof}

The existence of a strong Verdier structure for $\Ho(\otimes)$ is known by the analogous result for monoidal stable derivators \cite[Theorem 6.2, Lemma~6.8]{Groth/Ponto/Shulman:2014} as any symmetric monoidal stable model category $\cat{M}$ has an associated monoidal stable derivator. If in addition $\cat{M}$ is cofibrantly generated, then the associated monoidal derivator is closed; see \cite[Theorem~9.13]{Groth/Ponto/Shulman:2014}. In this case, the internal hom functor admits a strong Verdier structure by the analogous result for derivators \cite[Proposition~4.1.9]{Jin/Yang:2021}.

\section{Examples} \label{sec:examples}

In the following we discuss examples of bifunctors that admit a strong Verdier structure so that \cref{biexact_level} applies. Many of these functors are monoidal products of a tensor triangulated category or actions of a tensor triangulated category. Any action of a tensor triangulated category induces elements in the center so that \cref{kos_obj_level} holds for the corresponding Koszul objects.

\subsection{Tensor triangulated categories} \label{tt_cats}

Recall from \cref{monoidal_triangulated} and \cref{symmetricmonoidalclosed} that a closed tensor triangulated category $(\cat{T},\otimes,\unit)$ consists of a triangulated category $\cat{T}$ and a compatible closed symmetric monoidal product $\otimes$ with unit $\unit$. In particular, the monoidal product $\otimes$ and the internal hom are biexact functors in the sense of \cref{biexact}. 

\begin{discussion}\label{examples_ttc}
Each of the following tensor triangulated categories is the homotopy category of a symmetric monoidal stable model category. In particular, \cref{biexact_level} holds for the monoidal product and the internal hom functor, and \cref{kos_obj_level} for elements induced by endomorphisms of the unit. 
\begin{enumerate}[leftmargin=*]
\item The derived category $\dcat{\Mod{R}}$ of modules over a commutative ring $R$ with monoidal product $\lotimes_R$ and unit $R$. The endomorphism ring is $\End[*]{}{R} = R$. A corresponding monoidal model category is the category $\Ch{R}$ of unbounded chain complexes over $R$ with the projective model structure; see \cite[Proposition 4.2.13]{Hovey:1999}.

\item The derived category $\dcat{\Mod{RG}}$ for a commutative ring $R$ and a finite group $G$ with monoidal product $\lotimes_R$ and unit $R$. The endomorphism ring is $\End[*]{}{R} = \operatorname{H}^*(G;R)$, the group cohomology ring. A corresponding model category is the category of unbounded chain complexes $\Ch{RG}$ with the projective model structure. The monoidal product on $\Ch{RG}$ is $\otimes_R$ with the diagonal $G$-action. There is a natural isomorphism
\[
\Ch{RG}(X\otimes_R Y, Z)\cong \Ch{R}(X,\hom[R]{Y}{Z}) ,
\]
where $\hom[R]{Y}{Z}$ is the hom-complex over $R$ equipped with the conjugation action. The forgetful functor $\Ch{RG}\to \Ch{R}$ is left Quillen and in particular preserves cofibrations. By the definition of the projective model structure, a morphism in $\Ch{RG}$ is a fibration or weak equivalence if and only if it is so in $\Ch{R}$. Since $\hom[R]{-}{-}$ on $\Ch{R}$ is a Quillen bifunctor it now follows that $\hom[R]{-}{-}$ on $\Ch{RG}$ is a Quillen bifunctor and hence so is $\otimes_R$. The unit axiom can be deduced from the unit axiom for $\Ch{R}$ as well.

\item The stable module category $\Stmod{H}$ of a finite dimensional cocommutative Hopf algebra $H$ over a field $k$ with monoidal product $\otimes_k$ and unit $k$; see \cite[Proposition~4.2.15]{Hovey:1999}. This includes in particular the stable module category $\Stmod{kG}$ of a group algebra $kG$ of finite group $G$. For a group algebra, the endomorphism ring is $\End[*]{}{k} = \hat{\operatorname{H}}^*(G,k)$, the Tate cohomology ring. 

\item The homotopy category of spectra in stable homotopy theory; see for example \cite{Mandell/May/Schwede/Shipley:2001}. 

\item The homotopy category of equivariant spectra; see \cite{Mandell/May:2002}.

\item The motivic stable category in $\mathbb{A}^1$-homotopy theory; see \cite{Jardine:2000}.
\end{enumerate}
\end{discussion}

\begin{discussion}
We obtain many examples for actions of tensor triangulated categories that admit a strong Verdier structure by restricting the monoidal structures from \cref{examples_ttc} to suitable subcategories.
\begin{enumerate}[leftmargin=*]
\item Let $R$ be a commutative noetherian ring. Then $\Perf{R}$, the full subcategory of bounded complexes of finitely generated projective modules of $\dcat{\Mod{R}}$, is a tensor triangulated category with the restricted monoidal product. Further, it acts on $\dbcat{\mod{R}}$, the bounded derived category of finitely generated modules. 

\item Let $R$ be a commutative noetherian ring and $G$ a finite group. Then $\Perf{RG}$ inherits the monoidal structure from $\dcat{\Mod{RG}}$ and acts on $\dbcat{\mod{RG}}$; cf.\@ \cite[Section~8]{Buan/Krause/Snashall/Solberg:2020}.
\end{enumerate}
\end{discussion}

\subsection{Tensor products of bimodules} \label{bifunctor_bimodules}

Under reasonable conditions a stable model structure on a monoidal category transfers to stable model structures on categories of bimodules. We will show in \ref{tensorinducesbiexactwithstrongverdier} that tensoring over a monoid induces a biexact functor on homotopy categories that admits a strong Verdier structure. Moreover, the derived tensor product is part of an adjunction of two variables and the adjoints are biexact functors that admit a strong Verdier structure as well. 

\begin{discussion}\label{bicategoryofmonoids}
Let $(\cat{C},\otimes,\unit)$ be a biclosed monoidal category, that is $\otimes$ has a right adjoint in each variable denoted $\operatorname{hom}^\ell$ and $\operatorname{hom}^r$. We further assume that equalizers and coequalizers exist. We briefly recall basic constructions for bimodules; for more details see \cite{Barr:1996}.

For monoids $A$ and $B$ in $\cat{M}$, we write $\Bimod{A}{B}$ for the category of $A$-$B$-bimodules. If the monoidal structure on $\cat{C}$ is symmetric, then $\Bimod{A}{B}$ is isomorphic to the category of left $A\otimes \op{B}$-modules. The category of left $A$-modules $\Mod{A}$ can be identified with $\Bimod{A}{\unit}$ where $\unit$ is the trivial monoid. 

We write $X\otimes_B Y$ for the tensor product of a right $B$-module $X$ with a left $B$-module $Y$ defined by the coequalizer of $X\otimes B\otimes Y\rightrightarrows X\otimes Y$. Since $\otimes$ preserves coequalizers in each variable, we obtain a bifunctor
\[
- \otimes_B - \colon \Bimod{A}{B}\otimes\Bimod{B}{C}\to\Bimod{B}{C} ,
\]
where $C$ is a monoid in $\cat{C}$ as well.

If $X$ is an $A$-$B$-bimodule and $Z$ is an $A$-$C$-bimodule, then $\homl[A]{X}{Z}$ is a $B$-$C$-bimodule with $B$-action
\[
B\otimes\homl[A]{X}{Z}\to \homl[A]{X}{Z}
\]
induced by the adjoint transpose of
\[
X\otimes B\otimes \homl[A]{X}{Z}\to X\otimes B\otimes \homl{X}{Z}\to X\otimes \homl{X}{Z}\to Z ,
\]
and $C$-action 
\[
\homl[A]{X}{Z}\otimes C\to \homl[A]{X}{Z}
\]
induced by the adjoint transpose of
\[
X\otimes \homl[A]{X}{Z}\otimes C\to X\otimes \homl{X}{Z}\otimes C\to Z\otimes C\to Z .
\]

We obtain a bifunctor
\[
\homl[A]{-}{-}\colon \op{\Bimod{A}{B}}\times \Bimod{A}{C}\to \Bimod{B}{C} .
\]
Similarly, we have a bifunctor
\[
\homr[C]{-}{-}\colon \op{\Bimod{B}{C}}\times \Bimod{A}{C}\to \Bimod{A}{B} .
\]

The tensor product over a monoid is a left adjoint in each variable. Explicitly, there are natural isomorphisms
\begin{align*}
\Bimod{B}{C}(Y,\homl[A]{X}{Z}) &\cong \Bimod{A}{C}(X\otimes_B Y, Z) \\
&\cong \Bimod{A}{B}(X,\homr[C]{Y}{Z})
\end{align*}
for $X\in \Bimod{A}{B}$, $Y\in \Bimod{B}{C}$ and $Z\in \Bimod{A}{C}$ defined as follows. For a morphism of bimodules $Y\to \homl[A]{X}{Z}$, the adjoint transpose of
\[
Y\to \homl[A]{X}{Z}\to \homl{X}{Z}
\]
induces a morphism $X\otimes_B Y\to Z$ of $A$-$C$-bimodules. It is straightforward to check that this assignment defines a natural isomorphism. Analogously, one defines the second natural isomorphism.

In addition to the monoids $A$, $B$, and $C$, consider a monoid $D$. There are natural isomorphisms
\begin{equation*}
X \otimes_B (Y \otimes_C Z) \to (X \otimes_B Y) \otimes_C Z  , \quad A \otimes_A X \to X \quad \text{and} \quad X \otimes_B B \to X
\end{equation*}
for any bimodules $X \in \Bimod{A}{B}$, $Y \in \Bimod{B}{C}$, and $Z \in \Bimod{C}{D}$. The associativity isomorphism is induced by the associativity of $\cat{C}$ using that $\otimes$ preserves coequalizers in each variable. The unit isomorphisms are induced by the structure maps of $X$ as an $A$-module and as a $B$-module, respectively.

This equips the monoids in $\cat{C}$, bimodules, and morphisms of bimodules with the structure of a biclosed bicategory; see \cite{Barr:1996}. In particular, $\Bimod{A}{A}$ is a biclosed monoidal category with monoidal product $\otimes_A$ and unit $A$.
\end{discussion}

\begin{discussion}
Let $(\cat{M},\otimes,\unit)$ be a cofibrantly generated monoidal model category. In particular, the monoidal product $\otimes\colon \cat{M}\times\cat{M}\to \cat{M}$ is a Quillen bifunctor and thus part of an adjunction of two variables $(\otimes,\operatorname{hom}^\ell,\operatorname{hom}^r)$. 

We say the cofibrantly generated model structure on $\cat{M}$ \emph{(right) transfers} to the category of bimodules $\Bimod{A}{B}$, if $\Bimod{A}{B}$ is a cofibrantly generated model category with generating (trivial) cofibrations $A\otimes i\otimes B$ for generating (trivial) cofibrations $i$ of $\cat{M}$. By adjunction, this implies that the weak equivalences and fibrations of $\Bimod{A}{B}$ are the bimodule morphisms whose underlying morphisms in $\cat{M}$ are weak equivalences and fibrations, respectively. For sufficient conditions and examples when $\cat{M}$ is symmetric monoidal see \cite[Theorem~4.1 and Section~5]{Schwede/Shipley:2000}. 
\end{discussion}

\begin{lemma}\label{tensorisQuillenbifunctor}
Suppose the cofibrantly generated model structure on $\cat{M}$ transfers to the category $\Bimod{A}{B}$ for any monoids $A$ and $B$. Then
\[
- \otimes_B - \colon \Bimod{A}{B} \times \Bimod{B}{C} \to \Bimod{A}{C}
\]
is a Quillen bifunctor for any monoids $A$, $B$ and $C$ in $\cat{M}$.
\end{lemma}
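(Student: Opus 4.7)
The plan is to verify the two defining properties of a Quillen bifunctor: the existence of a right adjoint in each variable and the pushout-product axiom. The right adjoints $\homl[A]{-}{-}$ and $\homr[C]{-}{-}$ were already constructed in the preceding bicategorical discussion, so the real work is to check the pushout-product axiom.

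For this I would first invoke the standard reduction for cofibrantly generated model categories (cf.\ \cite[Corollary~4.2.5]{Hovey:1999}): since $-\otimes_B-$ preserves colimits in each variable (each $\otimes_B$ has right adjoints), it suffices to verify the pushout-product axiom on generating (trivial) cofibrations. By the transfer hypothesis the generating (trivial) cofibrations of $\Bimod{A}{B}$ are of the form $A\otimes i\otimes B$ for $i$ a generating (trivial) cofibration of $\cat{M}$, and similarly for $\Bimod{B}{C}$ and $\Bimod{A}{C}$.

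The key computation is then purely formal: for morphisms $i\colon K\to L$ and $j\colon K'\to L'$ in $\cat{M}$, the unit isomorphism $X\otimes_B B\cong X$ together with associativity gives natural identifications
\begin{equation*}
(A\otimes X\otimes B)\otimes_B (B\otimes Y\otimes C)\cong A\otimes X\otimes Y\otimes C
\end{equation*}
for any $X,Y\in \cat{M}$. Using that $A\otimes-\otimes C$ preserves pushouts (as a left adjoint), these identifications assemble into a natural isomorphism
\begin{equation*}
(A\otimes i\otimes B)\mathbin{\Box_B}(B\otimes j\otimes C)\cong A\otimes (i\mathbin{\Box} j)\otimes C,
\end{equation*}
where $\Box_B$ and $\Box$ denote the pushout-product with respect to $\otimes_B$ and $\otimes$, respectively. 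Since $\otimes$ is a Quillen bifunctor on $\cat{M}$, the morphism $i\mathbin{\Box}j$ is a cofibration in $\cat{M}$ and is trivial as soon as either $i$ or $j$ is. Finally, the functor $A\otimes-\otimes C\colon \cat{M}\to \Bimod{A}{C}$ sends generating (trivial) cofibrations of $\cat{M}$ to generating (trivial) cofibrations of $\Bimod{A}{C}$ by the transfer assumption, so it is a left Quillen functor and preserves all (trivial) cofibrations. This establishes the pushout-product axiom on generators.

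The main obstacle is not conceptual but rather the bookkeeping of identifying the pushout-product of generators; this amounts to a careful application of associativity, the unit isomorphisms in the bicategory of bimodules, and the fact that $-\otimes_B-$ commutes with colimits in each variable. Everything else is formal from Quillen bifunctor theory for $(\cat{M},\otimes,\unit)$ and the transfer.
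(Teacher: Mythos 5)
Your proof follows the same strategy as the paper: invoke \cite[Corollary~4.2.5]{Hovey:1999} to reduce to generating (trivial) cofibrations, identify the pushout-product of $A\otimes i\otimes B$ and $B\otimes j\otimes C$ with $A\otimes (i\mathbin{\Box} j)\otimes C$, and conclude using that $A\otimes-\otimes C$ is a left Quillen functor to $\Bimod{A}{C}$. Your write-up supplies a bit more detail on the bicategorical bookkeeping, but the argument is the same.
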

\begin{proof} 
The bifunctor $-\otimes_B -$ is part of an adjunction of two variables by \cref{bicategoryofmonoids}. The pushout-product axiom can be checked on generating cofibrations and generating trivial cofibrations by \cite[Corollary~4.2.5]{Hovey:1999}. In our situation this holds, because for any morphisms $f$, $g$ in $\cat{M}$, the pushout-product of $A\otimes f \otimes B$ and $B\otimes g\otimes C$ is the pushout-product of $f$ and $g$ tensored by $A$ from the left and by $C$ from the right, and the functor $A\otimes - \otimes C$ is a left Quillen functor from $\cat{M}$ to $\Bimod{A}{C}$.
\end{proof}

\begin{proposition}\label{tensorinducesbiexactwithstrongverdier}
Let $(\cat{M},\otimes,\unit)$ be a monoidal stable model category. Suppose the model structure on $\cat{M}$ is cofibrantly generated and transfers to $\Bimod{A}{B}$ for any monoids $A$ and $B$ such that $\Bimod{A}{B}$ is a stable model category. Then the total derived functors
\begin{gather*}
- \lotimes_B - \colon \Ho(\Bimod{A}{B}) \times \Ho(\Bimod{B}{C}) \to \Ho(\Bimod{A}{C})  ,\\
\Rhomr[C]{-}{-} \colon \op{\Ho(\Bimod{B}{C})} \times \Ho(\Bimod{A}{C}) \to \Ho(\Bimod{A}{B}) , \\
\Rhoml[A]{-}{-} \colon \op{\Ho(\Bimod{A}{B})} \times \Ho(\Bimod{A}{C})\to \Ho(\Bimod{B}{C})
\end{gather*}
are biexact and admit a strong Verdier structure.
\end{proposition}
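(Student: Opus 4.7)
The plan is to apply the machinery already developed in the preceding two subsections. The statement is essentially the composite of \cref{tensorisQuillenbifunctor} with \cref{Quillenbifunctorinducesbiexact}, so only a small amount of packaging is needed.

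First, I would invoke \cref{tensorisQuillenbifunctor} with the monoids $A$, $B$, $C$ in place to conclude that
\begin{equation*}
- \otimes_B - \colon \Bimod{A}{B} \times \Bimod{B}{C} \to \Bimod{A}{C}
\end{equation*}
is a Quillen bifunctor. The hypothesis that the cofibrantly generated model structure on $\cat{M}$ transfers to all bimodule categories is exactly what \cref{tensorisQuillenbifunctor} requires. Next, the assumption guarantees that each of the three bimodule categories $\Bimod{A}{B}$, $\Bimod{B}{C}$, and $\Bimod{A}{C}$ carries a stable model structure.

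Then I would apply \cref{Quillenbifunctorinducesbiexact}, which handles all three derived functors in one stroke: it asserts precisely that, for any Quillen bifunctor between stable model categories, the total left derived functor is biexact and admits a strong Verdier structure, and likewise for the two associated derived internal hom functors arising from the cyclic shifts as in \cref{cyclicshifts}. Identifying the cyclic shifts of $- \otimes_B -$ with the bifunctors $\homr[C]{-}{-}$ and $\homl[A]{-}{-}$ from \cref{bicategoryofmonoids}, and noting that these are also Quillen bifunctors with values in the stable model categories $\Bimod{A}{B}$ and $\Bimod{B}{C}$ respectively, yields the claim for $\Rhomr[C]{-}{-}$ and $\Rhoml[A]{-}{-}$.

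There is no real obstacle here; the only thing to verify carefully is the matching of adjoints for the cyclic shifts between $\cat{M}$ and the bimodule categories, which is just a bookkeeping exercise with the three-variable adjunction worked out in \cref{bicategoryofmonoids}. Everything else is an immediate appeal to previously established results.
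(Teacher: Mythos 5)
Your proposal is correct and follows essentially the same route as the paper: invoke \cref{tensorisQuillenbifunctor} to see that $-\otimes_B-$ is a Quillen bifunctor between stable model categories (stability coming from the hypothesis on the transferred model structures), then apply \cref{Quillenbifunctorinducesbiexact} for the derived tensor and the two derived hom functors, with the identification of the cyclic shifts against the bimodule hom functors of \cref{bicategoryofmonoids} supplying the needed bookkeeping. The paper phrases the final step slightly more explicitly, passing through the cyclic shifts and their opposites, but this is the same content your ``bookkeeping exercise'' covers.
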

\begin{proof}
Since $-\otimes_B -$ is a Quillen bifunctor by \ref{tensorisQuillenbifunctor}, its total left derived functor is biexact and admits a strong Verdier structure by \cref{Quillenbifunctorinducesbiexact}. The functors $\homr[C]{-}{-}$ and $\homl[A]{-}{-}$ are up to switching arguments in the case of $\homl[A]{-}{-}$ the opposite functors of the cyclic shifts of $-\otimes_B -$; see \cref{cyclicshifts}. Since the the cyclic shifts are Quillen bifunctors, their total left derived functors are biexact and admit a strong Verdier structure, hence so do the opposites of these left derived functors, that is $\Rhomr[C]{-}{-}$ and $\Rhoml[A]{-}{-}$.
\end{proof}

\begin{example} \label{bimod_dg_algebra}
Let $\cat{M} = \Ch{R}$ be the model category of unbounded chain complexes over a commutative ring $R$ equipped with the projective model structure in which the weak equivalences are the quasi-isomorphisms and the fibrations are the degreewise epimorphisms; see \cite[Theorem~2.3.11]{Hovey:1999}. A monoid $A$ in $\Ch{R}$ is a differential graded $R$-algebra and the category of $A$-modules is the category of differential graded modules over $A$. The model structure of $\Ch{R}$ transfers to $\Mod{A}$; see \cite[Lemma~2.3]{Schwede/Shipley:2000}. Since every object of $\Ch{R}$ is fibrant, so is every object of $\Mod{A}$. To check that the model structure on $\Mod{A}$ is stable, we consider the loop space functor $\Omega$ instead of the suspension. The loop space functor is the suspension in $\op{\Mod{A}}$ and for a stable model category it yields the inverse of the suspension functor in the homotopy category. We set $D^{1} = \cone(\id_R)$, the chain complex with $R$ in degrees one and zero and the identity as differential. Let $X$ be an $A$-module. Then 
\[
\homr[R]{D^1}{X}\to \homr[R]{R}{X}\cong X
\]
is a fibration and $\homr[R]{D^1}{X}$ is weakly contractible. Its fiber $\Omega(X)$, that is the pullback along $0 \to R$, is a shift of $X$. Thus $\Omega$ is an equivalence on $\Ho(\Mod{A})$. The homotopy category of $\Mod{A}$ is the derived category $\dcat{A}$ of differential graded modules over $A$.

Let $A$, $B$, $C$ be differential graded algebras over a commutative ring $R$. The derived functors
\begin{gather*}
- \lotimes_B - \colon \dcat{A \otimes \op{B}} \times \dcat{B \otimes \op{C}} \to \dcat{A \otimes \op{C}}  ,\\
\RHom{\op{C}}{-}{-} \colon \op{(\dcat{B \otimes \op{C}})} \times \dcat{A \otimes \op{C}} \to \dcat{A \otimes \op{B}}\quad \text{and} \\
\RHom{A}{-}{-} \colon \op{(\dcat{A \otimes \op{B}})} \times \dcat{A \otimes \op{C}} \to \dcat{B \otimes \op{C}} .
\end{gather*}
are biexact and admit a strong Verdier structure.
\end{example}

\begin{example} \label{sec:hochschild}
Let $R$ be a commutative ring and $S$ an associative algebra over $R$. We denote by $\env[R]{S} \coloneqq S \lotimes_R \op{S}$ the \emph{derived enveloping algebra} of $S$ with respect to $R$. By choosing a projective differential graded algebra resolution $A \xrightarrow{\sim} S$ over $R$, we can view $A \otimes_R \op{A} \xrightarrow{\sim} \env[R]{S}$ as a differential graded algebra. 

By \cref{bimod_dg_algebra}, we have a monoidal triangulated category $\dcat{\Mod{\env[R]{S}}}$ with $\lotimes_S$ and unit $S$ acting on $\dcat{\Mod{S}}$ via $\lotimes_S$. The endomorphism ring of the unit $S$ is
\begin{equation*}
\Ext{\env[R]{S}}{S}{S} \eqqcolon \HH{S/R} ,
\end{equation*}
the Shukla cohomology of $S$ over $R$. When $R = k$ a field, then this coincides with Hochschild cohomology. In particular, \cref{kos_obj_level} holds for any sequence in $\HH{S/R}$.
\end{example}

\bibliographystyle{amsalpha}
\bibliography{references}

\providecommand{\bysame}{\leavevmode\hbox to3em{\hrulefill}\thinspace}
\providecommand{\MR}{\relax\ifhmode\unskip\space\fi MR }
\providecommand{\MRhref}[2]{%
  \href{http://www.ams.org/mathscinet-getitem?mr=#1}{#2}
}
\providecommand{\href}[2]{#2}
\begin{thebibliography}{MMSS01}

\bibitem[ABIM10]{Avramov/Buchweitz/Iyengar/Miller:2010}
Luchezar~L. Avramov, Ragnar-Olaf Buchweitz, Srikanth~B. Iyengar, and Claudia
  Miller, \emph{Homology of perfect complexes}, Adv. Math. \textbf{223} (2010),
  no.~5, 1731--1781. \MR{2592508}

\bibitem[AIN18]{Avramov/Iyengar/Neeman:2018}
Luchezar~L. Avramov, Srikanth~B. Iyengar, and Amnon Neeman, \emph{Big
  {C}ohen--{M}acaulay modules, morphisms of perfect complexes, and intersection
  theorems in local algebra}, Doc. Math. \textbf{23} (2018), 1601--1619.
  \MR{3890961}

\bibitem[AL23]{Aldrovandi/Lester:2023}
Ettore Aldrovandi and Cynthia Lester, \emph{On multi-determinant functors for
  triangulated categories}, Theory Appl. Categ. \textbf{39} (2023), 769--803.
  \MR{4640338}

\bibitem[Bar96]{Barr:1996}
Michael Barr, \emph{The {C}hu construction}, Theory Appl. Categ. \textbf{2}
  (1996), No. 2, 17--35. \MR{1393936}

\bibitem[BF08]{Buchweitz/Flenner:2008}
Ragnar-Olaf Buchweitz and Hubert Flenner, \emph{Global {H}ochschild
  (co-)homology of singular spaces}, Adv. Math. \textbf{217} (2008), no.~1,
  205--242. \MR{2357326}

\bibitem[BH98]{Bruns/Herzog:1998}
Winfried Bruns and J{\"u}rgen Herzog, \emph{{C}ohen--{M}acaulay rings}, revised
  ed., Cambridge Stud. Adv. Math., vol.~39, Cambridge Univ. Press, Cambridge,
  1998. \MR{1251956}

\bibitem[BIK08]{Benson/Iyengar/Krause:2008}
Dave Benson, Srikanth~B. Iyengar, and Henning Krause, \emph{Local cohomology
  and support for triangulated categories}, Ann. Sci. {\'E}c. Norm. Sup{\'e}r.
  (4) \textbf{41} (2008), no.~4, 573--619. \MR{2489634}

\bibitem[BIKO10]{Bergh/Iyengar/Krause/Oppermann:2010}
Petter~Andreas Bergh, Srikanth~B. Iyengar, Henning Krause, and Steffen
  Oppermann, \emph{Dimensions of triangulated categories via {K}oszul objects},
  Math. Z. \textbf{265} (2010), no.~4, 849--864. \MR{2652539}

\bibitem[BKSS20]{Buan/Krause/Snashall/Solberg:2020}
Aslak~Bakke Buan, Henning Krause, Nicole Snashall, and {\O}yvind Solberg,
  \emph{Support varieties---an axiomatic approach}, Math. Z. \textbf{295}
  (2020), no.~1-2, 395--426. \MR{4100008}

\bibitem[Bro73]{Brown:1973}
Kenneth~S. Brown, \emph{Abstract homotopy theory and generalized sheaf
  cohomology}, Trans. Amer. Math. Soc. \textbf{186} (1973), 419--458.
  \MR{341469}

\bibitem[BvdB03]{Bondal/VanDenBergh:2003}
Alexey~I. Bondal and Michel van~den Bergh, \emph{Generators and
  representability of functors in commutative and noncommutative geometry},
  Mosc. Math. J. \textbf{3} (2003), no.~1, 1--36, 258. \MR{1996800}

\bibitem[CF22]{Christensen/Frankland:2022}
J.~Daniel Christensen and Martin Frankland, \emph{On good morphisms of exact
  triangles}, J. Pure Appl. Algebra \textbf{226} (2022), no.~3, Paper No.
  106846, 33. \MR{4289720}

\bibitem[Chr98]{Christensen:1998}
J.~Daniel Christensen, \emph{Ideals in triangulated categories: phantoms,
  ghosts and skeleta}, Adv. Math. \textbf{136} (1998), no.~2, 284--339.
  \MR{1626856}

\bibitem[Cis08]{Cisinski:2008}
Denis-Charles Cisinski, \emph{Propri\'{e}t\'{e}s universelles et extensions de
  {K}an d\'{e}riv\'{e}es}, Theory Appl. Categ. \textbf{20} (2008), No. 17,
  605--649. \MR{2534209}

\bibitem[GPS14a]{Groth/Ponto/Shulman:2014}
Moritz Groth, Kate Ponto, and Michael Shulman, \emph{The additivity of traces
  in monoidal derivators}, J. K-Theory \textbf{14} (2014), no.~3, 422--494.
  \MR{3349323}

\bibitem[GPS14b]{Groth/Ponto/Shulman:2014a}
\bysame, \emph{Mayer-{V}ietoris sequences in stable derivators}, Homology
  Homotopy Appl. \textbf{16} (2014), no.~1, 265--294. \MR{3211746}

\bibitem[G{\v{S}}18]{Groth/Stovicek:2018}
Moritz Groth and Jan {\v{S}}{\v{t}}ov{\'\i}{\v{c}}ek, \emph{Tilting theory via
  stable homotopy theory}, J. Reine Angew. Math. \textbf{743} (2018), 29--90.
  \MR{3859269}

\bibitem[GZ67]{Gabriel/Zisman:1967}
Peter Gabriel and Michel Zisman, \emph{Calculus of fractions and homotopy
  theory}, Springer-Verlag New York, Inc., New York,, 1967. \MR{210125}

\bibitem[Hov99]{Hovey:1999}
Mark Hovey, \emph{Model categories}, Mathematical Surveys and Monographs,
  vol.~63, American Mathematical Society, Providence, RI, 1999. \MR{1650134}

\bibitem[HPS97]{Hovey/Palmieri/Strickland:1997}
Mark Hovey, John~H. Palmieri, and Neil~P. Strickland, \emph{Axiomatic stable
  homotopy theory}, vol. 128, Mem. Amer. Math. Soc., no. 610, Amer.Math. Soc.,
  Providence, RI., 1997. \MR{1388895}

\bibitem[Jar00]{Jardine:2000}
J.~F. Jardine, \emph{Motivic symmetric spectra}, Doc. Math. \textbf{5} (2000),
  445--552. \MR{1787949}

\bibitem[JY21]{Jin/Yang:2021}
Fangzhou Jin and Enlin Yang, \emph{K\"{u}nneth formulas for motives and
  additivity of traces}, Adv. Math. \textbf{376} (2021), Paper No. 107446, 83.
  \MR{4178918}

\bibitem[Kel64]{Kelly:1964}
G.~Maxwell Kelly, \emph{On {M}ac{L}ane's conditions for coherence of natural
  associativities, commutativities, etc}, J. Algebra \textbf{1} (1964),
  397--402. \MR{0182649}

\bibitem[KN02]{Keller/Neeman:2002}
Bernhard Keller and Amnon Neeman, \emph{The connection between {M}ay's axioms
  for a triangulated tensor product and {H}appel's description of the derived
  category of the quiver {$D_4$}}, Doc. Math. \textbf{7} (2002), 535--560.
  \MR{2015053}

\bibitem[KS06]{Kashiwara/Schapira:2006}
Masaki Kashiwara and Pierre Schapira, \emph{Categories and sheaves},
  Grundlehren Math. Wiss., vol. 332, Springer-Verlag, Berlin, 2006.
  \MR{2182076}

\bibitem[KY11]{Krause/Ye:2011}
Henning Krause and Yu~Ye, \emph{On the centre of a triangulated category},
  Proc. Edinb. Math. Soc. (2) \textbf{54} (2011), no.~2, 443--466. \MR{2794666}

\bibitem[Let23]{Letz:2023}
Janina~C. Letz, \emph{Brown representability for triangulated categories with a
  linear action by a graded ring}, Arch. Math. (Basel) \textbf{120} (2023),
  no.~2, 135--146.

\bibitem[May01]{May:2001}
J.~Peter May, \emph{The additivity of traces in triangulated categories}, Adv.
  Math. \textbf{163} (2001), no.~1, 34--73. \MR{1867203}

\bibitem[ML98]{MacLane:1998}
Saunders Mac~Lane, \emph{Categories for the working mathematician}, second ed.,
  Grad. Texts in Math., vol.~5, Springer-Verlag, New York, 1998. \MR{1712872}

\bibitem[MM02]{Mandell/May:2002}
M.~A. Mandell and J.~P. May, \emph{Equivariant orthogonal spectra and
  {$S$}-modules}, Mem. Amer. Math. Soc. \textbf{159} (2002), no.~755, x+108.
  \MR{1922205}

\bibitem[MMSS01]{Mandell/May/Schwede/Shipley:2001}
M.~A. Mandell, J.~P. May, S.~Schwede, and B.~Shipley, \emph{Model categories of
  diagram spectra}, Proc. London Math. Soc. (3) \textbf{82} (2001), no.~2,
  441--512. \MR{1806878}

\bibitem[Nee91]{Neeman:1991}
Amnon Neeman, \emph{Some new axioms for triangulated categories}, J. Algebra
  \textbf{139} (1991), no.~1, 221--255. \MR{1106349}

\bibitem[Nee01]{Neeman:2001}
\bysame, \emph{Triangulated categories}, Ann. of Math. Stud., vol. 148,
  Princeton University Press, Princeton, NJ, 2001. \MR{1812507}

\bibitem[RB09]{RadulescuBanu:2009}
Andrei Radulescu-Banu, \emph{Cofibrations in homotopy theory}, arXiv e-prints,
  2009, arXiv:math/0610009v4, pp.~1--160.

\bibitem[Rou08]{Rouquier:2008}
Rapha{\"e}l Rouquier, \emph{Dimensions of triangulated categories}, J. K-Theory
  \textbf{1} (2008), no.~2, 193--256. \MR{2434186}

\bibitem[Sch13]{Schwede:2013}
Stefan Schwede, \emph{The {$p$}-order of topological triangulated categories},
  J. Topol. \textbf{6} (2013), no.~4, 868--914. \MR{3145143}

\bibitem[SS00]{Schwede/Shipley:2000}
Stefan Schwede and Brooke~E. Shipley, \emph{Algebras and modules in monoidal
  model categories}, Proc. Lond. Math. Soc. (3) \textbf{80} (2000), no.~2,
  491--511. \MR{1734325}

\bibitem[SZ19]{Saorin/Zimmermann:2019}
Manuel Saor{\'\i}n and Alexander Zimmermann, \emph{Symmetry of the definition
  of degeneration in triangulated categories}, Algebr. Represent. Theory
  \textbf{22} (2019), no.~4, 801--836. \MR{3985141}

\bibitem[Szu17]{Szumilo:2017}
Karol Szumi{\l}o, \emph{Homotopy theory of cocomplete quasicategories}, Algebr.
  Geom. Topol. \textbf{17} (2017), no.~2, 765--791. \MR{3623671}

\end{thebibliography}

\end{document}